\documentclass[pdflatex,sn-mathphys-num]{sn-jnl}

\usepackage[true]{anonymous-acm}

\usepackage[table,dvipsnames,svgnames,usenames]{xcolor}
\usepackage{latexsym, amsthm, amsmath, bbm}
\usepackage{thmtools}
\usepackage{thm-restate} 
\usepackage{amssymb}
\usepackage{amsfonts}
\usepackage{amstext}
\usepackage{multicol}
\usepackage{graphicx}
\usepackage{grffile}

\usepackage{subcaption}
\usepackage{hyperref}

\usepackage{multirow}
\usepackage[small]{complexity}

\usepackage[utf8]{inputenc}

\usepackage{amsthm} 
\usepackage{amsmath}  

\usepackage[noabbrev,nameinlink,capitalize]{cleveref}

\usepackage{appendix}
\usepackage{url}

\usepackage{setspace}
\usepackage[numbers]{natbib}

\usepackage{graphicx}
\graphicspath{ {./images/} }

\usepackage{tikz}
\usetikzlibrary{calc,hobby,backgrounds,shapes,arrows,quotes,shapes.geometric,positioning,patterns,external,decorations.pathreplacing,calligraphy}
\usepackage{tikz-qtree}

\definecolor{aqua}{RGB}{0, 255, 255}
\definecolor{pink}{RGB}{255, 0, 255}

\usepackage{subcaption}

\usepackage{afterpage}

\usepackage{xspace}

\newcommand{\Z}{{\mathbb{Z}}}

\newcommand{\T}{{\mathcal{T}}}
\newcommand{\I}{{\mathcal{I}}}

\newcommand{\LL}{\mathrm{L}}
\newcommand{\RR}{\mathrm{R}}

\newcommand{\ctriangle}{even triangle\xspace}
\newcommand{\ctriangles}{even triangles\xspace}
\newcommand{\Ctriangle}{Even triangle\xspace}

\newtheorem{Theorem}{Theorem}
\newtheorem{Lemma}{Lemma}

\newtheorem{Definition}{Definition}

\theoremstyle{remark}

\usepackage{algorithm}
\usepackage{algpseudocodex}
\newcommand{\Input}[1]{%
  \Statex \hspace{\algorithmicindent}\textbf{Input:} #1%
}

\usepackage{bm}
\usepackage{bbm}

\begin{document}

\title{Connectivity of Districting Metagraphs}
\author[1]{\fnm{Mehmet} \sur{Emre}}\email{memre@usfca.edu}
\equalcont{Authors listed alphabetically, per convention.}

\author*[2]{\fnm{Daniel C.} \sur{Jerison}}\email{dcjerison@usfca.edu}
\equalcont{Authors listed alphabetically, per convention.}   

\author[1]{\fnm{Ellen} \sur{Veomett}}\email{eveomett@usfca.edu}
\equalcont{Authors listed alphabetically, per convention.}

\affil[1]{\orgdiv{Department of Computer Science}, \orgname{University of San Francisco}}

\affil[2]{\orgdiv{Department of Mathematics and Statistics}, \orgname{University of San Francisco}}

\begin{titlepage}
    \maketitle 
    \vspace{2cm}

\bmhead{Author Contribution}

All three authors contributed equally to the paper.

\bmhead{Funding Declaration}

No funding was received to assist with the preparation of this manuscript.

\bmhead{Competing Interests}

The authors declare that they have no competing interests.

\bmhead{Code Availability}

We have made all code used to run the computer search publicly available at the following GitHub repository: \url{https://github.com/maemre/triforce}
    
    \vfill 
\end{titlepage}

\begin{abstract}

In this article, we prove irreducibility results for a family of Markov chains arising in the study of redistricting and detecting gerrymandering. These chains use ReCom moves as their transition mechanism and are commonly employed in Markov chain Monte Carlo methods to generate ensembles of districting plans. Such ensembles are frequently used for outlier analysis, in which a proposed districting map is compared against the ensemble to determine whether it behaves atypically; this methodology often appears in expert testimony in redistricting litigation.

We show that when the underlying dual graph is a triangular subset of the triangular lattice with side length 5 or larger, and each district consists of two merged geographic regions, the associated ReCom chain is irreducible. This provides another entry in the very small list of known classes of ReCom chains for which irreducibility has been established.

We also demonstrate the fragility of this phenomenon by constructing an infinite family of maps for which the corresponding ReCom chain is not irreducible. Indeed, we produce a districting map that, after implementing a single ReCom move, always yields the same original map.  These examples remain structurally close to the triangular lattice: they arise as subdivisions of the triangular lattice, and the resulting graphs have maximum degree at most 8.

Finally, we prove irreducibility for a further special case: the ReCom chain on a $3 \times n$ grid graph partitioned into three districts of size $n$.

\end{abstract}

\keywords{Metagraphs, ReCom, Graph Matching, Dimer Tilings, Locked Tilings}

\section{Introduction}

In recent years, the study of redistricting has captured the interest of many mathematicians, computer scientists, and other computationally-minded members of the academy \cite{mgggWebpage, quantifyingGerrymanderingWebpage, PGP}.  In particular, many academics have used Markov chain Monte Carlo (MCMC) methods to construct ensembles of potential redistricting maps, against which a proposed map is compared \cite{ColoradoInContext, DukeNC, freeElectionsFreeState}.  With this ensemble, one can conduct an outlier analysis:  if the proposed map appears to be an outlier compared to the maps in the ensemble, this suggests the proposed map may have been drawn with partisan intent.

Among the methods used to construct this ensemble of maps, using a ReCom step in MCMC has emerged as a particularly popular method \cite{RecomMGGG}.  This method starts with the dual graph of the regions which are pieced together to comprise the districts (precincts, census blocks, etc).  Two adjacent districts are chosen, and we consider the subgraph induced by the nodes in those two districts.  A spanning tree is chosen within that subgraph, and then an edge is chosen to be cut so that the two remaining connected components have the correct population for a single district.  Those two new districts, along with the other unchanged districts, form a new districting map which is one step away from the original map in the Markov chain.  In this way, we consider what many have called the \emph{metagraph} of this Markov chain: each node in the metagraph is a districting map, and two maps are adjacent if they can be constructed from one another by a single 
ReCom step (see, for example, \cite{MGGGmetagraph}).

An important question whenever a finite Markov chain is utilized is whether that chain is \emph{irreducible}: whether all states can be reached from all other states in a finite number of steps.  In this context, irreducibility means that any valid map can be produced by a sequence of ReCom moves from a single starting map; equivalently, irreducibility means that the corresponding metagraph is connected.  The answer to this irreducibility question is generally unknown in the context of redistricting, when graphs come from real maps and districts must satisfy a population constraint.  Thus, several researchers have studied the connectivity of the ReCom metagraph in specific contexts.

There are many different variations of the metagraph connectivity question, depending on the choices made in the categories below:
\begin{itemize}
\item \textbf{Internal structure of the dual graph.} The underlying dual graph could be a square lattice \cite{JTF_tilings, Akitaya_et_al}, triangular lattice \cite{cannon_irreducibility, Akitaya_et_al}, or some other graph \cite{roising2023ergodic,nakano2010local}. It is almost universally assumed that the dual graph is planar.
\item \textbf{Boundary shape of the dual graph.} The dual graph could be in the shape of a large square, rectangle \cite{JTF_tilings, Akitaya_et_al}, triangle \cite{cannon_irreducibility, Akitaya_et_al}, or other shape \cite{Thurston01101990,KENYON1996191}. Periodic boundary conditions have also been considered, corresponding to a map on a torus \cite{moessner2001resonating,JTF_tilings}.
\item \textbf{Node and district populations.} In the simplest models, all nodes have population 1 and each district must have a fixed size. In real-world applications, node populations vary (since each precinct or census block can have a different population), and a districting map is legal if all the district populations are equal up to a fixed error tolerance \cite{RecomMGGG}. Other variations include allowing each node to have either population 1 or population 2 \cite{JTF_tilings}, or allowing an error tolerance of $\pm 1$ in district population \cite{cannon_irreducibility, Akitaya_et_al}.
\item \textbf{Number and size of districts.} At the smallest extreme, each district could be composed of exactly 2 nodes in the dual graph. At the largest extreme, the entire dual graph could be divided into 3 large districts \cite{Akitaya_et_al, cannon_irreducibility}. (If there are only 2 large districts, then the ReCom metagraph is trivially connected.) More generally, one can consider many small districts of fixed population \cite{JTF_tilings}, or a fixed number of districts of large population, or an intermediate regime such as a dual graph with order $n^2$ nodes that is divided into approximately $n$ districts of size $n$ each \cite{Donnay21102025, JTF_tilings}.
\end{itemize}

Different choices in these categories can lead to either positive or negative answers to the key question of metagraph connectivity. For example, Tucker-Foltz \cite{JTF_tilings} considers the case where the dual graph is a square lattice in the shape of a large $n \times n$ square, all nodes have population 1, all districts have exactly equal size, and the number of districts is a fixed constant. He conjectures that under these assumptions, the metagraph is connected for sufficiently large $n$. He then provides an explicit construction to show that this conjecture is false if the nodes are allowed to have population 1 or 2. Likewise, Cannon \cite{cannon_irreducibility} proves that the metagraph is connected when the dual graph is a triangular lattice in the shape of a large triangle, all nodes have population 1, district size is allowed to vary by $\pm 1$, the districts are simply connected, and there are 3 districts. This is an impressive positive result whose robustness under changes in the assumptions is unclear.  A stronger version of this result (removing restrictions on district sizes and size of the triangular map, and removing the requirement that districts be simply connected) was proved more succinctly in \cite{Akitaya_et_al}.  The authors of \cite{Akitaya_et_al} also proved a parallel result in the case where the underlying dual graph is a rectangular subset of the grid graph.

We note that the constructions of \cite{JTF_tilings} only work when district sizes are required to stay constant, while the proof techniques of \cite{cannon_irreducibility, Akitaya_et_al} only work when district sizes may vary by $\pm 1$. All the results in this paper are proved under the assumption that the district sizes must stay constant.

\subsection{Overview of main results}\label{sec:intro_main_results}

Our work is focused in two directions: positive results that provide conditions under which the metagraph is connected, and negative constructions that explore the fragility of metagraph connectivity under small changes in the assumptions. We assume throughout that all dual graph nodes have population 1 and that all districts must be of exactly equal size with no error tolerance.

We primarily consider the case where the underlying dual graph is a triangulation, meaning a planar graph with all inner faces being triangles.  We focus on triangulation graphs in part because in our experience (and the anecdotal experience of many other researchers) many of the dual graphs under consideration when studying redistricting maps happen to be nearly triangular\footnote{By ``nearly triangular'' we mean that the dual graph is typically planar outside of a few regions that are complicated by anomalies such as water adjacencies or disconnected precincts, and most of the interior faces are triangles.}.  In addition, the question of metagraph connectivity can be somewhat more subtle on a triangular graph than on the square grid, as we demonstrate below.

Our first main result concerns the case where the districts have size 2, that is, each district consists of 2 dual graph nodes. In this setting, a legal districting map is the same thing as a \emph{dimer tiling}, or perfect matching, of the dual graph. (A dimer tiling on the square lattice is known as a \emph{domino tiling}.) Thurston \cite{Thurston01101990} used Conway's tiling groups to show that for domino tilings on the square lattice, the metagraph is always connected so long as the region being tiled is simply connected.

We consider dimer tilings on the triangular lattice. Here, the situation is more complicated: there are dual graph boundary shapes for which the metagraph is not connected.  The example of the metagraph for the triangle of side length 3 is the most basic example (as Cannon also noted \cite{cannon_irreducibility}); it is two isolated nodes, and can be seen in \cref{fig:triangle-3-metagraph}.

We also share a more interesting family of examples, in which the metagraph has at least two connected components which are not graph isomorphisms of each other.  \Cref{fig:herringbone} shows two dimer tilings of the same dual graph. The left tiling is a \emph{locked configuration} (whose pattern could be repeated in a larger region). In general, a locked configuration is a districting map such that, if any two adjacent districts are merged, the only way to split them into connected districts of equal size is to re-form the original two districts. In other words, a locked configuration is an isolated vertex in the metagraph. The tiling on the right of the same dual graph clearly has other tilings that can be reached by nontrivial ReCom moves.  Thus, \Cref{fig:herringbone} demonstrates a family of metagraphs\footnote{A \emph{family} because this pattern can be repeated on a larger scale.} whose dual graph is disconnected, with not all connected components isomorphic to each other.\footnote{Given Thurston's connectivity result, one might wonder what happens when trying to translate \Cref{fig:herringbone} into the square lattice. It turns out that the locked configuration on the left can be drawn on the square lattice, but it is the only legal dimer tiling of its underlying graph, so the metagraph only has one vertex.}

\begin{figure}[h]
    \centering
    \begin{tabular}{cc}
    \includegraphics[width=3.3cm]{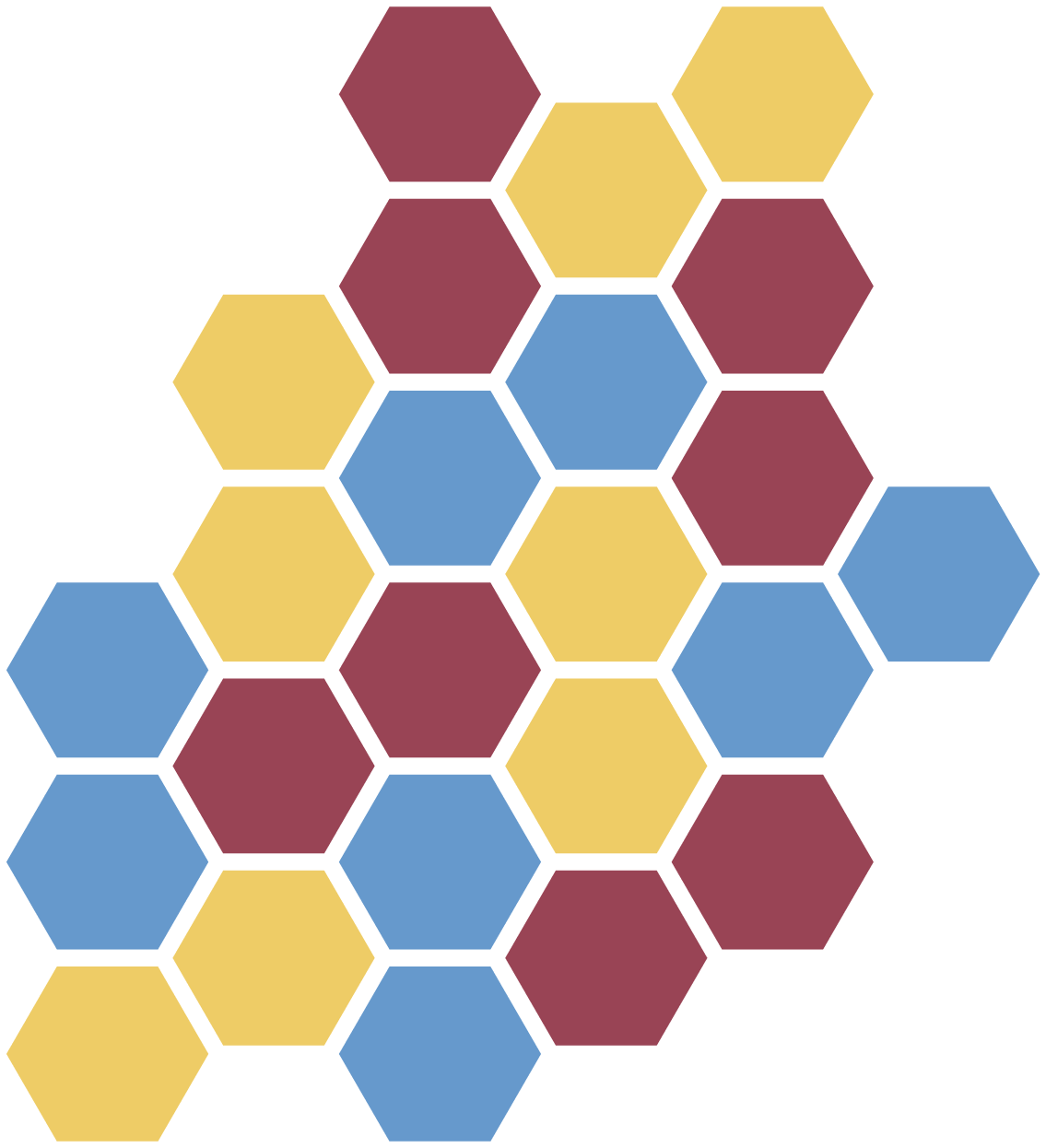}
    &
    \includegraphics[width=3.3cm]{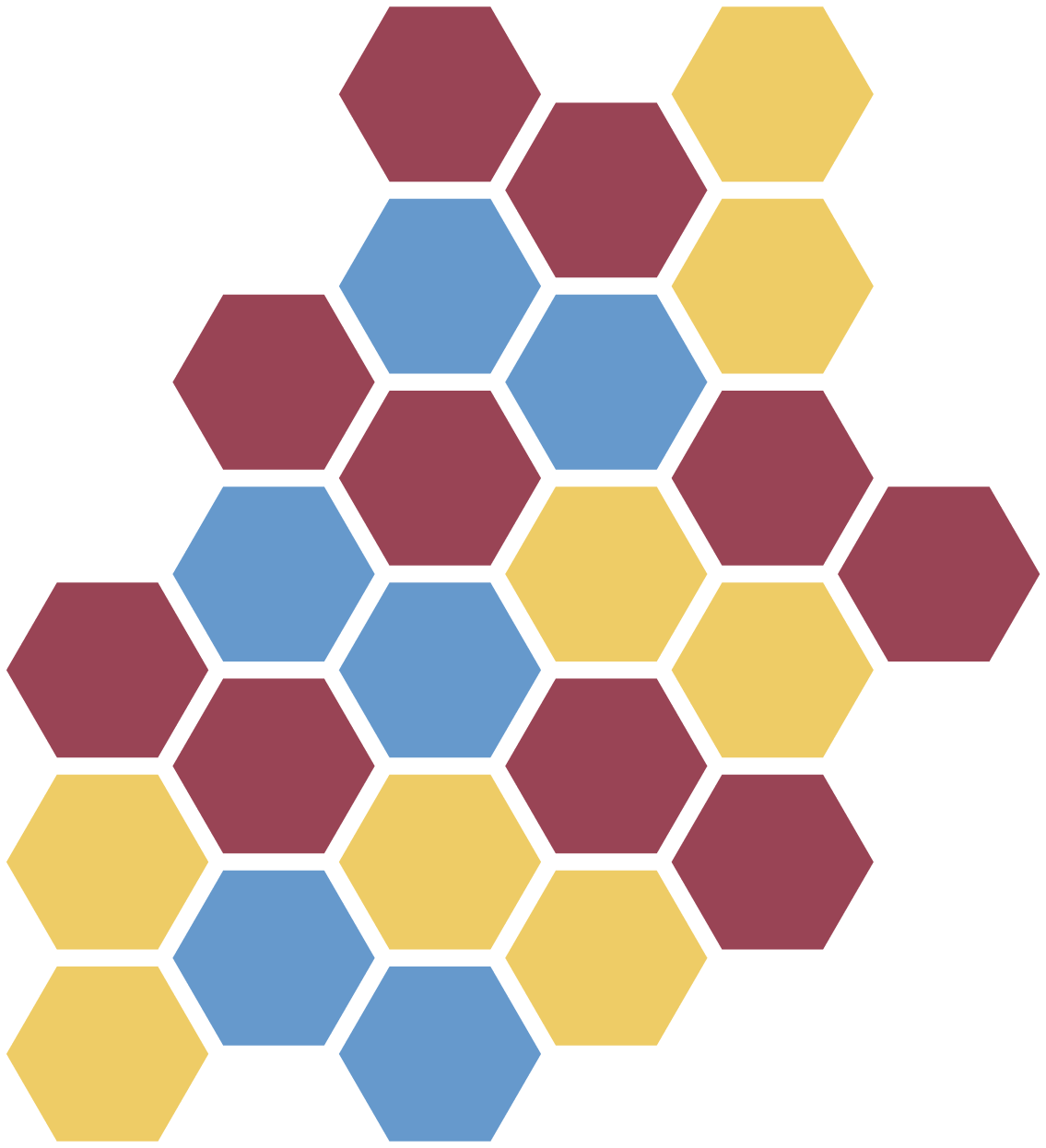}
    \\
    \includegraphics[width=2.6cm]{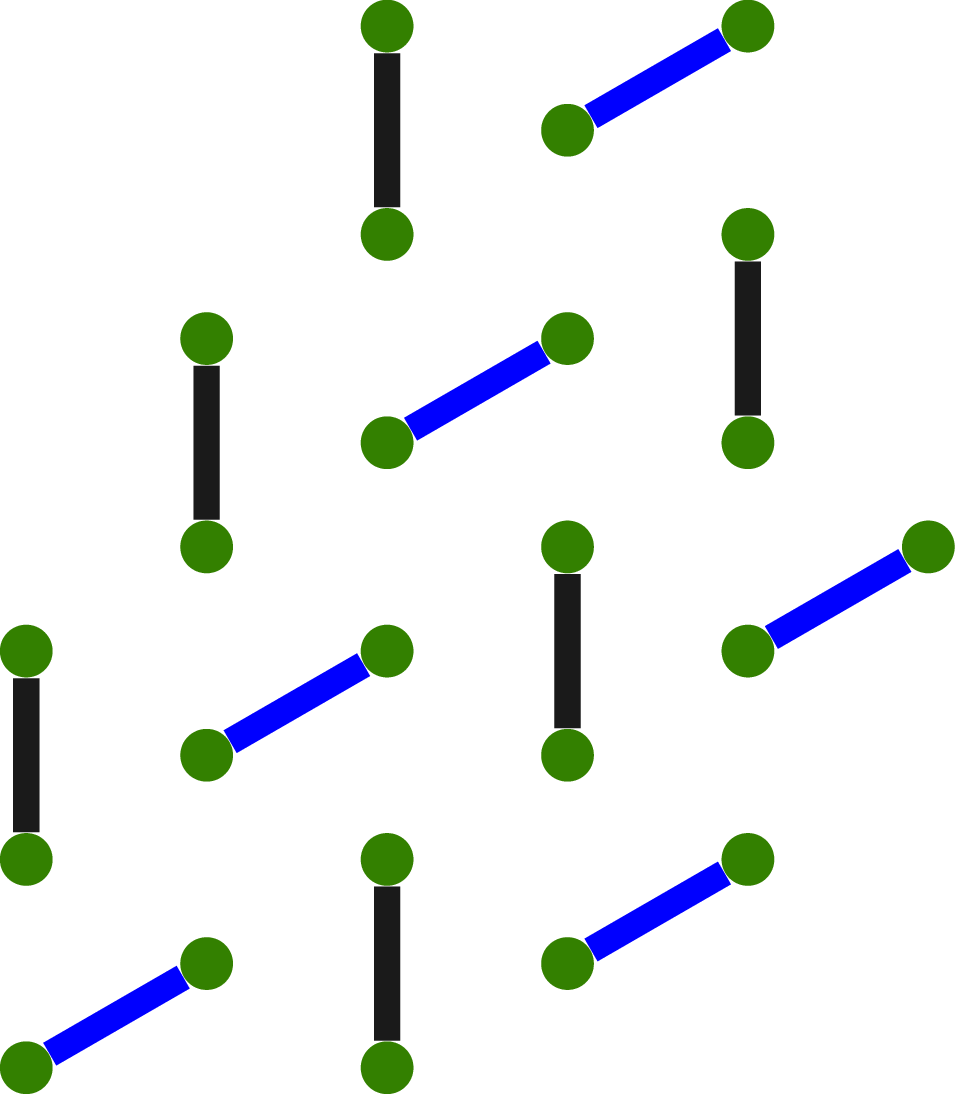}
    &
    \includegraphics[width=2.6cm]{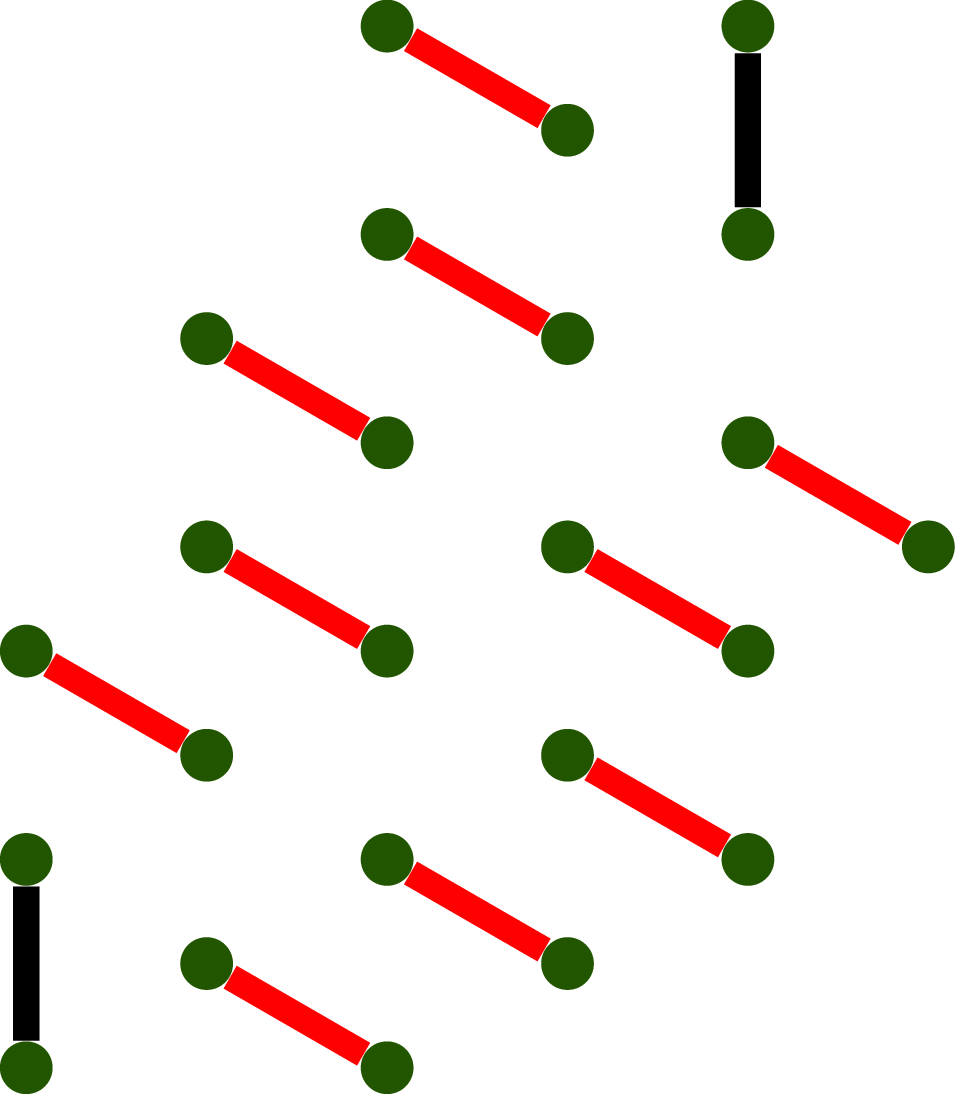}
    \end{tabular}
    
    \caption{Top left and top right: Two tilings of a region.  The tiling on the left is locked whereas the one on the right is not.  As the tiling on the left is locked and there are at least two tilings, the metagraph of this region is disconnected. Bottom left and bottom right: Schematic diagrams for the same two tilings, where each hexagonal cell is represented by a node and cells in the same tile are connected by a line segment. We will pass back and forth between these two equivalent visual representations throughout the paper.}
    \label{fig:herringbone}
\end{figure}

In contrast with this example, \Cref{thm:main_result_triangular_lattice} proves that the metagraph for dimer tilings on the triangular lattice is connected when the dual graph is in the shape of a large triangle (of side length at least 5). See \Cref{fig:example-triangle-tiling} for an example. \Cref{thm:main_result_triangular_lattice} is of both theoretical and practical interest. From a theoretical point of view, it adds to the short list of positive results guaranteeing metagraph connectivity under specific assumptions, and it provides the first natural family of examples in which the metagraph is either connected or disconnected depending on the boundary shape of the dual graph while all other features remain the same.

\begin{figure}[h]
    \centering
    \includegraphics[width=3.6cm]{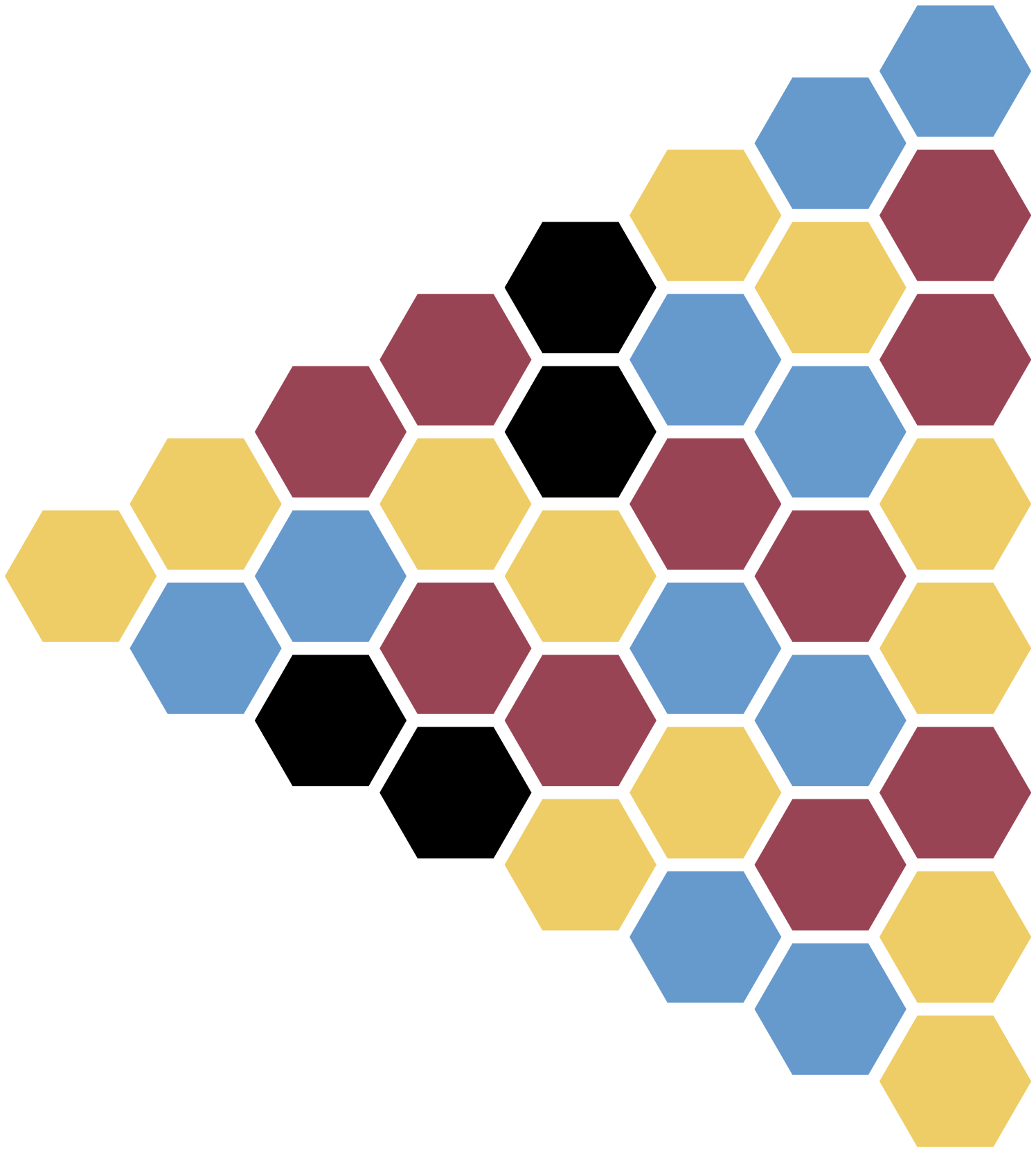}
    \caption{An example tiling of the triangle of side-length 8.}
    \label{fig:example-triangle-tiling}
\end{figure}

Practically, the case of size-2 districts has very real application in redistricting. Indeed, 15 states have a \emph{nesting} requirement (or preference) for their state senate districts:  they require (or prefer) that each state senate district be constructed from 2 (or for some states 3) state house districts merged together \cite{HouseSenateMergingStates}.  Thus, \Cref{thm:main_result_triangular_lattice} effectively says that if a state had this 2:1 nesting requirement and its underlying dual graph for the state house districts were a triangular subset of the triangular lattice, then every possible state senate districting map could be produced by starting with a matching of state house districts and performing a sequence of ReCom moves.

Our second main result is a negative counterpart to Cannon's positive connectivity result on the triangular lattice \cite{cannon_irreducibility} (re-proved in \cite{Akitaya_et_al}). We make the following changes to her assumptions:
\begin{itemize}
\item Cannon requires the dual graph to be a triangular lattice. We relax this condition but still require the dual graph to be a triangulation.
\item Cannon considers the case of 3 districts. We consider 6 districts.
\item Cannon allows variation of $\pm 1$ in the district size. We require all districts to have the same fixed size.
\item The boundaries of Cannon's shapes are equilateral triangles.  We allow for a family of shapes, which includes hexagons whose adjacent side lengths are, in order around the boundary: $2k+1$, $2k-1$, $2k+1$, $2k-1$, $2k+1$, $2k-1$.
\end{itemize}
In this setting, our \Cref{thm:locked-6} constructs an infinite family of dual graphs, each of which admits a locked configuration. \Cref{fig:locked-6} shows the construction.

\afterpage{%
\begin{figure}[!htbp]
\centering
\includegraphics[width=0.6\textwidth]{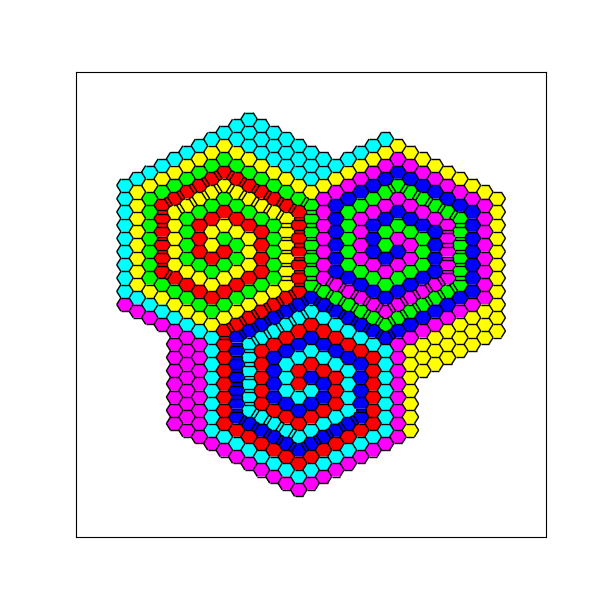} \\
\includegraphics[width=0.6\textwidth]{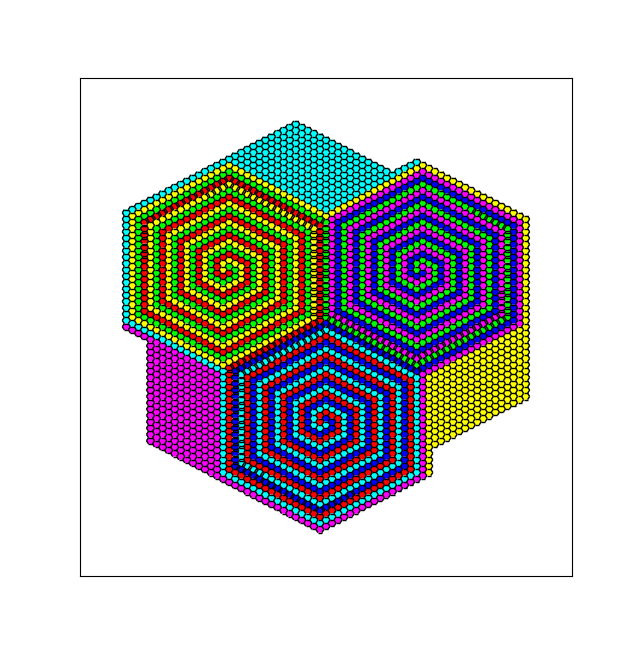}
\caption{Construction of \Cref{thm:locked-6}: Locked configurations with six districts where the dual graph is a bounded degree triangulation. Top: Case $k=8$. Bottom: Case $k=17$.}
\label{fig:locked-6}
\end{figure}
\clearpage
}

Evidently, if there is a locked configuration then the metagraph must be disconnected (unless the underlying dual graph has only one legal districting map, which is not the case here). Taken together, \Cref{thm:locked-6} and the positive result of \cite{cannon_irreducibility} provide another demonstration of how sensitive the property of metagraph connectivity is to the specific assumptions that are imposed.

It is instructive to compare \Cref{thm:locked-6} with a similar locked configuration constructed by Tucker-Foltz \cite{JTF_tilings} on the square lattice with some nodes having population 1 and others having population 2. Both locked configuations consist of 6 equally sized districts. Besides the differences in dual graphs (square lattice versus triangulation) and node populations (a mixture of 1's and 2's versus all 1's\footnote{It is worth noting that our construction could be done on the triangular lattice (without subdividing it) by allowing some nodes to have population 1, and some to have population 2. See Step 3 in \Cref{sec:locked_few_districts} for further discussion.}), there is another distinction of interest. Each individual district in the construction of \cite{JTF_tilings} is ``snake-shaped'': it forms a path in the dual graph. In \Cref{thm:locked-6}, by contrast, three of the six districts are snake-shaped (red, green, and dark blue in \Cref{fig:locked-6}) and the other three are not (light blue, purple, and yellow), as they have a region comprising about half the nodes of the district which forms a parallelogram. Putting together the three solid regions of light blue, purple, and yellow, those regions cover $1/4$  of the dual graph asymptotically as the number of nodes goes to infinity.

This feature of \Cref{thm:locked-6} is noteworthy because MCMC ReCom algorithms used in practice are designed to favor compact districts and stay away from snakelike shapes \cite{RecomMGGG}. We are far from drawing any direct connection between the theoretical existence of locked configurations in idealized models and the mixing properties of ReCom Markov chains applied to real-world maps. Still, \Cref{thm:locked-6} demonstrates that it is possible to create a locked configuration where $1/4$  of the map looks like portions of the typical compact shapes of an actual district map. It is an open question whether this $1/4$   fraction could be increased by a different construction.

Could one construct a family of locked configurations such as in \Cref{thm:locked-6} or in \cite{JTF_tilings} with fewer than 6 districts? To discuss this question, we define the \emph{district graph} of a configuration as follows. The vertices of the district graph correspond to the districts in the configuration, and two vertices of the district graph are connected by an edge if and only if the corresponding districts in the configuration share a border. \Cref{fig:district-graph} shows the district graph of the configuration in the top panel of \Cref{fig:locked-6}.

\begin{figure}[h]
    \centering
    \begin{tikzpicture}[scale=0.6]
        \node[circle, draw, fill=aqua] (1) at (0,4) {};
        \node[circle, draw, fill=pink] (2) at (-2.31,0) {};
        \node[circle, draw, fill=yellow] (3) at (2.31,0) {};
        \node[circle, draw, fill=blue] (4) at (-0.58,1.67) {};
        \node[circle, draw, fill=red] (5) at (0.58,1.67) {};
        \node[circle, draw, fill=green] (6) at (0,0.67) {};

        \path[draw] (1) to[bend right] (2);
        \path[draw] (1) to[bend left] (3);
        \path[draw] (2) to[bend right] (3);
        \path[draw] (1) to (4);
        \path[draw] (1) to (5);
        \path[draw] (2) to (4);
        \path[draw] (2) to (6);
        \path[draw] (3) to (5);
        \path[draw] (3) to (6);
        \path[draw] (4) to (5);
        \path[draw] (4) to (6);
        \path[draw] (5) to (6);
    \end{tikzpicture}
    \caption{District graph of the configuration in the top panel of \Cref{fig:locked-6}.}
    \label{fig:district-graph}
\end{figure}

The locked configurations constructed in \Cref{thm:locked-6} and in Section 6 of \cite{JTF_tilings} have exactly the same district graph as \Cref{fig:district-graph}. This is because the constructions require each district to border at least 4 others to make the configuration locked, and the district graph must be planar. \Cref{fig:district-graph} is the smallest planar graph in which every vertex has degree at least 4. (The only graph on 5 vertices with minimum degree 4 is the complete graph $K_5$, which is nonplanar.)

To construct a locked configuration with fewer than 6 districts, at least one of them would need to have degree 3 or less in the district graph. This is possible in very small cases (see \Cref{fig:triangle-3-metagraph}), but it is not known whether there is an infinite family of such examples.

A common feature of the locked configurations whose district graph is shown in \Cref{fig:district-graph} is that if one allows the district size to vary by $\pm 1$, the configurations are no longer locked. If one wanted to construct a configuration that is still locked when allowing for minor fluctuations in the district size, a good starting point might be a district graph with minimum degree 5.

Our last main result, \Cref{thm:3_by_n_rectangle}, considers a square lattice dual graph. We assume as always that each node has population 1 and that all districts are the same size. Tucker-Foltz \cite{JTF_tilings} conjectured that if the dual graph is an $n \times n$ square grid split into $k$ equal districts, where $k$ is fixed, then the metagraph is connected for sufficiently large $n$. We prove in \Cref{thm:3_by_n_rectangle} that the metagraph is indeed connected when the dual graph is a $3 \times n$ square grid split into 3 equal districts.

\subsection{Prior work}\label{sec:prior_metagraph}

There is a substantial literature on dimer tilings (and perfect matchings). Kasteleyn \cite{kasteleyn1967graph} provided an exact formula for the number of dimer tilings on a given graph. The \emph{dimer model} in statistical physics \cite{kenyon2003introduction} is the study of random dimer tilings on a graph and has connections with conformal probability, random matrix theory, and other branches of mathematics.

As previously mentioned, Thurston \cite{Thurston01101990} considered domino tilings on subgraphs of the square lattice and showed that the metagraph is always connected so long as the region being tiled is simply connected. Kenyon and R\'{e}mila \cite{KENYON1996191} gave a linear-time algorithm that determines if a subgraph of the triangular lattice has a perfect matching (and produces a matching, if one exists).  In \cite{KENYON1996191}, they also proved that if $G$ is a subgraph of the triangular lattice with a perfect matching, then any two perfect matchings of $G$ can be transformed into each other by a linear number of ReCom moves, where combinations of 2, 3, or 4 districts together are allowed.  Hartarsky, Lichev, and Toninelli \cite[Theorem 8]{HLT24} refined this result by showing that only ReCom moves combining 2 or 3 districts are needed when the underlying graph is in the shape of a large parallelogram.  We show in \Cref{thm:main_result_triangular_lattice} that when the underlying graph is in the shape of a large triangle, in fact only combinations of 2 districts are required to connect the metagraph.\footnote{Whether the graph has a parallelogram or triangle shape is simply a matter of convenience. It is pointed out in \cite{HLT24} that their method also works for triangles, and our approach to \Cref{thm:main_result_triangular_lattice} very likely works for parallelograms.}  The proof method of \cite{HLT24} is a double induction, quite similar to ours.  A key difference is that the authors of \cite{HLT24} use the 3-district ReCom moves to set up a relatively short case analysis that is done by hand.  Using only 2-district ReCom moves, one encounters examples such as \Cref{fig:failing-cover} which show that some serious computational power is necessary.

Moessner and Sondhi \cite{moessner2001resonating} considered dimer tilings on the triangular lattice with periodic boundary conditions. In that setting, they found two parity invariants that split the metagraph into four sectors (odd-odd, odd-even, even-odd, even-even) which are closed under ReCom moves. Additionally, they constructed 12 locked configurations of the same form as the left panel of \Cref{fig:herringbone}. They conjectured that the four metagraph sectors are connected if one excludes the 12 isolated vertices corresponding to the locked configurations. To our knowledge, this conjecture remains open. It could be resolved by an approach similar to our proof of \Cref{thm:main_result_triangular_lattice}, but significantly more computational power would be required.

Cannon \cite{cannon_irreducibility} considered metagraphs in the triangular lattice with three large districts.  She meticulously showed that if a triangular subset of the triangular lattice is partitioned into three simply connected components of size $k_1, k_2$, and $k_3$, and any ReCom move is allowed that keeps those sizes within 1 of ideal (the sizes are $k_i'$ where $k_i' \in \{k_i-1, k_i, k_i+1\}$ for $i = 1, 2, 3$), then the corresponding metagraph is connected. Our \Cref{thm:locked-6} is a negative counterpoint to Cannon's positive result, as we discussed in \Cref{sec:intro_main_results}.

Akitaya et al \cite{Akitaya_et_al} proved a stronger version of Cannon's result \cite{cannon_irreducibility}, removing the restriction on district sizes, the restriction on size of the subset of the triangular lattice, and the requirement that districts be simply connected. Akitaya et al also prove the same result when the underlying graph is a rectangular subset of the grid graph.  We note that the allowance that district sizes be allowed to diverge by $\pm 1$ from the ideal size is key to both Cannon and Akitaya et al's proofs.  It also distinguishes Akitaya et al's grid graph result from our \Cref{thm:3_by_n_rectangle}.

Tucker-Foltz \cite{JTF_tilings} conducted an in-depth study of many different metagraphs with a focus on locked configurations. Here are some of his results:
\begin{itemize}
\item There are arbitrarily large locked configurations on rectangular subgraphs of the square lattice for districts of size 3 and districts of size 4.
\item There are infinite families of locked configurations with approximately $n$ districts of size $n$ each on both the square lattice and the triangular lattice with periodic boundary conditions.
\item There is an infinite family of locked configurations with 6 districts on $n \times n$ grid graphs where each node has population 1 or 2. \Cref{sec:intro_main_results} discussed the comparison between this construction and our \Cref{thm:locked-6}.
\item When the dual graph is the $6 \times 6$ grid and each district has size 3, the metagraph consists of many different connected components (137 in total) of all sizes from the set $\{1, 2, 8, 16, 19, 20, 68, 199, 235, 384, 73738\}$.  Some of the connected components of the same size are isomorphic, and some are not.  As we shall see in \Cref{sec:invariants}, the study of ReCom invariants can distinguish between some of these connected components, but certainly cannot distinguish between all of them.  This small example is a fascinating glimpse into the complexity of these ReCom metagraphs.
\end{itemize}

We note that a key component in \cite{cannon_irreducibility} is an ``Unwinding Lemma'' that shows how to use ReCom moves to disentangle districts which may have long, intertwining arms.  This idea of having long entangled districts which cannot be re-drawn with ReCom moves was a key feature of several of the locked configurations in \cite{JTF_tilings}.  We also explore the impact of intertwined districts in \Cref{sec:locked_few_districts,sec:thin_rectangle}.

For other related work, we point the reader to the references in Section 1.1 of \cite{JTF_tilings}.

\subsection{Outline}

The rest of this paper is organized as follows:  We discuss and prove \Cref{thm:main_result_triangular_lattice} in \Cref{sec:size_two_districts}.  We discuss and prove \Cref{thm:locked-6} in \Cref{sec:locked_few_districts}, and we prove \Cref{thm:3_by_n_rectangle} in \Cref{sec:thin_rectangle}.  We discuss conclusions and open questions in \Cref{sec:conclusions}.

We also include, in \Cref{sec:invariants}, a discussion of algebraic invariants on connected components of ReCom metagraphs for the interested reader.  These invariants were studied in \cite{ExploringMetagraphs} in the case of the $6 \times 6$ grid graph, and we found that exploration to be interesting and motivating.  Perhaps not surprisingly, just as in \cite{ExploringMetagraphs}, we see that these invariants cannot distinguish between all connected components in a metagraph for a triangular subset of the triangular lattice.  Nevertheless, these invariants can pick up nuances in the connected components of such metagraphs, and we believe this study is worth further exploration.  

Lastly, \Cref{sec:pseudocode} includes the pseudocode and a description of the algorithms we used for computer search as part of our proof of \Cref{thm:main_result_triangular_lattice}.

\section{Triangular lattice with districts of size 2}\label{sec:size_two_districts}

Recall that the \emph{ReCom metagraph} is the graph whose nodes are tilings (districting maps) and edges are between nodes that can be obtained from each other by a single ReCom move.  Here we investigate ReCom metagraphs of dimer tilings of triangle-shaped subsets of the triangular lattice.  We show that the metagraph is connected for all triangles with
an even number of cells except for triangles of side-length 3 and 4.

We adopt the convention that our triangles are oriented with one vertex on the left and a vertical column on the right. In addition, we must sometimes remove the bottom corner of the triangle to ensure that the total number of cells is even. See \Cref{fig:clipped-full-triangles} for example tilings and triangles.

Our proof of metagraph connectivity is an inductive argument based on the side length. The inductive step comes down to a large but finite case analysis which is verified by computer search.

\subsection{Clipped and full triangles}

A necessary condition for a triangle of side-length $n$ to be tiled with tiles
of size $k$ is that $k$ must divide $\tfrac{n(n + 1)}{2}$, the number of nodes
in the triangle.  As we consider $k = 2$, that means that $\tfrac{n(n + 1)}{2}$
must be even, which is true when $n \equiv 0, 3 \pmod 4$.

To extend our argument to triangles of any size, we remove the bottom corner of
the triangle when $n \equiv 1, 2 \pmod 4$. We call triangles without a bottom corner
\emph{clipped triangles}. We call triangles that are not clipped \emph{full
  triangles} whenever we need to distinguish them.  We use the following
definition to make claims about triangles that are either clipped or full, depending on
the side length:

\begin{Definition}[\Ctriangle]
  An \ctriangle of side length $n$ is the full triangle with side length $n$ if $n \equiv 0, 3 \pmod 4$, and it is the clipped triangle with side length $n$ if $n \equiv 1, 2 \pmod 4$.
\end{Definition}

We emphasize that the word ``even'' in ``\ctriangle'' means that the number of nodes is even, not that the side length $n$ is even. \Cref{fig:clipped-full-triangles} shows example tilings for clipped and full triangles.

\begin{figure}[h]
  \centering

  \begin{subfigure}[b]{0.20\textwidth}
    \includegraphics[width=2.25cm]{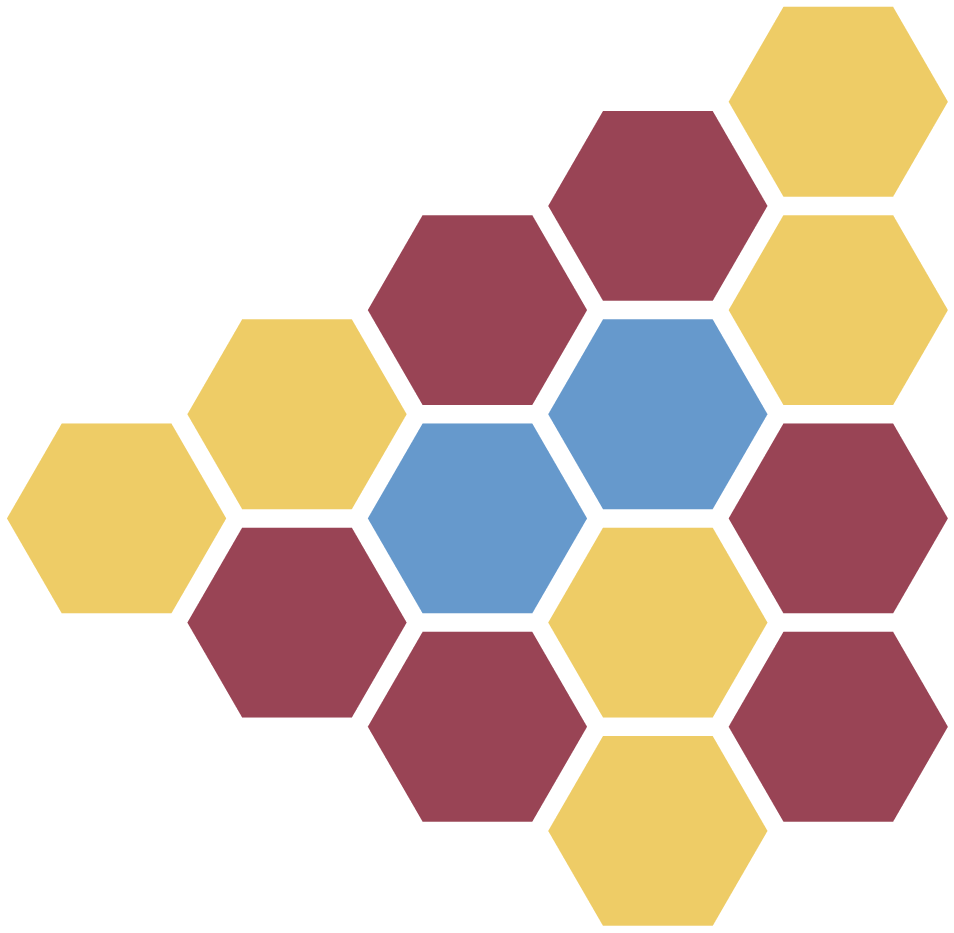}
    \caption{$5 \equiv 1 \pmod 4$\\ \centering clipped}
  \end{subfigure}
  \hfill
  \begin{subfigure}[b]{0.21\textwidth}
    \includegraphics[width=2.7cm]{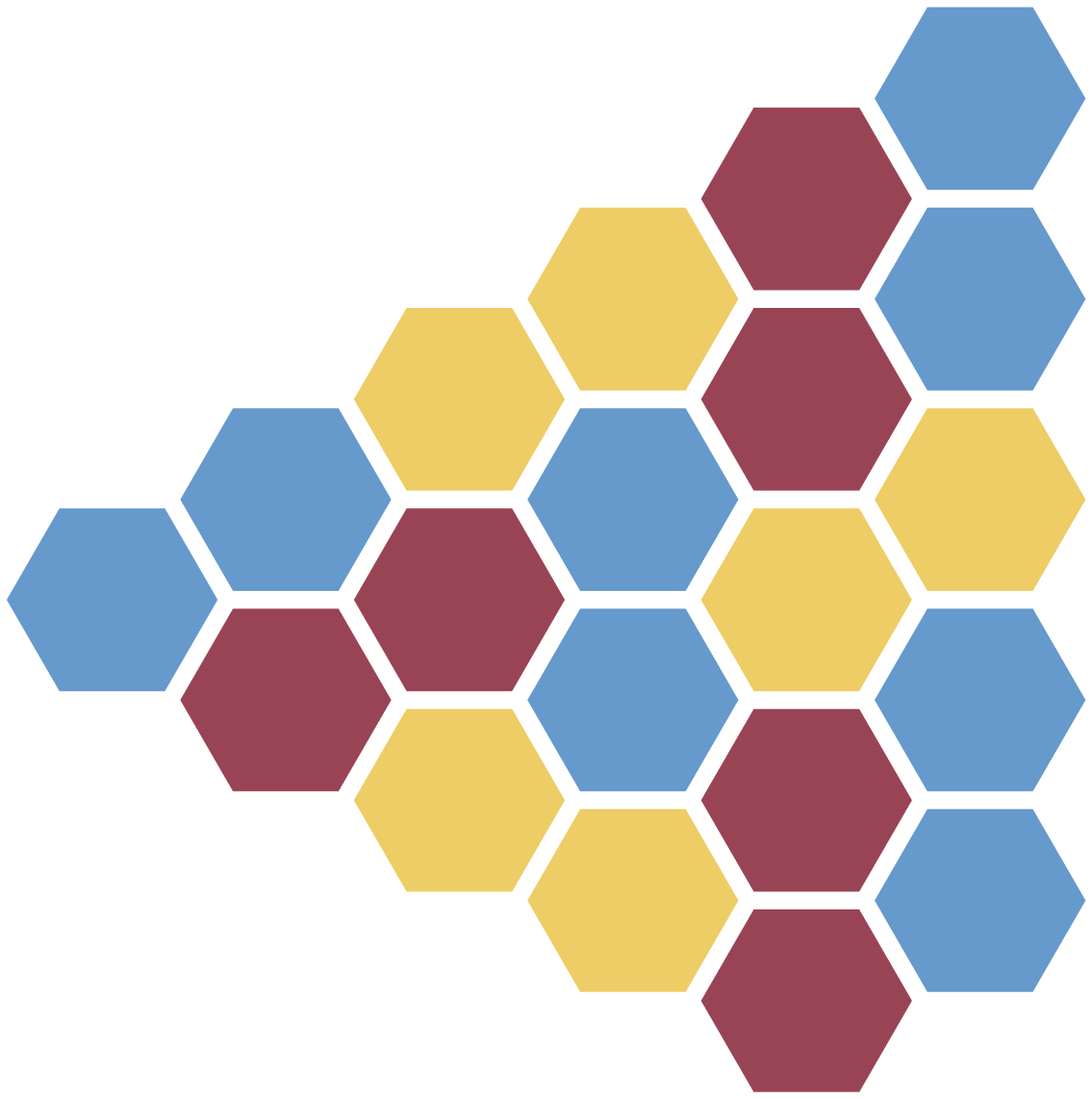}
    \caption{$6 \equiv 2 \pmod 4$\\ \centering clipped}
  \end{subfigure}
  \hfill
  \begin{subfigure}[b]{0.24\textwidth}
    \includegraphics[width=3.15cm]{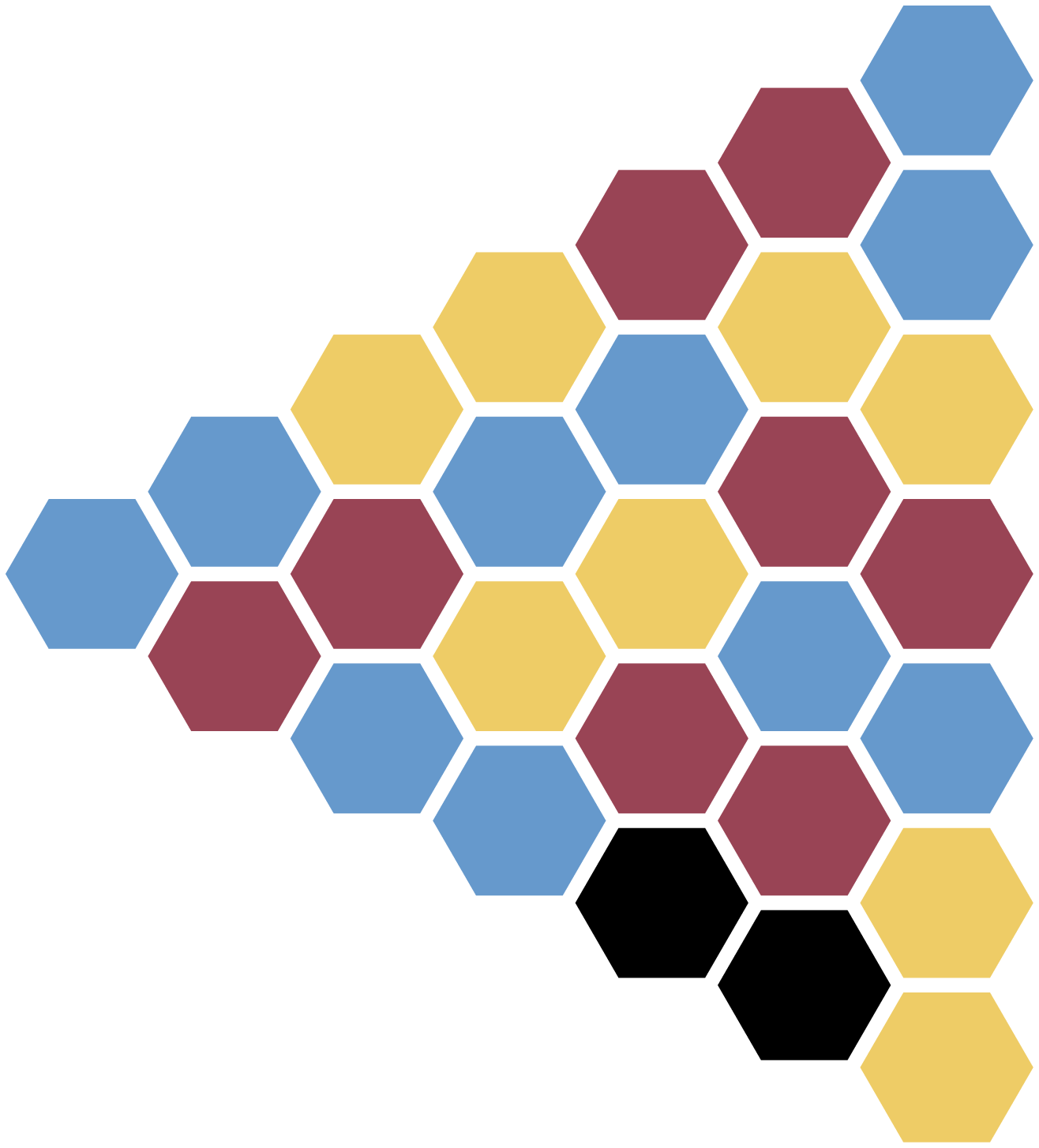}
    \caption{$7 \equiv 3 \pmod 4$\\full}
  \end{subfigure}
  \hfill
  \begin{subfigure}[b]{0.28\textwidth}
    \includegraphics[width=3.6cm]{8-pdf}
    \caption{$8 \equiv 0 \pmod 4$\\full}
  \end{subfigure}
  
  \caption{Example tilings for clipped and full triangles with different side lengths $\mod 4$.}
  \label{fig:clipped-full-triangles}
\end{figure}

\subsection{The structure of our proof}

Our main claim is the theorem below.

\begin{Theorem}\label{thm:main_result_triangular_lattice}
  For all $n$ except 3 and 4, the ReCom metagraph for dimer tilings of the \ctriangle of side length $n$ is connected.
  Moreover, the diameter of the metagraph is in $\Theta(n^2)$.
\end{Theorem}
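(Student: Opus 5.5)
The plan is to induct on the side length, using a canonical target tiling $C_n$ of the \ctriangle $T_n$ of side length $n$. We show that every tiling of $T_n$ reaches $C_n$ in $O(n^2)$ ReCom moves. Since moves are reversible, this gives connectivity and the upper bound on the diameter.

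For $C_n$ we take a tiling whose rightmost vertical column (length $n$) and the adjacent column are tiled in a fixed pattern, and whose restriction to the remaining smaller triangle is $C_{n'}$ for a suitable $n' < n$. The natural choice is to strip off a fixed number of columns so that $n'$ has the right residue mod $4$ and the remainder is again an \ctriangle: stripping $4$ columns preserves $n \bmod 4$, and so preserves whether the triangle is clipped or full. A tiling that agrees with $C_n$ on the stripped strip restricts to a tiling of $T_{n-4}$, and any ReCom move inside $T_{n-4}$ is a legal move in $T_n$. So by induction it suffices to show the following: from any tiling of $T_n$, within $O(n)$ moves we can reach a tiling that matches $C_n$ on the strip of the last four columns. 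Summing $O(n)$ over the induction levels gives $O(n^2)$.

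The key step is therefore a \emph{strip-fixing lemma}. We process the strip cell by cell, say from top to bottom along the right edge. Suppose the first disagreement with $C_n$ is at cell $c$. We show that some bounded sequence of ReCom moves, supported in a bounded window around $c$ (a window in the strip plus a few neighboring columns), corrects $c$ without disturbing cells already fixed.

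Since the window is bounded, the possible local configurations are finitely many, and this is where the computer search enters. We enumerate all tilings of a window of fixed size, constrained so that tiles crossing the window boundary are treated as frozen. We then check by breadth-first search that the required local correction is always achievable. Separate finite checks handle the top corner, the bottom corner (clipped or full), and the junction with the smaller triangle. Each cell costs $O(1)$ moves, so the strip costs $O(n)$. The base cases $n=5,6,7,8$ (and whichever small $n$ the induction needs, since stripping four columns reduces to $n-4\ge 5$) are verified by exhaustively computing the metagraph. The exceptions $n=3,4$ appear precisely because the induction cannot start there; \Cref{fig:triangle-3-metagraph} shows the failure at $n=3$.

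The main obstacle is designing the local windows so that the finite check actually succeeds. Examples like \Cref{fig:failing-cover} show that small windows can contain locally locked patterns, in the spirit of \Cref{fig:herringbone}. My first attempt would use a window spanning about $4$ to $6$ columns and a few rows, enlarging it adaptively whenever the search fails. I would also allow the correction to temporarily disturb unfixed cells farther down the strip.

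For the lower bound $\Omega(n^2)$ on the diameter, we exhibit two tilings that differ on $\Theta(n^2)$ cells. For instance, take tilings whose tiles in a large central region are all horizontal, versus tilings whose tiles there all point in another lattice direction. A single ReCom move changes only two tiles, and hence at most $4$ cells' partners. Therefore at least $\Theta(n^2)/4$ moves are needed between them.
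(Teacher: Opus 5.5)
Your plan is essentially the paper's proof. You induct on side length by peeling off right-hand columns, fix the strip top-down with a computer-verified local gadget slid along the straight right edge (plus separate bottom-corner checks and exhaustive base cases, which the paper runs through $n=9$), and sum $O(n)$ moves per level to get $O(n^2)$. The $\Omega(n^2)$ bound likewise comes from two tilings differing on $\Theta(n^2)$ tiles; the paper makes this concrete by rotating all tiles inside a family of width-$4$ parallelograms.

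The one deviation, peeling four columns, is unneeded. The paper peels only two, $n\to n-2$, because the switch between full and clipped triangles is absorbed by the four bottom-corner gadgets of \Cref{fig:bottom-gadgets}. The real content of the proof is the specific window of \Cref{fig:inductive-gadget}, which the paper checked (about $1.4\times 10^{11}$ tilings) and which cannot be shrunk much, as \Cref{fig:failing-cover} shows.
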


We build up to this claim in the following manner:

\begin{itemize}
\item We use computer search to check the base cases up to side length 9.
\item We build an inductive proof based on the side length for larger side lengths.  This part of the proof also utilizes computer search.
\end{itemize}

The inductive proof rearranges the two rightmost columns of the triangle to
consist of only vertical tiles (except possibly at the very bottom). This reduces the side-length $n$ case to the
side-length $n - 2$ case.  To create vertical tiles in the two rightmost columns, we start at the top and work our way downward. The key ingredient is \Cref{lemma:inductive-gadget} below, which is illustrated by \Cref{fig:inductive-gadget}.

\begin{figure}[h]
  \centering
  \qquad\qquad\qquad\qquad \includegraphics[width=8cm]{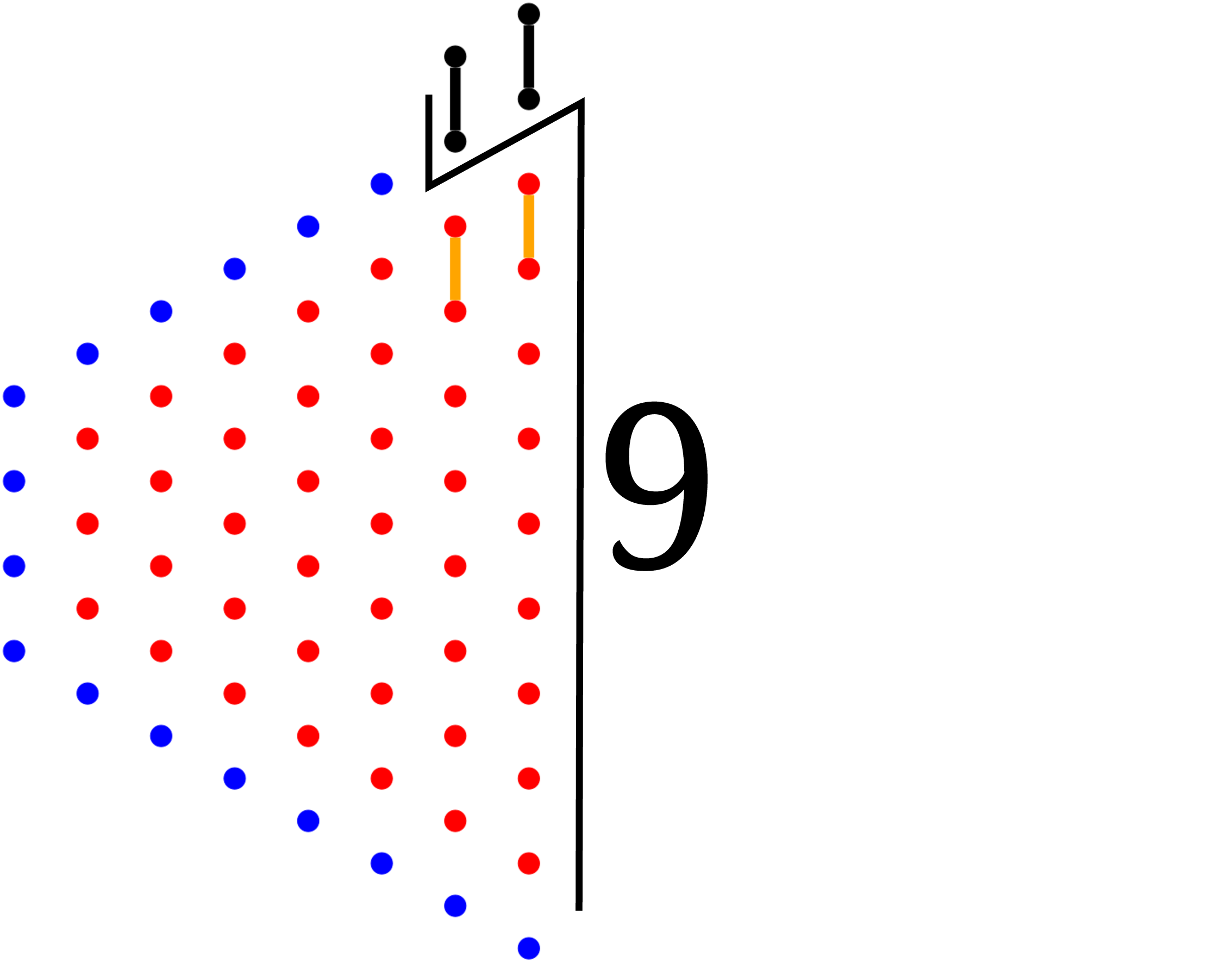}
  \vspace{8pt}
  \caption{The fixed region we use to build the two columns inductively. A ``permitted tiling'' of this region is a dimer tiling that covers all of the red nodes, any subset of the blue nodes, and none of the black nodes. The black lines indicate a hard boundary.  The black tiles are the tiles
    obtained in the previous iteration, and the orange tiles are the tiles we
    want to obtain in this inductive step.}
  \label{fig:inductive-gadget}
\end{figure}

\begin{Lemma}
\label[lemma]{lemma:inductive-gadget}
Consider \Cref{fig:inductive-gadget}.  A ``permitted tiling'' of this region is a dimer tiling that covers all of the red nodes, any subset of the blue nodes, and none of the black nodes.  Every permitted tiling of \Cref{fig:inductive-gadget} can be transformed by a series of ReCom moves into a tiling that contains the two orange tiles.
\end{Lemma}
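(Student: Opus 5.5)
This lemma concerns a fixed, finite region, so I would prove it by exhaustive computer search over the finitely many permitted tilings. The one thing to get right is the definition of the search space, because the region has blue nodes that may or may not be covered.

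First, model the region of \Cref{fig:inductive-gadget} as a finite subgraph of the triangular lattice. Its nodes are the red, blue and black cells. The black nodes are already covered by the black tiles from the previous iteration and are deleted. The hard boundary means that no tile may cross the drawn black lines.

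A permitted tiling is a matching that covers every red node and leaves any subset of blue nodes uncovered. The uncovered blue nodes are understood to be covered by tiles lying outside the window, whose partners we do not see.

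A ReCom move within the window merges two adjacent tiles, both fully inside the window, and re-splits the resulting $4$-node set into two adjacent pairs. Such a move is legal in the whole triangle whatever lies outside the window. So any sequence of these moves lifts to a sequence of ReCom moves on the \ctriangle. Tiles touching uncovered blue nodes from outside are never moved, so the constraint at those nodes is automatically respected.

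The procedure is then as follows.
\begin{itemize}
\item Enumerate all permitted tilings. For each subset $S$ of blue nodes, list the perfect matchings of the red nodes together with $S$ in the window graph.
\item Build the local metagraph whose vertices are these tilings and whose edges are the 2-tile ReCom moves described above. A move can change which blue nodes are covered only if it merges two tiles already inside the window, so the covered set $S$ is invariant and the search separates by $S$.
\item Mark as targets the tilings containing both orange tiles.
\item Run a breadth-first search backwards from the targets. Verify that every vertex reaches a target, and record the maximum distance; this constant will feed into the $\Theta(n^2)$ diameter bound.
\end{itemize}

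The main obstacle is a subtle one. Within a fixed $S$, the local metagraph may contain permitted tilings that cannot reach the orange tiles using moves confined to the window, for instance local locked patterns like \Cref{fig:herringbone}. The region in \Cref{fig:inductive-gadget} was presumably chosen large enough that this does not happen, so the search should succeed for every $S$.

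If it fails for some $S$, I would first try to enlarge the window by one more layer of blue nodes to the left. Then I would re-check that the boundary of the \ctriangle (top and right columns) is correctly encoded as hard boundary. Finally, I would check that the black tiles really are fixed and that merging moves involving them are excluded.

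Feasibility should be fine: the number of matchings times $2^{|\text{blue}|}$ is modest, and the pseudocode of \Cref{sec:pseudocode} performs this enumeration.
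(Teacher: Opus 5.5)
Your plan is the paper's proof: an exhaustive computer check, cover by cover (the red nodes plus an even-size subset of the blue nodes, with moves confined to the window), that every connected component of each cover's local metagraph contains a tiling with the two orange tiles, which is exactly what your backward search from the target tilings verifies. The one point to correct is your feasibility estimate: for this region the paper reports about 140 billion tilings and roughly 17 days on 32 cores with 1 TB of memory, so the search is far from modest.
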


\begin{proof}
    Via checking cases exhaustively using a computer.  This exhaustive search looks at every possible dimer tiling of all red nodes that covers any number of the blue nodes, and finds a sequence of ReCom moves from that tiling to another  tiling with the two orange vertical tiles.
    
    As this lemma pertains only to the specific graph in \Cref{fig:inductive-gadget}, it is a finite (but very large) case analysis.
    See \Cref{sec:computer-search} for details about the implementation.
\end{proof}

Suppose we are given an \ctriangle of side length $n>9$. Starting from any dimer tiling, \Cref{lemma:inductive-gadget} allows us to perform ReCom moves so that there is a vertical tile at the top of each of the two rightmost columns. Then, we use \Cref{lemma:inductive-gadget} again to create two more vertical tiles immediately below the first two. By repeatedly applying \Cref{lemma:inductive-gadget}, we obtain vertical tiles going all the way down the two rightmost columns from the top until we get close enough to the bottom corner that there is no longer enough room for \Cref{fig:inductive-gadget} to fit. At that point, we use \Cref{lemma:bottom-corner} below (illustrated by \Cref{fig:bottom-gadgets}) to complete the two rightmost columns.

\begin{figure}[h]
  \centering
  \begin{subfigure}[b]{0.23\textwidth}
    \includegraphics[width=3cm]{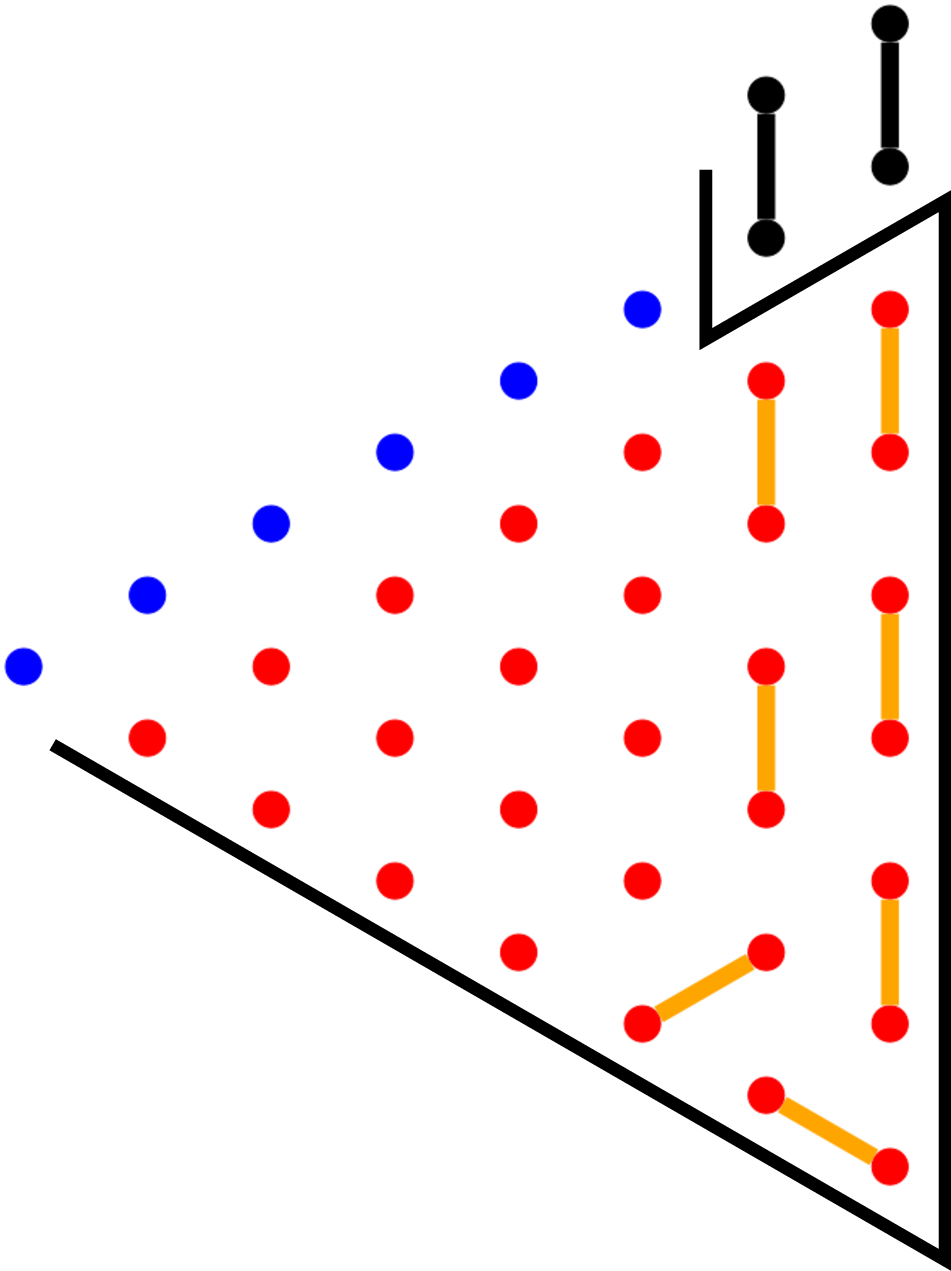}
    \caption{$n$ is odd, full}
  \end{subfigure}
  \hfill
  \begin{subfigure}[b]{0.23\textwidth}
    \includegraphics[width=3cm]{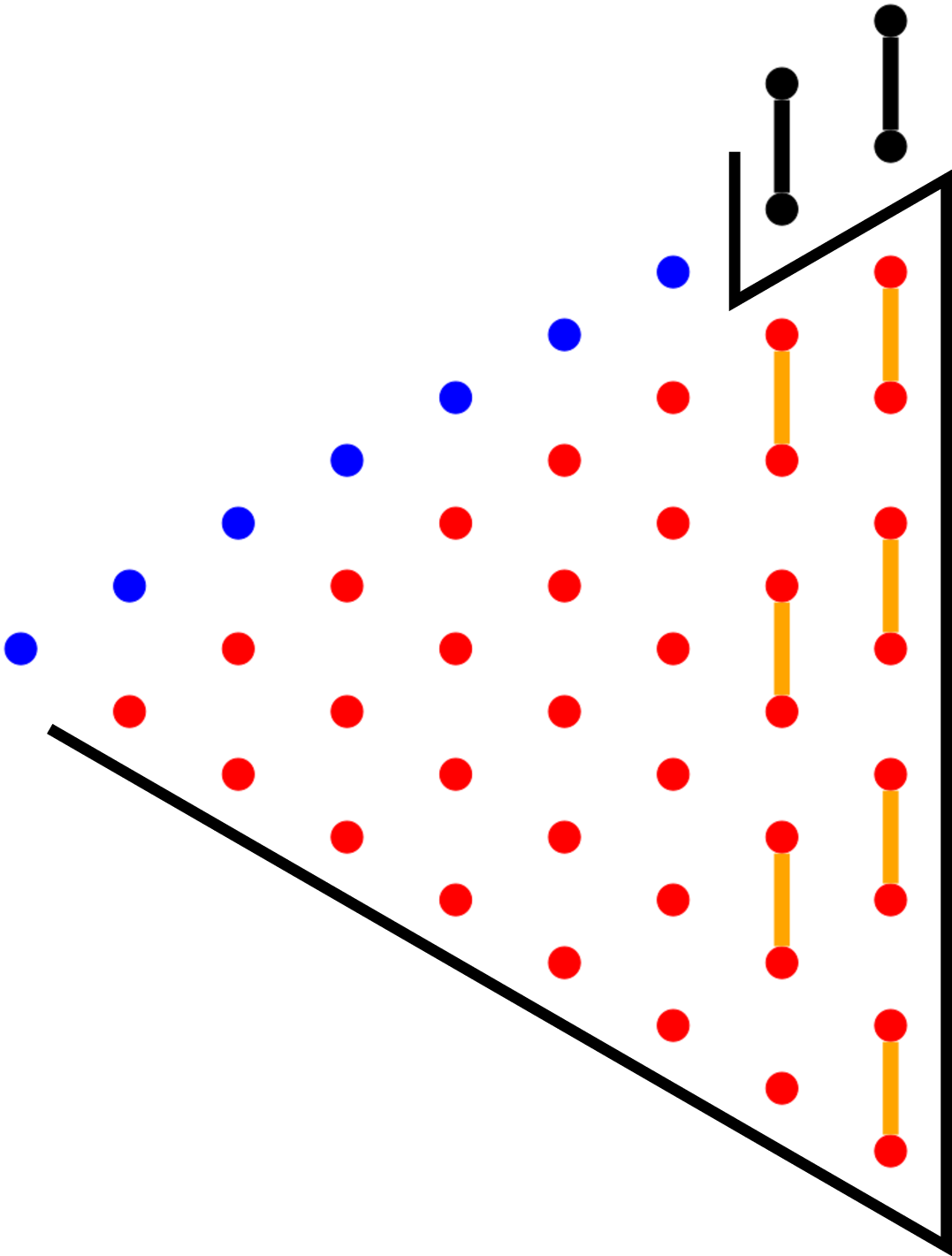}
    \caption{$n$ is even, full}
  \end{subfigure}
  \hfill
  \begin{subfigure}[b]{0.23\textwidth}
    \includegraphics[width=3cm]{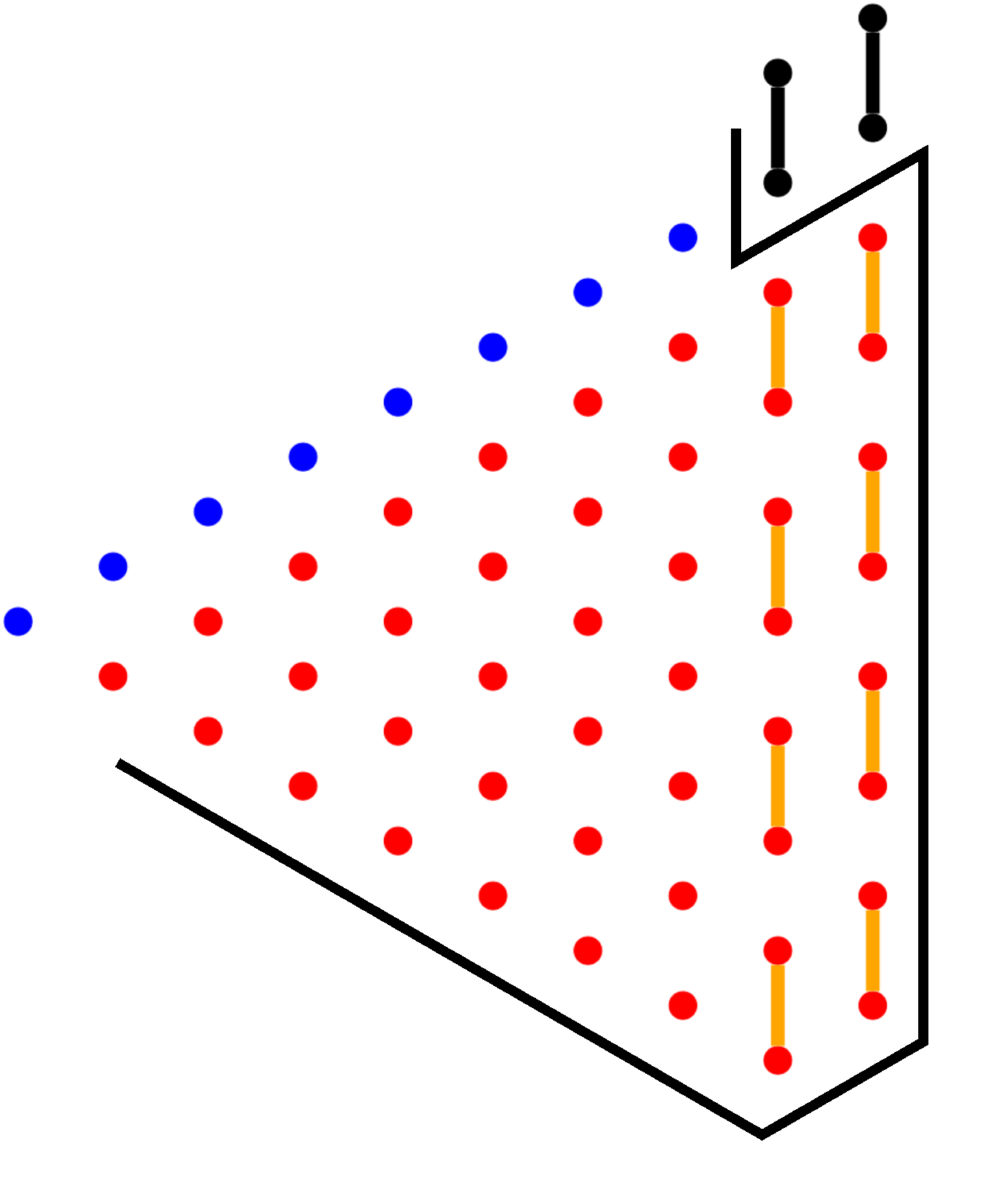}
    \caption{$n$ is odd, clipped}
  \end{subfigure}
  \hfill
  \begin{subfigure}[b]{0.23\textwidth}
    \includegraphics[width=3cm]{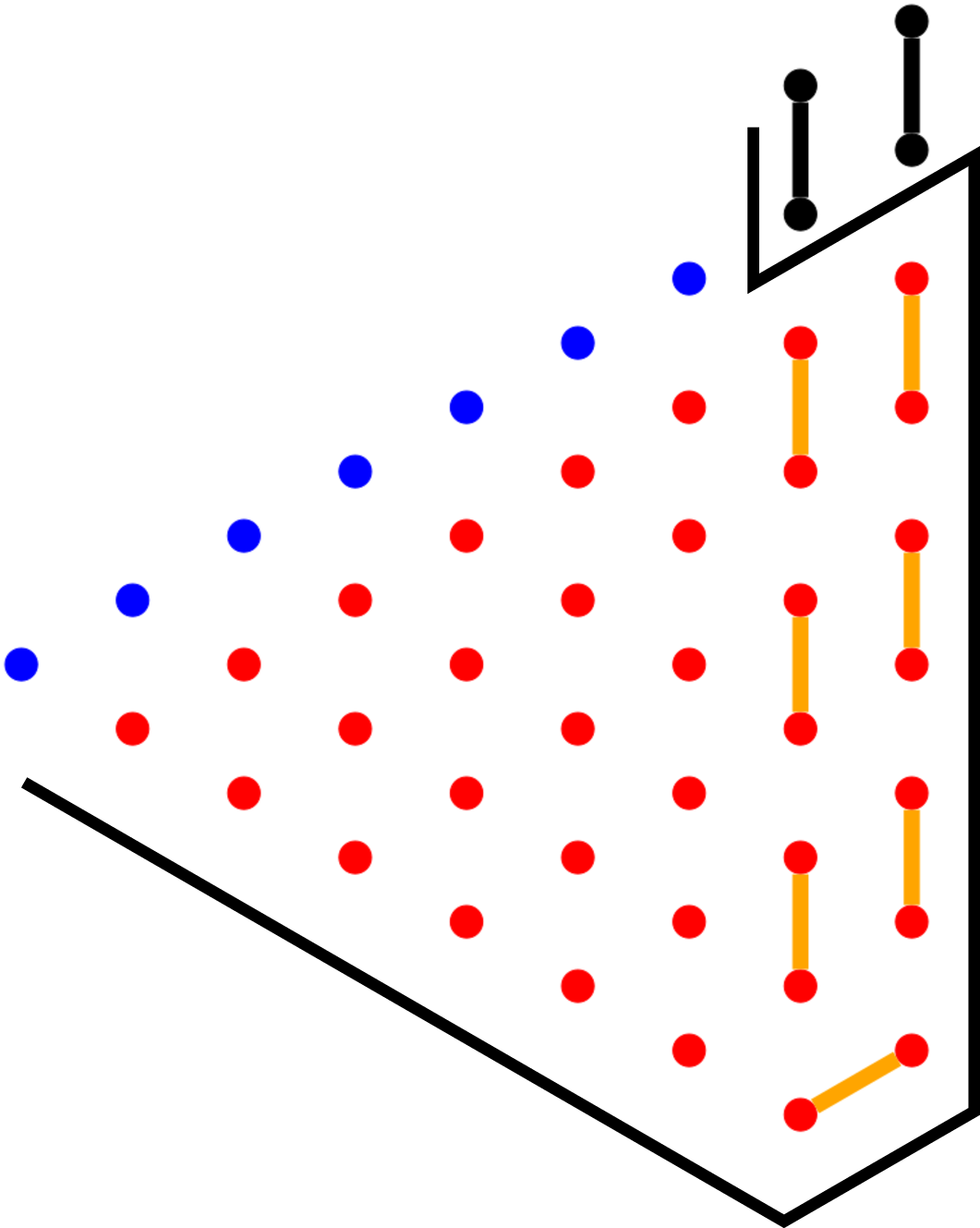}
    \caption{$n$ is even, clipped}
  \end{subfigure}
  
  \caption{The fixed regions we need to check for the bottom corner. The colors have the same meaning as \Cref{fig:inductive-gadget}. The number $n$ is the side length of the original triangle.}
  \label{fig:bottom-gadgets}
\end{figure}

\begin{Lemma}
\label[lemma]{lemma:bottom-corner}
Consider the four regions in \Cref{fig:bottom-gadgets}.  A ``permitted tiling'' of such a region is a dimer tiling that covers all of the red nodes, any subset of the blue nodes, and none of the black nodes.  For each of the four regions in \Cref{fig:bottom-gadgets}, every permitted tiling of that region can be transformed by a series of ReCom moves into a tiling that contains the orange tiles shown.
\end{Lemma}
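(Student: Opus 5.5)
Like \Cref{lemma:inductive-gadget}, this is a statement about four specific finite graphs, so I will prove it by exhaustive computer search rather than by a structural argument. For each region in \Cref{fig:bottom-gadgets}, I first enumerate every permitted tiling: every dimer tiling of the subgraph induced by the red nodes together with some subset of the blue nodes, covering all red nodes and no black nodes. I do this by backtracking. Process the nodes in a fixed order. At each step, take the first red node that is still uncovered and try pairing it with each uncovered red or blue neighbor that does not cross the hard boundary. In addition, each blue node may either be left uncovered or be paired with an uncovered red or blue neighbor. Since the four regions are small and of fixed size, this enumeration terminates with a finite list of permitted tilings for each region.

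Next, I build the restricted metagraph on these permitted tilings. Two permitted tilings are adjacent if one can be obtained from the other by merging two adjacent tiles (a connected 4-node set) and re-splitting the merged set into two dominoes. I only allow moves whose node set lies in the red and blue nodes, so that both the move and its result make sense inside the full even triangle: blue nodes are covered either by tiles of the region or by tiles outside it, and a move inside the region leaves the outside untouched. Then I run a single multi-source breadth-first search, started from all permitted tilings that contain the orange tiles. The lemma holds for that region exactly when this search reaches every permitted tiling. The BFS tree also records an explicit sequence of ReCom moves for each tiling, so the search produces a certificate.

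The main obstacle is interpreting the blue nodes correctly, so that the finite check really implies the claim inside the triangle. A move that covers a blue node is only legal in the triangle if that node is not already covered by an outside tile. So I want to apply ReCom moves only among tiles that are entirely inside the red and blue nodes, and to treat the set of covered blue nodes as given by the current tiling. If the plain check fails for some region because it lacks room near the corner, I would first enlarge the blue layer slightly, and only then resort to a case split on the tiles straddling the boundary. Presumably the figures were chosen so that the plain check succeeds. I also have to handle the parity and clipping differences among the four cases. These are already built into the four drawn regions, so I run the identical procedure on each, and verification completes the lemma.
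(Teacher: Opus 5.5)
Your proposal is correct and is essentially the paper's proof: an exhaustive computer check of each of the four finite regions, done exactly as for \Cref{lemma:inductive-gadget}. The only organizational difference is that the paper first enumerates the covers (the red nodes plus an even-size subset of the blue nodes) and checks each cover's metagraph separately and in parallel. This is equivalent to your single multi-source search, because a ReCom move never changes the set of covered nodes; the same observation also removes your worry that a move might newly cover a blue node already used by a tile outside the region.
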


\begin{proof}
    Via exhaustive case analysis using a computer, just as for \Cref{lemma:inductive-gadget}.
\end{proof}

Once we have completed the two rightmost columns of the triangle using \Cref{lemma:bottom-corner}, the remaining nodes form an \ctriangle of side-length $n-2$. Thus, we have reduced to the side-length $n-2$ case of \Cref{thm:main_result_triangular_lattice}, and the proof is complete by induction. See \Cref{sec:tile-2-inductive-proof} for the full details of the proof.

\subsection{Checking metagraphs of covers}
\label{sec:computer-search}

Throughout our proof, we consider several subsets of the triangle to work in,
and we want to show that all possible tilings of a chosen fixed region can be
transformed via ReCom moves to satisfy a particular tiling constraint.  To this end, we use
computer-assisted search.

To perform our case analysis, we follow a semi-na\"ive algorithm. Given any permitted tiling of \Cref{fig:inductive-gadget} (or any other fixed region), its ``cover'' is the set of nodes that are tiled. The first step in our algorithm is to generate all the possible covers. These are simply the sets of even cardinality that are the union of the red nodes with some subset of the blue nodes. Then, we generate and check the metagraphs of all the covers
in parallel. The success condition is that for every cover, each connected component of its metagraph must contain a tiling that includes the orange tiles. We also cache ReCom moves and optimize the way we represent tilings
in memory, as our process is heavily memory-bound.  The largest region we checked
(the inductive region in \Cref{fig:inductive-gadget}) took approximately 17 days
using 32 cores on an ARM Neoverse-N1 processor, and 1 TB of DDR4 memory to check
approximately 140 billion tilings. Our implementation is available at
\url{https://github.com/maemre/triforce}. In addition, we provide pseudocode in \Cref{sec:pseudocode}.

We note that essentially the full size of the fixed region in \Cref{fig:inductive-gadget} is necessary in order for \Cref{lemma:inductive-gadget} to hold. Our experimentation found counterexample configurations on slightly smaller fixed regions. \Cref{fig:failing-cover} shows one such example:
The solid trapezoid is a slightly smaller fixed region, and the black and the red nodes together form a cover of this trapezoid. The partial tiling of red nodes shown in the figure is locked, and the only freedom of ReCom moves lies within the black nodes. (There are 4 possible ways to tile the black nodes and complete the partial tiling.) Adding the circled nodes at the bottom yields the fixed region in \Cref{fig:inductive-gadget}. Extending the tiling to cover these circled nodes would introduce more ReCom opportunities, so that we could put together a chain of ReCom moves to create two vertical tiles at the top.

\begin{figure}[h]
\centering
\includegraphics[width=4cm]{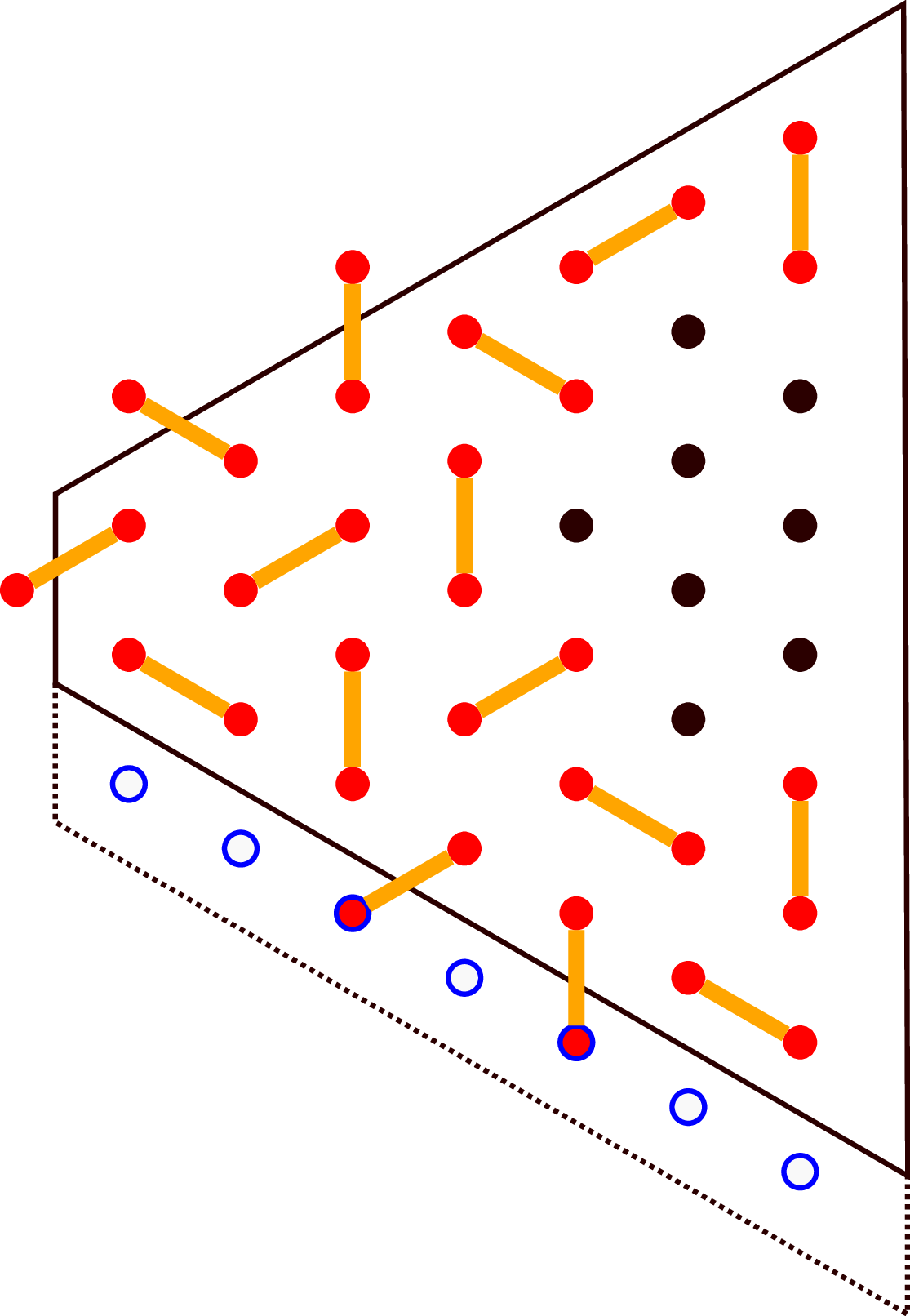}
\vspace{4pt}
\caption{A slightly smaller fixed region (the solid trapezoid) that has a cover with an almost-locked tiling. The given tiling of the red nodes is locked, so that any choice of tiles to cover the black nodes produces a counterexample to a modified version of \Cref{lemma:inductive-gadget} in which the solid trapezoid above replaces \Cref{fig:inductive-gadget} as the fixed region.  Adding the circled nodes to the solid trapezoid yields the region in \Cref{fig:inductive-gadget}.}
\label{fig:failing-cover}
\end{figure}

\subsection{Base cases: up to side-length 9} \label{sec:base-cases}

We use the computer program to directly check the metagraph of the \ctriangles
up to and including side-length 9. There are some irregularities for side-lengths 1--4:
\begin{itemize}
\item The \ctriangle of side-length 1 is just the empty graph, so its metagraph
  is vacuously connected.
\item The \ctriangle of side-length 2 is just a single 2-tile, so it has only
  one tiling. The metagraph is trivially connected.
\item The \ctriangle of side-length 3 has two tilings that are disconnected in
  the metagraph. These tilings are shown in \Cref{fig:triangle-3-metagraph}.
\item The metagraph of the \ctriangle of side-length 4 has three connected
  components that are rotations of each other. The metagraph is shown in
  \Cref{fig:triangle-4-metagraph}.
\end{itemize}

The metagraphs for side-lengths 5--9 are all connected.

\begin{figure}[h]
  \centering
  \includegraphics[width=2cm]{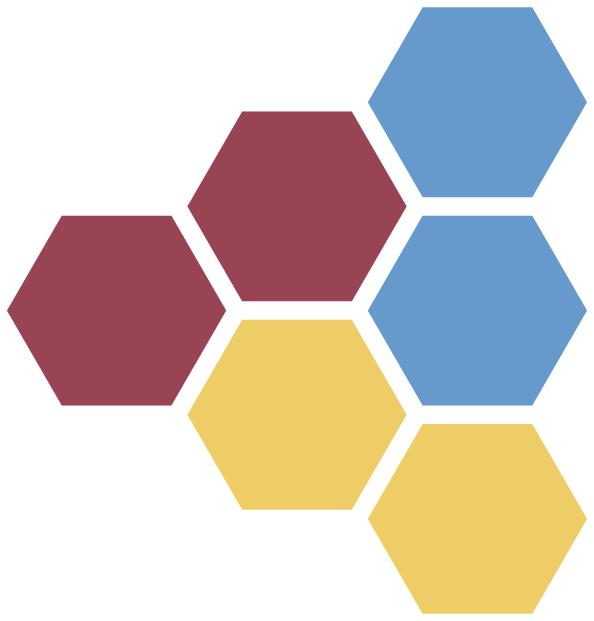}
  \hspace{2em}
  \includegraphics[width=2cm]{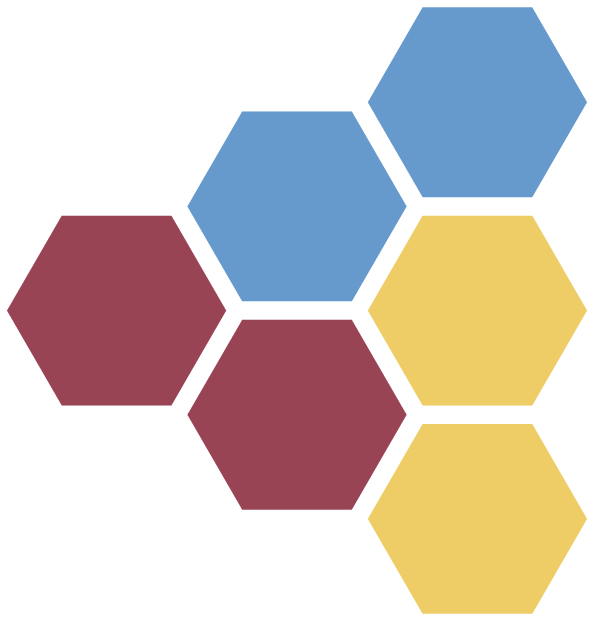}
  \caption{The metagraph for the triangle of side-length 3, also noted in \cite{cannon_irreducibility}. The two tilings are disconnected.}
  \label{fig:triangle-3-metagraph}
\end{figure}

\begin{figure}[h]
  \centering
\begin{tikzpicture}[
    pic/.style={inner sep=1pt, draw, rounded corners},
    node distance=8mm and 12mm
]
    \node[pic] (1a) {\includegraphics[width=2cm]{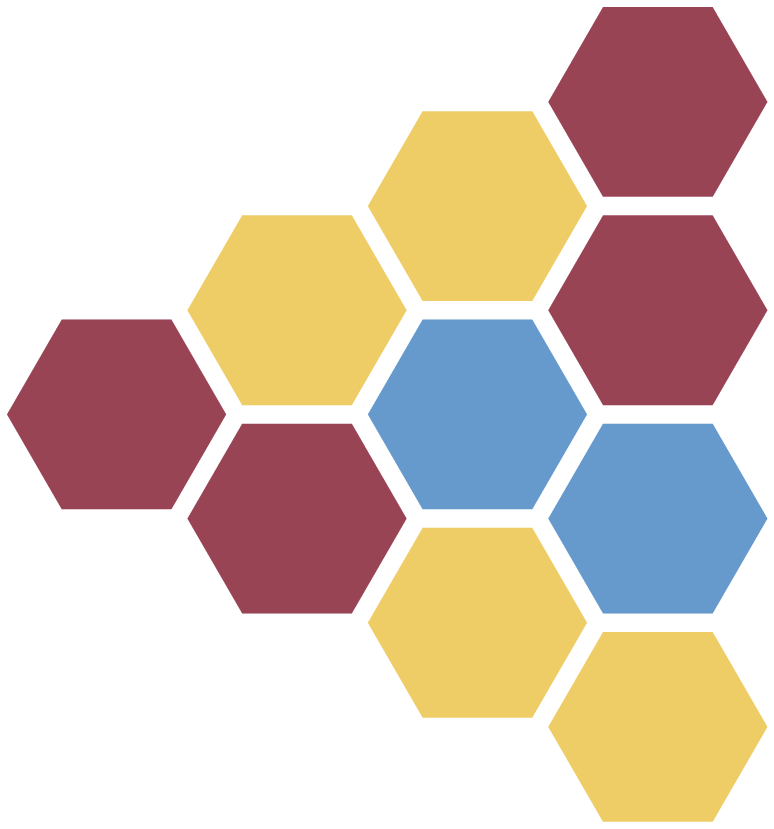}};
    \node[pic, below=of 1a] (1b) {\includegraphics[width=2cm]{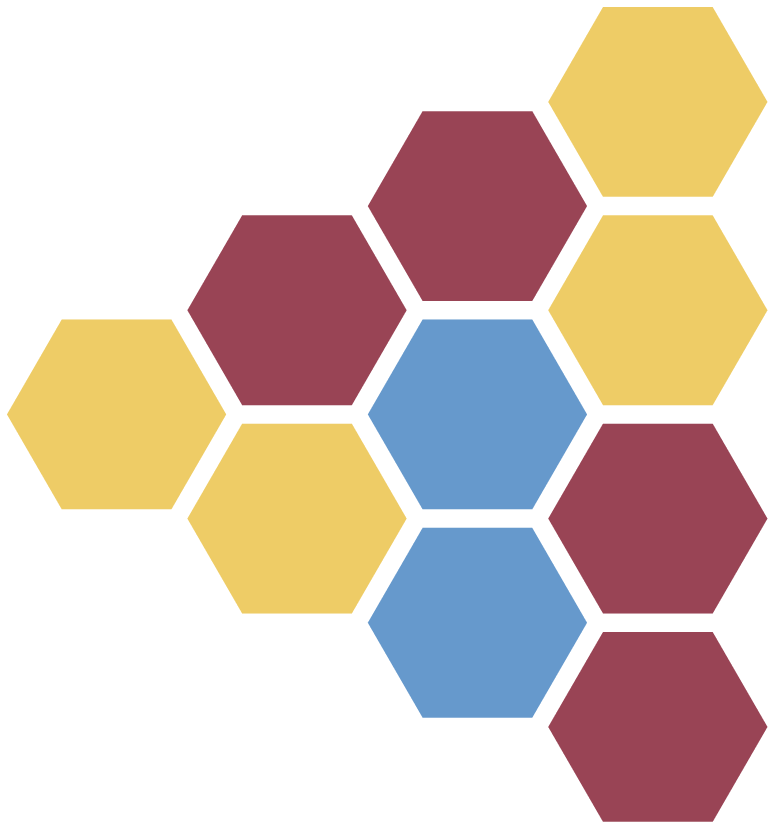}};

    \node[pic, right=of 1a, xshift=1cm] (2a) {\includegraphics[width=2cm]{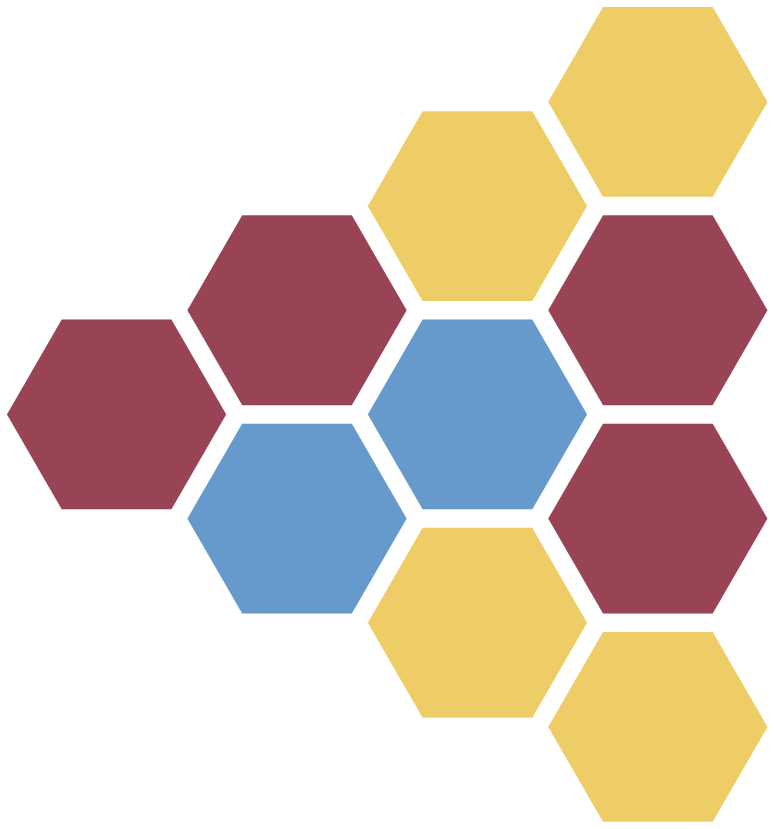}};
    \node[pic, below=of 2a] (2b) {\includegraphics[width=2cm]{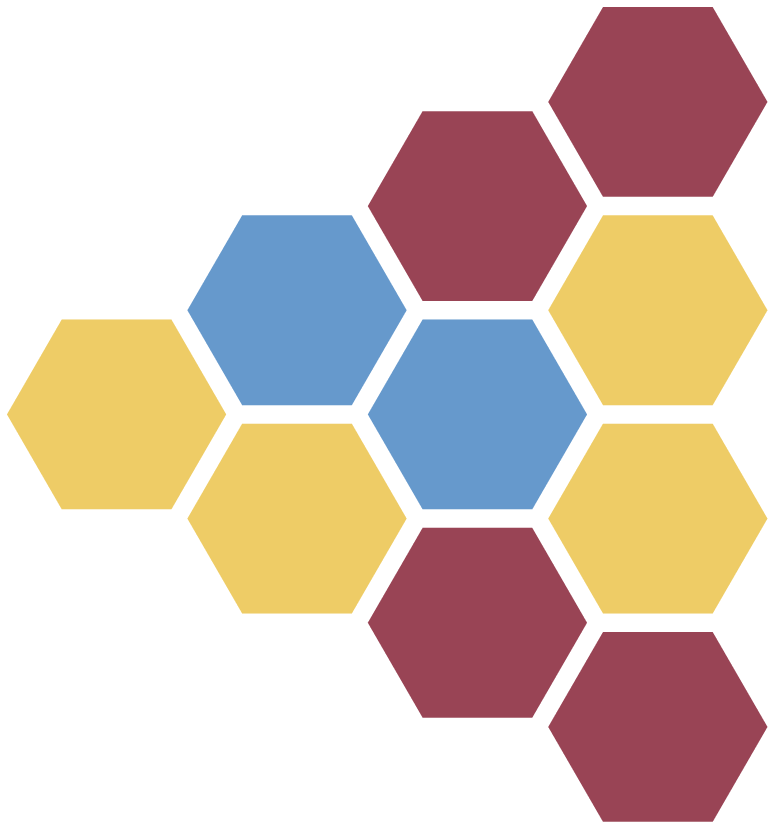}};

    \node[pic, right=of 2a, xshift=1cm] (3a) {\includegraphics[width=2cm]{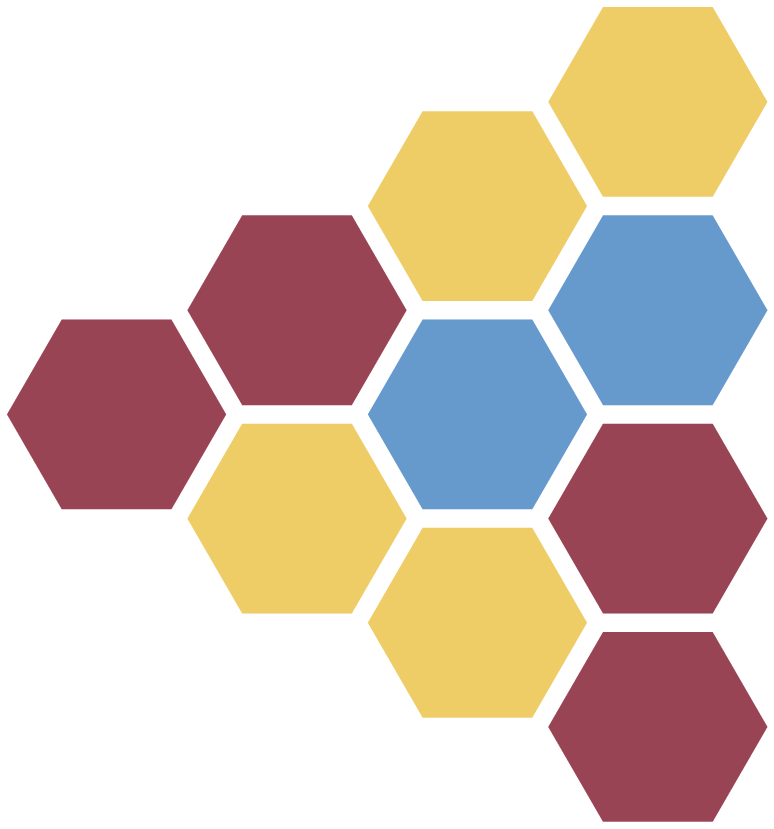}};
    \node[pic, below=of 3a] (3b) {\includegraphics[width=2cm]{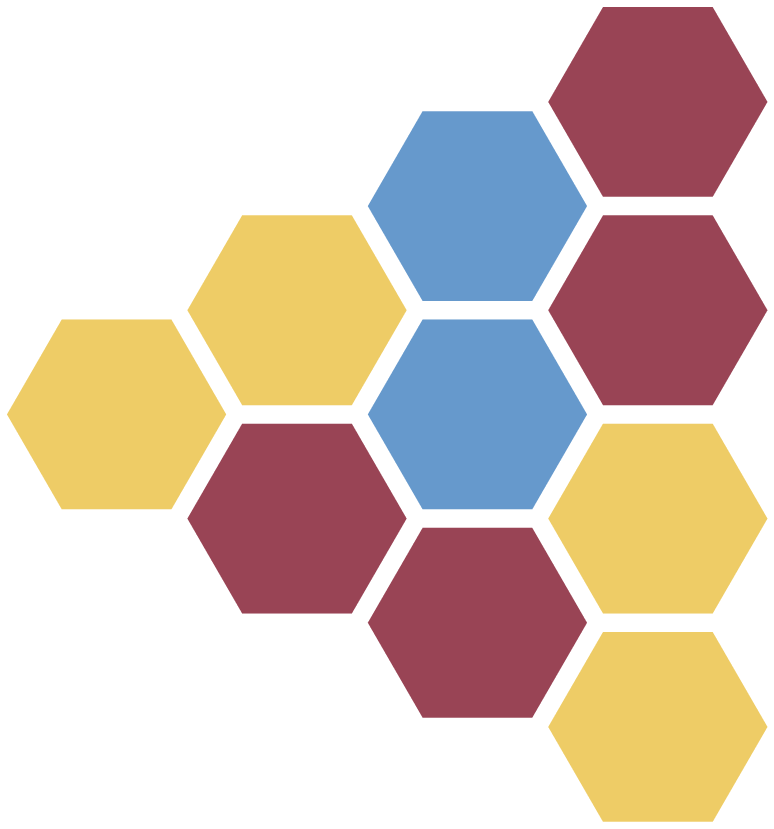}};

    \draw (1a) -- (1b);
    \draw (2a) -- (2b);
    \draw (3a) -- (3b);
\end{tikzpicture}
\vspace{8pt}
  \caption{The metagraph for the triangle of side-length 4. There are three connected components of size 2.}
  \label{fig:triangle-4-metagraph}
\end{figure}

\subsection{Proof of \Cref{thm:main_result_triangular_lattice}}
\label{sec:tile-2-inductive-proof}

As the base cases were verified in \Cref{sec:base-cases}, we consider an \ctriangle of side length $n>9$.

We prove that the dimer tiling metagraph on this \ctriangle is connected by starting from an arbitrary dimer tiling and applying ReCom moves to
reach a specific target tiling. The target tiling is described as follows: (1) the bottom side (from the left vertex to the bottom-right vertex) is made up of repeated copies of the same specific 4-triangle, namely the triangle of side-length 4 that is on
the left corner of the larger triangle in \Cref{fig:sample_target8}, and (2) the rest
of the triangle is tiled using only vertical tiles.  An example of this tiling
for the 8-triangle is in \Cref{fig:sample_target8}.

\begin{figure}[h]
\centering
  \includegraphics[width=3cm]{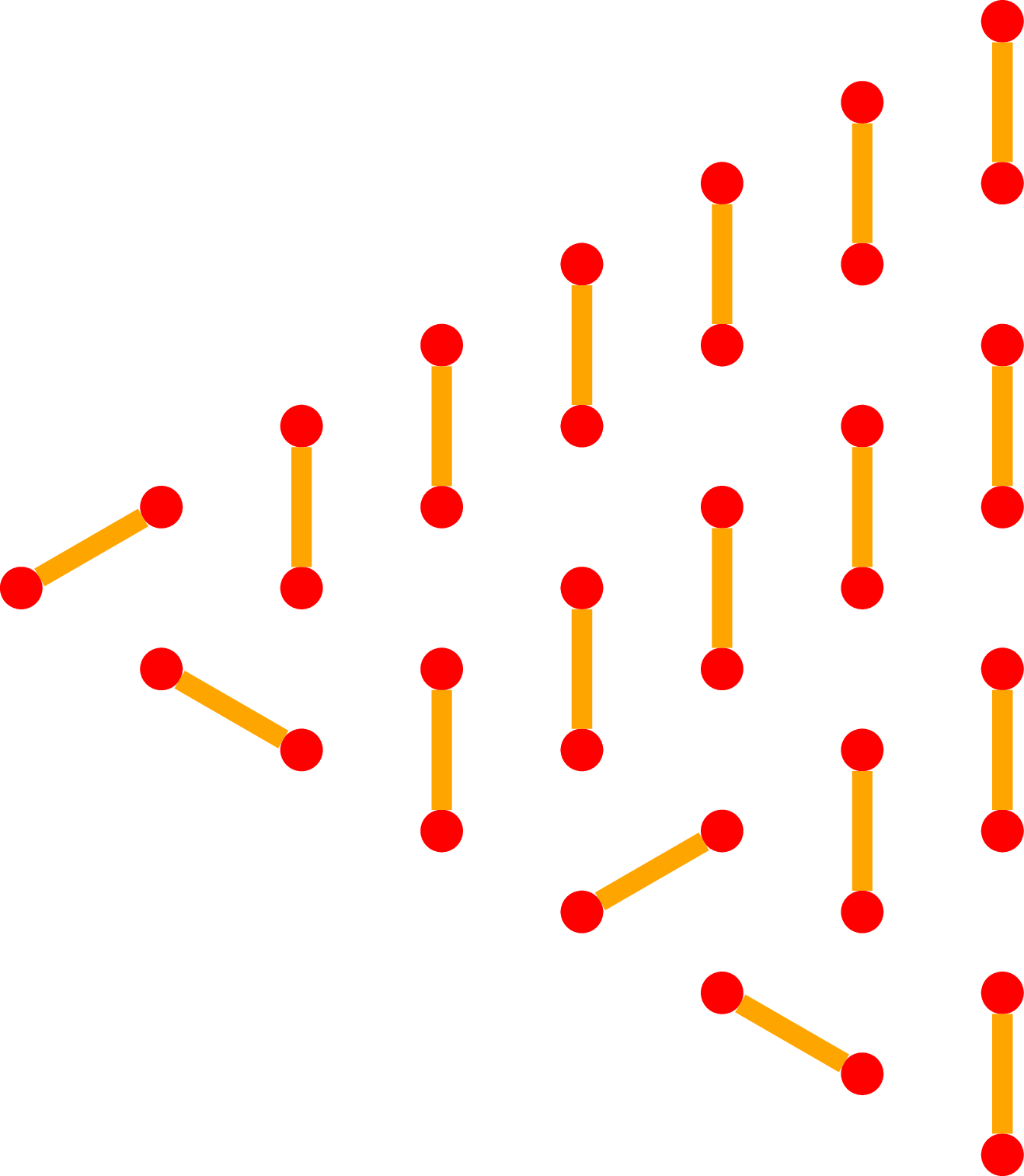}
  \vspace{8pt}
  \caption{Sample target tiling for $n = 8$.}
  \label{fig:sample_target8}
\end{figure}

This tiling can be extended to an arbitrary side length $n$ by (1) repeating the
4-triangle pattern at the bottom until we get to the next multiple of 4 after
$n$, (2) tiling the remaining cells using vertical tiles, and (3) removing the
right-hand columns until we get back down to $n$.  Notice that this process yields a full
triangle when $n \equiv 0, 3 \pmod 4$ and a clipped triangle when $n \equiv 1, 2 \pmod 4$.

Fix $n > 9$. By induction, and using the base cases from \Cref{sec:base-cases}, we may assume that the metagraph is connected
for an \ctriangle of side-length $n - 2$.  We will show that we can arrange the
\ctriangle of side-length $n$ to have two vertical columns on the right edge using
ReCom moves. Then, by the inductive hypothesis, the residual \ctriangle can be arranged into
the configuration we want to reach.

We build these two columns using a second inductive argument.  \Cref{lemma:inductive-gadget} states that
any covering of the red region in \Cref{fig:inductive-gadget} can be rearranged
to have two vertical tiles at the top corner.  This includes all the covers that
fit in the top corner of the large \ctriangle of side-length $n > 9$. Thus, by \Cref{lemma:inductive-gadget}, we can use ReCom moves to get two vertical tiles at the top corner of the large \ctriangle.

We next move the \Cref{fig:inductive-gadget} region downward by two cells in the large \ctriangle and use \Cref{lemma:inductive-gadget} again to get two more vertical tiles just below the first two. We keep sliding down the right edge two cells at a time, repeatedly using \Cref{lemma:inductive-gadget} to build two columns of vertical tiles. This continues until we get too close to the bottom for \Cref{fig:inductive-gadget} to fit. Then, we use \Cref{lemma:bottom-corner} to complete the two rightmost columns. They contain only vertical tiles except for a piece at the very bottom that matches the pattern shown in \Cref{fig:sample_target8}.

The remaining tiles cover a residual \ctriangle of side-length $n-2$, whose metagraph is connected by the inductive hypothesis. We make the appropriate ReCom moves so that the residual \ctriangle, and therefore the original \ctriangle of size $n$, matches the target tiling. \Cref{fig:sample_11_to_9} illustrates and further explains this step of the proof in the case $n=11$.
Since we started from an arbitrary tiling of the \ctriangle of size $n$ and reached the specific target, we conclude that the metagraph for
the side-length $n$ \ctriangle is connected.

\begin{figure}[h]
\centering
  \includegraphics[width=4cm]{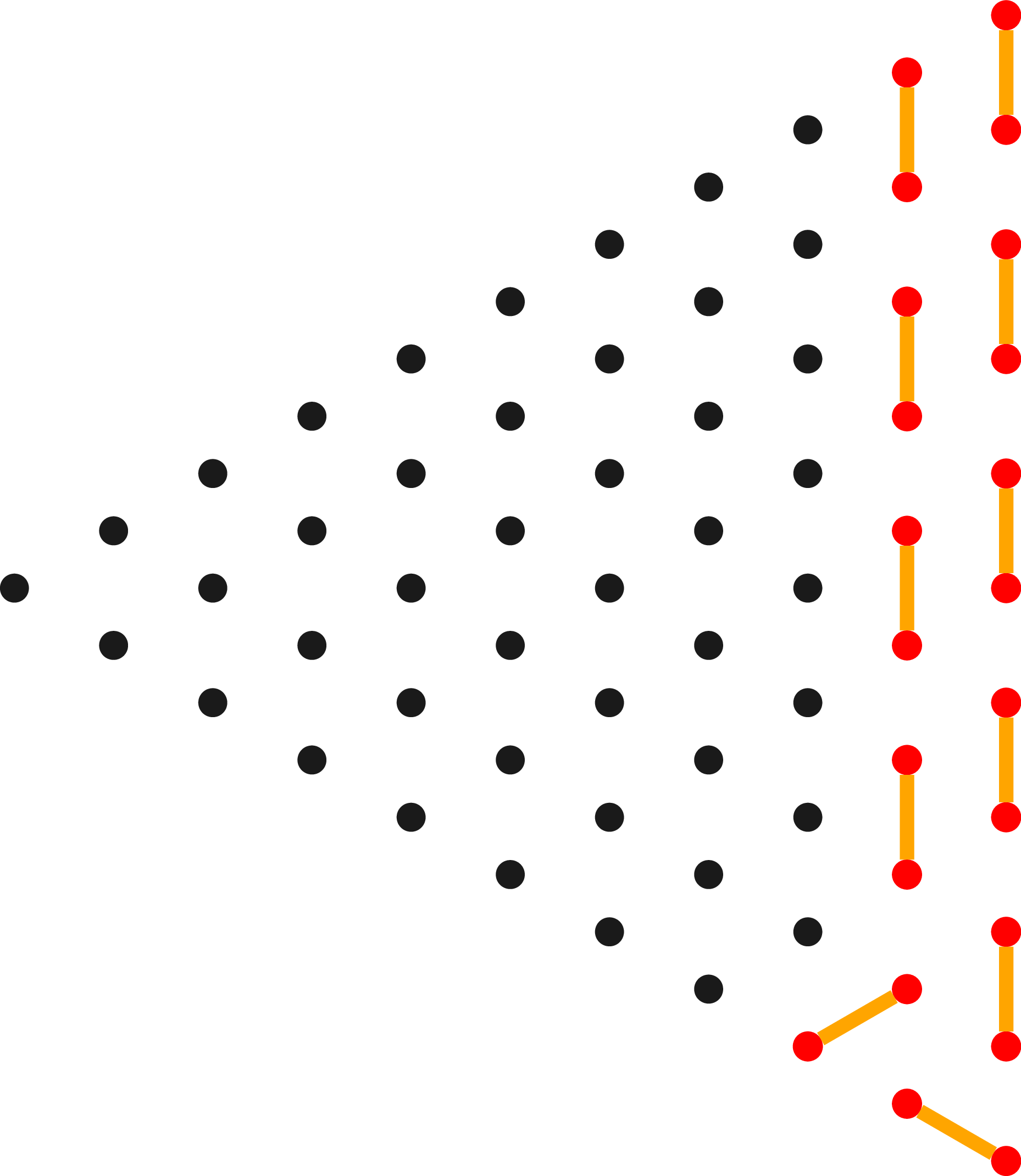}
  \vspace{8pt}
  \caption{A sample induction step in the case $n=11$. Starting with any dimer tiling of the \ctriangle of side-length $n$, the argument proceeds as follows. (1) Using \Cref{lemma:inductive-gadget,lemma:bottom-corner}, perform ReCom moves to get the orange tiles shown. (2) The remaining tiles cover the black nodes, which are precisely an \ctriangle of side-length $n-2$. This residual \ctriangle has a connected metagraph by the inductive hypothesis. Perform ReCom moves on the residual \ctriangle so that its tiles match the target tiling. (3) Looking together at the residual \ctriangle and the orange tiles shown, the entire tiling on the side-length $n$ \ctriangle matches the target tiling.}
  \label{fig:sample_11_to_9}
\end{figure}

\begin{figure}[h]
\centering
  \begin{subfigure}[b]{0.45\textwidth}
  \includegraphics[width=4cm]{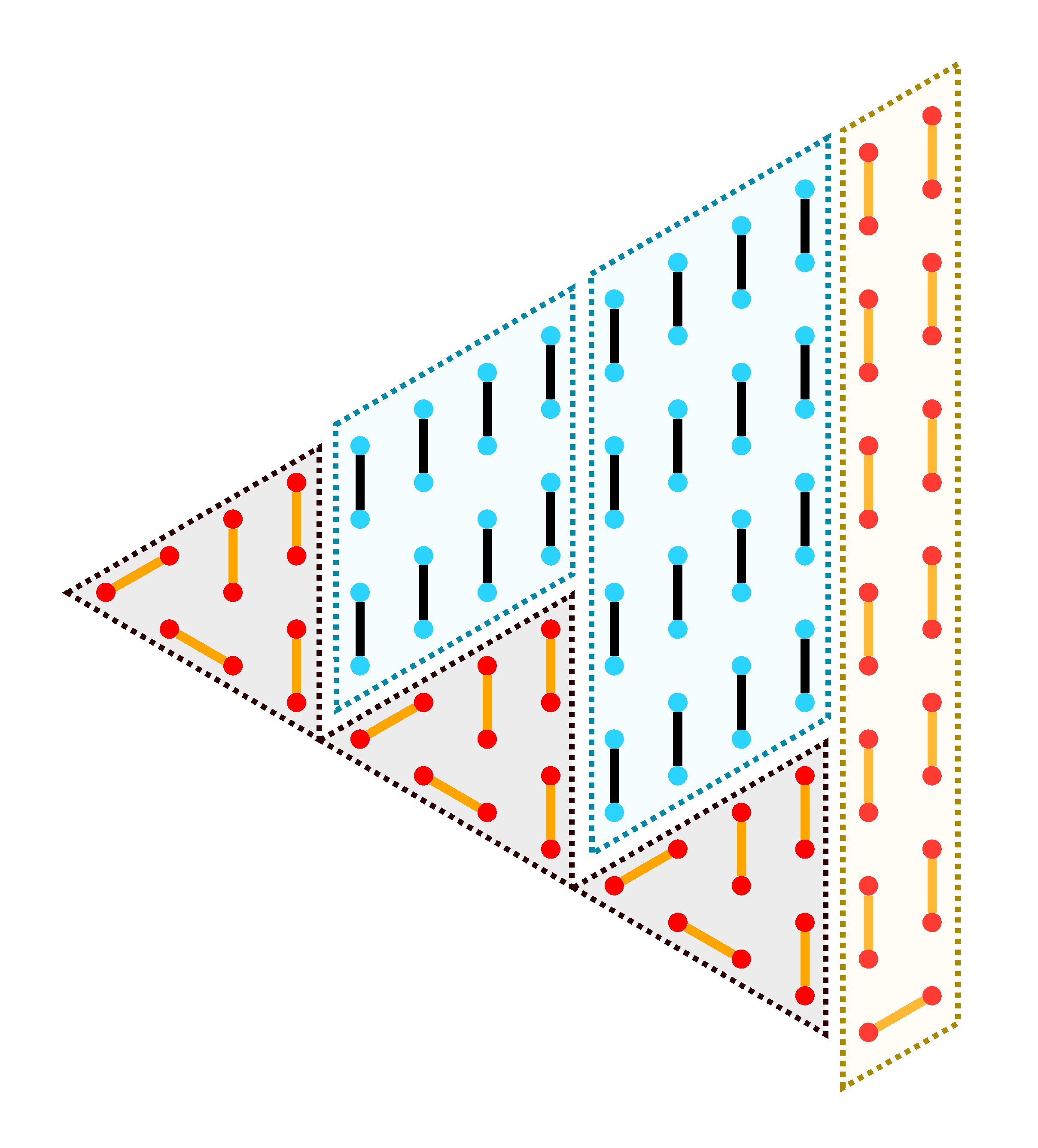}
  \caption{}
  \label{fig:target-14}
  \end{subfigure}
  \hfill
  \begin{subfigure}[b]{0.45\textwidth}
  \includegraphics[width=4cm]{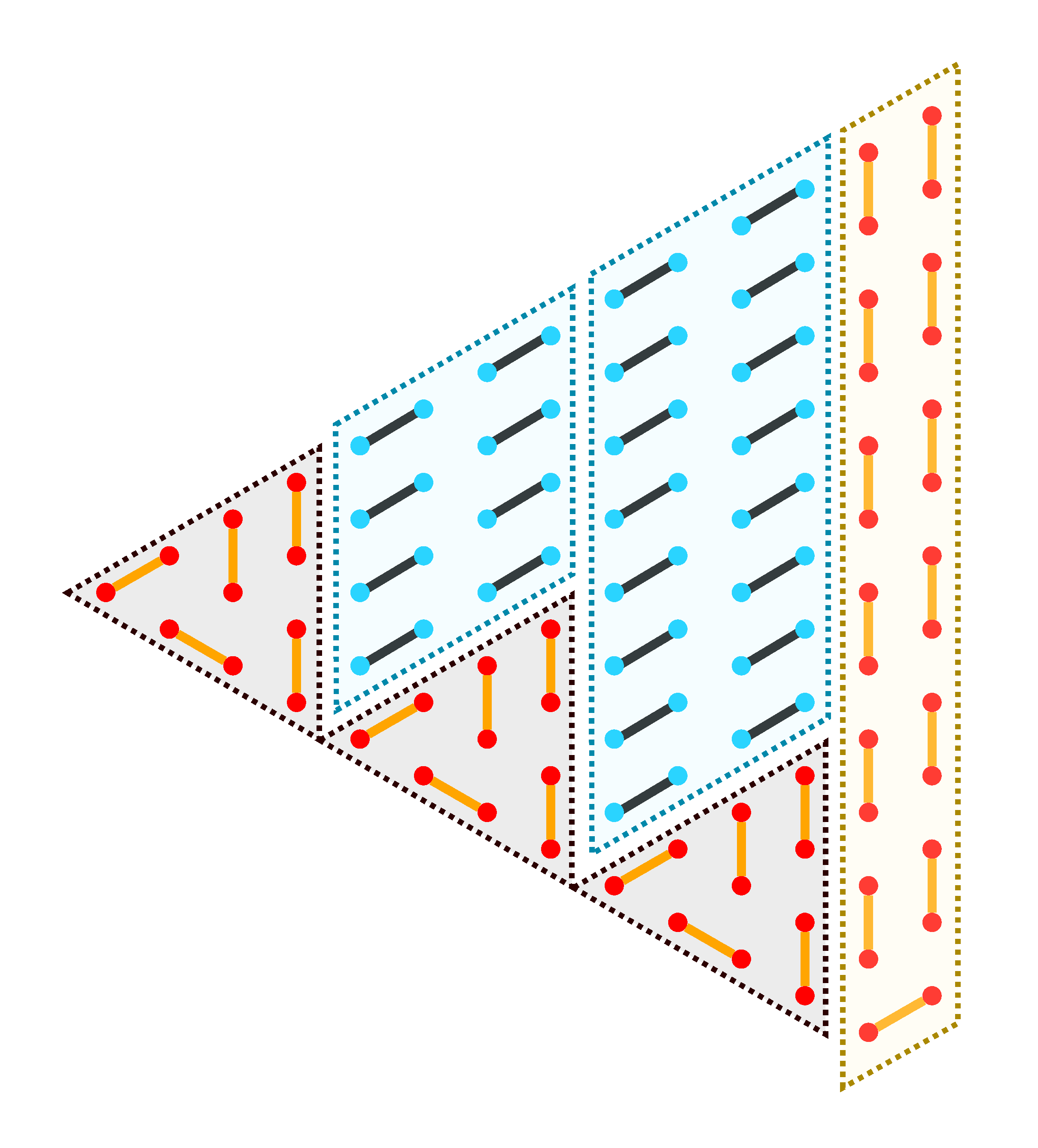}
  \caption{}
  \label{fig:other-14}
  \end{subfigure}
  \caption{Two sample tilings of the \ctriangle with side length $n = 14$. Tiling (a) is the target of the inductive method in our proof, and tiling (b) differs from it in all the tiles inside the parallelograms.  These two tilings are at distance $\Omega(n^2)$ from each other in the metagraph.}
  \label{fig:diameter-lower-bound-example}
\end{figure}

To show that the metagraph's diameter is in $\Theta(n^2)$, we separately show that it is in $O(n^2)$ and $\Omega(n^2)$:
\begin{itemize}
    \item \emph{The diameter is in $O(n^2)$:} The argument above shows how to get from any tiling of the side-length $n$ \ctriangle to the target tiling (e.g. in \Cref{fig:sample_target8}) via ReCom moves. To arrange the two rightmost columns of the \ctriangle into vertical tiles (with the bottom matching the pattern in \Cref{fig:sample_target8}) requires $O(n)$ invocations of \Cref{lemma:inductive-gadget} and one invocation of \Cref{lemma:bottom-corner}. Each invocation of \Cref{lemma:inductive-gadget} or \Cref{lemma:bottom-corner} uses $O(1)$ ReCom moves. Thus, it takes $O(n)$ ReCom moves to construct the two rightmost columns of the target tiling. Proceeding by induction, it takes $O(n^2)$ ReCom moves to reach the target tiling on the full \ctriangle.
    
    \item \emph{The diameter is in $\Omega(n^2)$:} Below, we exhibit a pair of tilings that differ by $\Omega(n^2)$ tiles. Since each ReCom move affects only two tiles, it takes $\Omega(n^2)$ moves to get from one tiling to the other. This shows that the diameter is in $\Omega(n^2)$.

    \Cref{fig:diameter-lower-bound-example} shows the two tilings in the case $n=14$. The first tiling, \Cref{fig:target-14}, is the target tiling that we previously used when proving that the metagraph is connected. It is divided into three regions: the repeating pattern of triangles at the bottom (shaded gray), the vertical tiles lying above those triangles (shaded blue), and the extra tiles at the right (shaded yellow). The blue region of vertical tiles is subdivided into parallelograms of width 4 and increasing heights.

    The second tiling, \Cref{fig:other-14}, is the same as \Cref{fig:target-14} except that all the tiles in the blue parallelograms have been rotated so that they are parallel to the top and bottom sides of the parallelogram. To find the number of differing tiles between \Cref{fig:target-14,fig:other-14}, we count the total number of tiles in the parallelograms. There are $\lfloor n/4 \rfloor - 1$ parallelograms, and the $i$-th parallelogram contains $8i$ tiles. Thus, the total number of tiles in the parallelograms is
    \[
        \sum_{i=1}^{\lfloor n/4 \rfloor - 1} 8i = 8 \cdot \frac{(\lfloor n/4 \rfloor - 1) \lfloor n/4 \rfloor}{2} \in \Omega(n^2).
    \]
    We conclude that the two tilings differ by $\Omega(n^2)$ tiles, so the distance between them in the metagraph is $\Omega(n^2)$. \qed
\end{itemize}

\section{Locked configurations with few large districts}\label{sec:locked_few_districts}

In this section, we construct an infinite family of six-district locked configurations on dual graphs that are triangulations. \Cref{fig:locked-6} shows the result of the construction.

\begin{Theorem}\label{thm:locked-6}
There is an infinite family of dual graphs $(G_k)_{k \geq 6}$ with the following properties:
\begin{itemize}
\item $G_k$ is a triangulation with maximum degree 8.
\item $G_k$ has $12k^2$ nodes.
\item $G_k$ has a locked configuration of 6 districts with $2k^2$ nodes each.
\end{itemize}
\end{Theorem}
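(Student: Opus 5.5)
The proof will be an explicit construction, carried out in three steps and then verified: first a partition of a region of the triangular lattice into six districts with the district graph of \Cref{fig:district-graph}, then a local subdivision of the lattice to fix populations, and finally a check that every pairwise merge can only be re-split into the original two districts.

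\textbf{Step 1: the coarse layout.} I start from a hexagonal region of the triangular lattice with side lengths alternating $2k+1, 2k-1$. I partition it into the six districts of \Cref{fig:locked-6}. The three ``outer'' districts (light blue, purple, yellow) each consist of a solid parallelogram block near a corner together with a thin arm. The three ``inner'' districts (red, green, dark blue) are snake-shaped paths that wind between the arms. Every district borders exactly the four others prescribed by \Cref{fig:district-graph}, and every shared border between two districts is a long interleaving of one-cell-wide strips.

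\textbf{Step 2: equalizing populations.} On the raw triangular lattice the snakes and blocks will not all have equal size. Following the remark that the construction works with populations in $\{1,2\}$, I replace each would-be population-$2$ node by a small subdivided gadget. Concretely, I insert an extra vertex on an edge or inside a face and retriangulate, so that the graph stays a triangulation with the new vertices assigned to the appropriate district. I choose the pattern of subdivisions periodically so that the maximum degree stays at most $8$: each lattice vertex has degree $6$ and gains at most two new neighbours. I then count nodes region by region, verifying a total of $12k^2$ and $2k^2$ per district; this is a routine sum over the parallelogram blocks and snake lengths as functions of $k$. The restriction $k\ge 6$ should be exactly what makes the arms long enough to realize the layout.

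\textbf{Step 3: lockedness (the main obstacle).} By symmetry under the $120^\circ$ rotation of the layout, only a few merge types need checking, for example outer--outer, outer--inner and inner--inner; each district has four neighbours, giving $12$ edges of \Cref{fig:district-graph} in total. For each merged pair $A\cup B$, I must show that the only split into two connected parts of size $2k^2$ is $\{A,B\}$.

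My approach is a ``forced growth'' argument. Each merged region consists of long strips of width one, alternately belonging to $A$ and $B$, attached to thicker cores. A connected part containing a point deep in one strip must either take the whole strip or cut it. Cutting a width-one strip disconnects its far end, which must then be absorbed by the other part. I would show that every such absorption overshoots or undershoots the target size $2k^2$ by a fixed positive amount. The tool is a counting invariant: the arm lengths are chosen so that any connected subset containing the tip of a given arm and having size exactly $2k^2$ is forced to coincide with the original district.

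The delicate part is the outer--outer merges, where a solid parallelogram offers many ways to carve out a connected set. There I would argue that the complement must remain connected. Any connected set of the right size that misses part of the other district's snake arm therefore has to separate a tip of that arm, and so fails to be valid. I expect this case analysis, together with keeping degree at most $8$ in Step 2, to be the bulk of the work.
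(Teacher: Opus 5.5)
There is a genuine gap, and it sits where you defer the work. You treat Step~2 (adding subdivision vertices ``periodically'' to balance populations and keep degree $\le 8$) as bookkeeping independent of lockedness. Then in Step~3 you posit that ``the arm lengths are chosen so that'' every connected set of size $2k^2$ through an arm tip is the original district. That choice is the whole content of the theorem, and a balancing chosen only for equal sizes and bounded degree will in general \emph{not} be locked.

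The missing idea is the triple-point analysis. Red, yellow and light blue meet at a point. Each of these districts is its triple cell plus two width-one paths, of lengths $r_1,r_2$, $y_1,y_2$, $b_1,b_2$ respectively. Merging two of them (say red and yellow) produces a width-two stem that splits into two width-one tails joined at the triple point. The pair is locked only if the two tails together have at least $m+1$ cells, where $m$ is the common district size. Otherwise one new district can take both tails plus an initial piece of the stem. Imposing this for the three pairs at the triple point gives $r_1\ge y_1\ge b_1\ge r_1$ and $r_2\ge b_2\ge y_2\ge r_2$. So lockedness \emph{forces} $r_1=y_1=b_1$ and $r_2=y_2=b_2$, with zero slack: the tails total exactly $m+1$. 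This is not an overshoot ``by a fixed positive amount''. Equal district sizes alone only give $r_1+r_2=y_1+y_2=b_1+b_2$, which is much weaker.

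These equalities dictate where and how many extra nodes go, and the $120^\circ$ symmetry couples the counts. A node added to the yellow inner spiral lengthens both the northeastern yellow path and, via its rotated copy, the southern light blue path; the middle path and outer end behave similarly. The paper writes $r_i,y_i,b_i$ in terms of extra-node counts $c_1,c_2,d_1,d_2,d_3$ and reduces the system to $d_1-d_3=2k-2$. It then takes $d_2=d_3=0$, $d_1=2k-2$, $c_1=1$, $c_2=4k-3$. Finally it checks that the relevant paths, with $k^2-4k+4$ and $k^2-2k$ cells, have room for these nodes; this is one place where $k\ge 6$ is used, the other being the $(k-5)\times k$ parallelogram. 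The authors stress that this system is solvable ``only barely'' and that lockedness depends on the exact number and location of the extra nodes.

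Your outer--outer argument also needs these equalities. The paper first uses the tail count to pin the yellow and light blue tails. It then uses the width-one choke point to keep the second parallelogram yellow. Finally, a minimum-length transit of the first parallelogram forces the boundary-hugging path.

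A minor point: the full hexagon of your Step~1 has $12k^2-6k$ cells. The paper removes three $4\times(k-1)$ corner parallelograms and then adds $18k-12$ extra nodes to reach exactly $12k^2$.
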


\begin{proof}

Let $k \geq 6$. Our construction is illustrated by \Cref{fig:locked-6}, which shows the cases $k=8$ and $k=17$. We use the following procedure:
\begin{enumerate}
    \item The dual graph is drawn as a regular triangular lattice in which each node is represented by a hexagonal cell.
    \item The cells are colored using six colors.
    \item Certain extra nodes are added to the dual graph as shown in \Cref{fig:hexagon-add}.
\end{enumerate}  

In a nutshell, Steps 1 and 2 formalize the winding pattern of districts in \Cref{fig:locked-6}. They amount to routine bookkeeping. Step 3, which shows how and where to add extra \Cref{fig:hexagon-add} nodes to obtain a locked configuration, contains the crux of the argument.

\begin{figure}[h]
\centering
\includegraphics[width=0.2\textwidth]{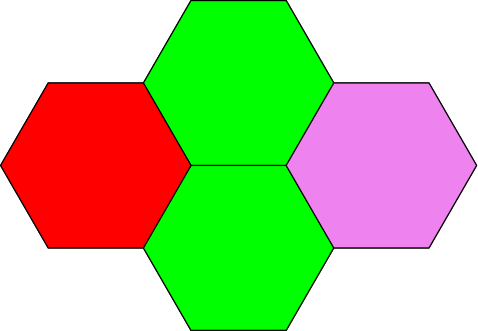} \qquad \includegraphics[width=0.2\textwidth]{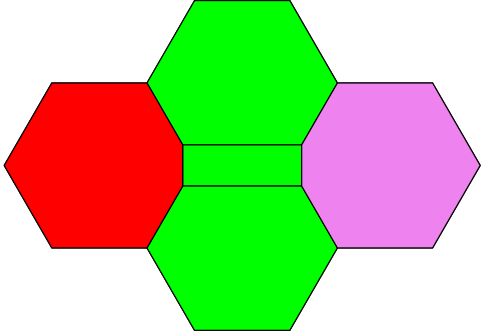}
\vspace{8pt}
\caption{Adding a node to the dual graph.}
\label{fig:hexagon-add}
\end{figure}

\textbf{Step 1: Draw the dual graph as a regular triangular lattice.} The dual graph begins as a regular triangular lattice whose nodes are drawn as hexagonal cells. These cells are arranged in the shape of a large hexagon. Then, three parallelogram shapes are removed from the corners of the large hexagon. \Cref{fig:hex-cutoff} shows the large hexagon in the case $k=8$. The cells making up the three parallelogram shapes, which will be removed, are shaded in gray.

\begin{figure}[t]
\centering
\includegraphics[width=0.7\textwidth]{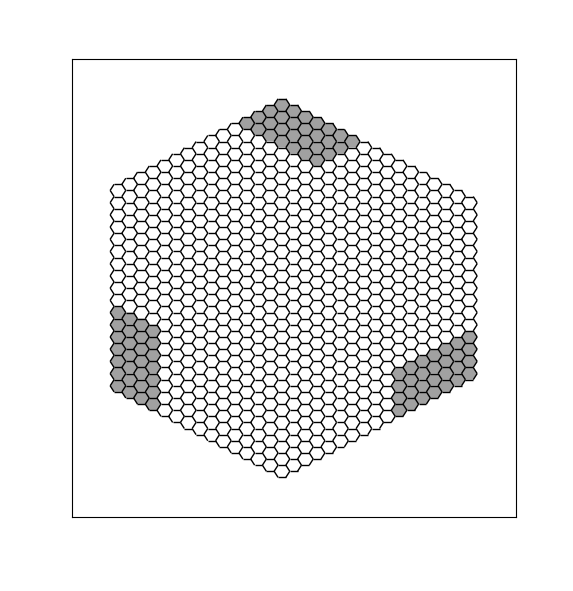}
\caption{First steps of drawing the dual graph for \Cref{thm:locked-6} in the case $k=8$. The side lengths of the large hexagon, going clockwise from the Northwest vertex, are: $2k-1$, $2k+1$, $2k-1$, $2k+1$, $2k-1$, $2k+1$. The gray cells make up the three $4 \times (k-1)$ parallelograms which are removed from the North, Southeast, and Southwest corners.}
\label{fig:hex-cutoff}
\end{figure}

Before the three parallelogram shapes are removed, the large hexagon has six vertices which we label as Northwest, North, Northeast, Southeast, South, and Southwest. (Throughout the argument, we will use compass directions for orientation.) The large hexagon is not a regular hexagon.  Starting from the Northwest vertex and going clockwise, the Northwest--North side has $2k-1$ cells, then the North--Northeast side has $2k+1$ cells, then the side lengths continue to alternate between $2k-1$ and $2k+1$.  We draw a $4 \times (k-1)$ parallelogram at the North vertex, containing the last 4 cells of the Northwest--North side and the first $k-1$ cells of the North--Northeast side, and remove it from the graph. Finally, we enforce $120^\circ$ rotational symmetry by removing similar $4 \times (k-1)$ parallelograms from the Southeast and Southwest vertices. See \Cref{fig:hex-cutoff}.

As we will see in Step 3 of the proof, the construction allows for almost complete freedom in choosing the shape of the outer boundary. In particular, we could avoid removing the three $4 \times (k-1)$ parallelograms at the cost of increasing the number of extra \Cref{fig:hexagon-add} nodes added during Step 3. This would also increase the minimum value of $k$ from 6 to 10.

\textbf{Step 2: Color the cells.} In both the top and bottom panels of \Cref{fig:locked-6}, the center of the dual graph is the meeting point of the red, green, and dark blue districts. This point is marked with a large black dot in \Cref{fig:hexagon-middle}. At this stage, the cells have not yet been colored and the extra \Cref{fig:hexagon-add} nodes have not yet been added.

\begin{figure}[h]
\centering
\includegraphics[width=0.3\textwidth]{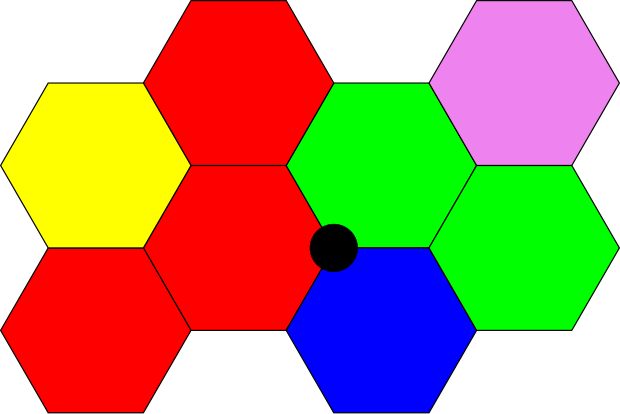}
\vspace{8pt}
\caption{Center of the locked configurations shown in \Cref{fig:locked-6}, before the extra nodes are added to the dual graph.}
\label{fig:hexagon-middle}
\end{figure}

Start with the cell immediately to the West of the black dot. Color it red. Looking at the top panel of \Cref{fig:locked-6}, the red district extends to the North and Southwest from this ``anchor'' cell. Let us focus on the North direction and ignore the extra \Cref{fig:hexagon-add} nodes, which are not yet part of the graph. The red district goes forward for 6 cells, then turns left, then goes forward for another 6 cells, then left again, then forward 5, left, forward 5, and so forth. We describe this path using the notation $6\LL 6\LL 5\LL 5\LL\cdots 2\LL 2\LL 1\LL 1$. For general $k \geq 6$, the red district going North from the anchor cell follows the path
\[
(k-2)\LL(k-2)\LL(k-3)\LL(k-3)\LL\cdots 2\LL 2\LL 1\LL 1.
\]
The red district going Southwest from the anchor cell follows the path
\[
(k-2)\LL(k-1)\LL(k-2)\LL(k-2)\LL(k-3)\LL(k-3)\LL\cdots 2\LL 2\LL 1\LL 1.
\]
At this point, the total number of cells in the red district is
\[
1 + \left( \sum_{j=1}^{k-2} 2j \right) + \left( (k-2) + (k-1) + \sum_{j=1}^{k-2} 2j \right) = 2k^2 - 4k + 2,
\]
where the initial 1 comes from the anchor cell itself, the first bracket comes from the North direction, and the second bracket comes from the Southwest direction. This is $4k-2$ cells short of the target of $2k^2$. We will eventually add $4k-2$ extra red cells as in \Cref{fig:hexagon-add}.

The red district is now finished except for the extra \Cref{fig:hexagon-add} cells. Rotating $120^\circ$ clockwise about the black dot gives the green district, and rotating another $120^\circ$ clockwise about the black dot gives the dark blue district.

We now color the yellow district. Start by coloring a yellow anchor cell which is immediately to the Northwest of the red anchor cell. (This is the yellow cell in \Cref{fig:hexagon-middle}.) From this anchor, the yellow district extends to the North and Southwest. The path to the North is
\[
(k-3)\LL(k-3)\LL(k-4)\LL(k-4)\LL\cdots 2\LL 2\LL 1\LL 1.
\]
The path to the Southwest, using R for right turns, starts with
\[
(k-2)\RR(k-2)\RR(k-1)\RR(k-1)\RR(k-1)\LL(k-1)\RR(k)\RR (k-1).
\]
Then, the yellow district opens out into a parallelogram of size $(k-5) \times k$ and finally adds a tail of 4 cells at the end. Incidentally, the $(k-5)$ here is one reason for the requirement that $k \geq 6$.

At this point, the total number of cells in the yellow district is
\[
1 + \left( \sum_{j=1}^{k-3} 2j \right) + \Bigg( 2(k-2) + 4(k-1) + k + (k-1) + (k-5)k + 4 \Bigg) = 2k^2 - 2k + 2.
\]
We will eventually add $2k-2$ extra yellow cells to reach the target of $2k^2$.

The yellow district is now finished except for these extra cells. Rotating $120^\circ$ clockwise about the black dot gives the purple district, and rotating another $120^\circ$ clockwise about the black dot gives the light blue district.

This completes the cell coloring. We omit the (tedious) proof that each cell has received one and only one color according to this procedure, appealing instead to the visuals of \Cref{fig:locked-6}.

\textbf{Step 3: Add extra nodes to the dual graph.} In this step, we add extra nodes as in \Cref{fig:hexagon-add} so that all six districts have equal size and so that the configuration is locked. This could also be accomplished by choosing some nodes to have population 2 instead of population 1, as in \cite{JTF_tilings}. That would preserve the strict triangular lattice structure of the dual graph. We choose instead to maintain the requirement that all nodes have population 1 and balance the districts by adding extra nodes, turning the dual graph into a slightly irregular triangulation of bounded degree.

In the end, it doesn't much matter which procedure we use to balance the districts. We will set up a system of inequalities that must be satisfied in order for the districts to have equal size and for the configuration to be locked. Importantly, the system has a solution, but only barely. This means that the configurations shown in \Cref{fig:locked-6} are locked, but they rely on the exact number and location of the extra nodes. It is reasonable to conjecture that a ``generic'' or random bounded degree triangulation should not admit any locked configurations with high probability.

We will add extra nodes in a way that preserves the $120^\circ$ rotational symmetry of the figure. Because of this, we need only verify the following conditions: (i) the red/green/dark blue districts have the same number of nodes as the yellow/purple/light blue districts; (ii) the red and green districts are locked under ReCom; (iii) the red and yellow districts are locked under ReCom; (iv) the red and light blue districts are locked under ReCom; and (v) the yellow and light blue districts are locked under ReCom.

Conditions (iii), (iv), and (v) signal the importance of the triple point where the red, yellow, and light blue districts meet. See the upper panel of \Cref{fig:locked-6}. We identify the three cells touching this point as the red triple cell, yellow triple cell, and light blue triple cell.

The red district extends from the red triple cell in two paths to the South and to the Northeast. Suppose that after adding extra nodes, the Southern red path has $r_1$ cells and the Northeastern red path has $r_2$ cells. We do not include the red triple cell itself in either of these paths. Thus, the total number of cells in the red district is $m = 1+r_1+r_2$. For condition (i) to hold, the yellow and light blue districts must also have $m$ cells each.

The yellow district extends from the yellow triple cell in two paths to the Northeast and to the Northwest. Suppose that after adding extra nodes, the Northeastern yellow path has $y_1$ cells and the Northwestern yellow path has $y_2$ cells. The total number of cells in the yellow district is $m = 1+y_1+y_2$.

The light blue district extends from the light blue triple cell in two paths to the Northwest and to the South. Suppose that after adding extra nodes, the Northwestern light blue path has $b_1$ cells and the Southern light blue path has $b_2$ cells. The total number of cells in the light blue district is $m = 1+b_1+b_2$.

Let us consider condition (iii). When the red and yellow districts are combined, they form a segment of width 2 that splits into two tails of width 1. The red and yellow districts will be locked under ReCom only if the combination of the two tails has at least $m+1$ cells. From our definitions above, the red tail has $1+r_1$ cells and the yellow tail has $1+y_2$ cells. Thus, $(1+r_1) + (1+y_2) \geq m+1$, so $1 + r_1 + y_2 \geq m$. As $1 + r_1 + r_2 = m$ and $1 + y_1 + y_2 = m$, we obtain the inequalities $r_1 \geq y_1$ and $y_2 \geq r_2$.

Condition (iv) about the red and light blue districts yields the similar inequality $(1+b_1) + (1+r_2) \geq m+1$, so $1+b_1+r_2 \geq m$. This implies that $b_1 \geq r_1$ and $r_2 \geq b_2$. Likewise, condition (v) implies that $y_1 \geq b_1$ and $b_2 \geq y_2$.

We now have the inequalities $r_1 \geq y_1 \geq b_1 \geq r_1$ and $r_2 \geq b_2 \geq y_2 \geq r_2$. This means that $r_1 = y_1 = b_1$ and $r_2 = y_2 = b_2$.

Conversely, if $r_1 = y_1 = b_1$ and $r_2 = y_2 = b_2$, then condition (i) certainly holds. Conditions (iii) and (iv) are satisfied since in both cases, the combination of the two tails has exactly $m+1$ cells.

Condition (v), about the yellow and light blue districts, is more subtle because of the solid light blue parallelogram. Consider the union of the yellow and light blue districts in either panel of \Cref{fig:locked-6}. Since $r_1 = y_1 = b_1$ and $r_2 = y_2 = b_2$, the combination of the two tails has exactly $m+1$ cells. When partitioning this union into two connected sets of $m$ cells each, we may assume that the yellow tail remains yellow and the light blue tail remains light blue. The two districts keep their original colors until reaching the first parallelogram, which was originally light blue.

Consider now the second parallelogram, which was originally yellow. Since it can only be reached through a passage of width 1, it must be all the same color. Based on the cells which are already colored light blue, the light blue district only has enough cells left for one parallelogram plus a tail of 4 cells. If we assigned the second parallelogram to the light blue district, it would use up all its cells while still being disconnected. So, the second parallelogram must remain yellow.

We know that the yellow district must extend all the way to the second parallelogram. Since it can't use any more cells than it originally had, it must transit the first parallelogram by a direct path of minimum length. There are lots of minimum length paths through the parallelogram, but they all lead to a disconnected blue district except for the path that hugs the bottom boundary. In other words, the yellow district must keep all of its original yellow cells. This proves condition (v), that the yellow and light blue districts are locked under ReCom.

Finally, we consider condition (ii). The same triple point analysis as before provides a necessary and sufficient condition for the red/green, green/dark blue, and red/dark blue pairs of districts to be locked under ReCom. But, this condition is automatically satisfied due to the $120^\circ$ rotational symmetry of these three districts. In other words, if we add extra nodes in a rotationally symmetric fashion, condition (ii) is guaranteed.

We conclude that if rotational symmetry is preserved, the conditions $r_1 = y_1 = b_1$ and $r_2 = y_2 = b_2$ are necessary and sufficient to verify all five conditions (i)-(v) and obtain a valid locked configuration.

Now it is time for some computations. Before adding any extra \Cref{fig:hexagon-add} nodes, here are the cell counts for the six paths extending from the red/yellow/light blue triple point. (We use standard abbreviations for the compass directions.)
\begin{align*}
\text{S red path:} && (k-1) + \sum_{j=1}^{k-2} 2j &= k^2 - 2k + 1 \\
\text{NE red path:} && (k-2) + \sum_{j=1}^{k-2} 2j &= k^2 - 2k \\
\text{NE yellow path:} && (k-2) + \sum_{j=1}^{k-3} 2j &= k^2 - 4k + 4 \\
\text{NW yellow path:} && (k-2) + 4(k-1) + k + (k-1) + (k-5)k + 4 &= k^2 + 2k - 3 \\
\text{NW light blue path:} && (k-1) + k + (k-1) + (k-5)k + 4 &= k^2 - 2k + 2 \\
\text{S light blue path:} && (k-1) + \sum_{j=1}^{k-1} 2j &= k^2 - 1
\end{align*}

If we add $c_1$ extra nodes to the Southern red path and $c_2$ extra nodes to the Northeastern red path, we will have
\begin{align*}
r_1 &= (k^2-2k+1) + c_1 \\
r_2 &= (k^2-2k) + c_2.
\end{align*}

The yellow and light blue districts require more care, since we must add nodes in a way that respects the rotational symmetry. We split these districts into three parts: the inner spiral, the middle path, and the outer end. Adding an extra node to the inner spiral contributes to the Northeastern yellow path and the Southern light blue path. Adding an extra node to the middle path contributes to the Northwestern yellow path and the Southern light blue path. Adding an extra node to the outer end contributes to the Northwestern yellow path and the Northwestern light blue path.

If we add $d_1$ extra nodes to the inner spiral, $d_2$ extra nodes to the middle path, and $d_3$ extra nodes to the outer end, we will have
\begin{align*}
y_1 &= (k^2-4k+4) + d_1 \\
y_2 &= (k^2+2k-3) + (d_2+d_3) \\
b_1 &= (k^2-2k+2) + d_3 \\
b_2 &= (k^2-1) + (d_1+d_2).
\end{align*}

To ensure that $r_1=y_1=b_1$ and $r_2=y_2=b_2$, we must choose $c_1,c_2$ and $d_1,d_2,d_3$ so that
\begin{equation} \label{eq:triple-condition-1}
(k^2-2k+1) + c_1 = (k^2-4k+4) + d_1 = (k^2-2k+2) + d_3
\end{equation}
and
\begin{equation} \label{eq:triple-condition-2}
(k^2-2k) + c_2 = (k^2+2k-3) + (d_2+d_3) = (k^2-1) + (d_1+d_2).
\end{equation}
Both the second equality in \eqref{eq:triple-condition-1} and the second equality in \eqref{eq:triple-condition-2} reduce to the single equation
\[
d_1-d_3 = 2k-2.
\]
So, we can choose any nonnegative values for $d_2$ and $d_3$. Then, set $d_1 = d_3 + 2k-2$ and use \eqref{eq:triple-condition-1} and \eqref{eq:triple-condition-2} to find the required values of $c_1,c_2$.

For the construction in \Cref{fig:locked-6}, we set $d_2 = d_3 = 0$. This minimizes the number of extra nodes. We obtain
\begin{align*}
d_1 &= 2k-2 \\
c_1 &= 1 \\
c_2 &= 4k-3.
\end{align*}
Thus, we must add $2k-2$ extra nodes to the yellow inner spiral (which is the Northeastern yellow path), 1 extra node to the Southern red path, and $4k-3$ extra nodes to the Northeastern red path. Since $k \geq 6$, the Northeastern yellow path with its $k^2-4k+4$ cells has room for $2k-2$ extra nodes, and the Northeastern red path with its $k^2-2k$ cells has room for $4k-3$ extra nodes.

These additions are copied over to the purple and light blue districts as well as the green and dark blue districts to maintain rotational symmetry. \Cref{fig:locked-6} shows how to accomplish this while controlling the maximum degree of the triangulation. In the particular construction displayed, the maximum degree is 8. This could be reduced to 7 with a few minor changes.

Finally, the total number of nodes in each district is
\[
\begin{split}
m &= 1 + r_1 + r_2 = 1 + y_1 + y_2 = 1 + b_1 + b_2 \\
&= 1 + (k^2-2k+2) + (k^2+2k-3) = 2k^2.
\end{split}
\]
This completes the construction.
\end{proof}

Suppose that we want to change the shape of the outer boundary. This can easily be done by increasing $d_3$ by an amount that is linear in $k$ and adjusting the values of $d_1,c_1,c_2$ accordingly. Since the changes are linear in $k$, there will be enough room to add the required extra nodes along the appropriate paths as long as $k$ is sufficiently large. The extra nodes in the outer ends of the yellow, purple, and light blue districts can be molded into whatever boundary shape is desired. The configuration will remain locked as long as there is still a ``choke point'' of width 1 in the combined yellow/light blue district and its rotations. We note that despite the freedom in choosing the outer boundary shape, the existence of a locked configuration still relies on choosing the number and location of the extra nodes very carefully.

\section{Districting the grid with 3 districts}\label{sec:thin_rectangle}
Tucker-Foltz explored many metagraphs on the rectangular grid \cite{JTF_tilings} with tilings of a fixed size, and a family of weighted grid graphs (each node having population either 1 or 2) with exactly 6 districts.    We\footnote{Anecdotally, we know that Tucker-Foltz and others have also explored the case of three districts and a generic rectangular grid.  To our knowledge, no results have yet been proved.} considered the case of three districts in the generically-sized rectangular grid.  In this case, 2/3 of the nodes can be re-combined and re-split in each ReCom step, so it seems reasonable to expect that the corresponding metagraph is connected.  The main complication seems to be the kind of winding that districts can do.  

Indeed, many of Tucker-Foltz's examples of locked tilings have districts which wind around each other \cite{JTF_tilings}.  Our own examples in \Cref{fig:locked-6} have significant winding of districts.  And a key Lemma in Cannon's results on the triangular graph using three districts is called the ``Unwinding Lemma'' \cite{cannon_irreducibility}.

\Cref{thm:3_by_n_rectangle} avoids the winding issue simply because the rectangular region is too narrow to have connected districts with significant winding around each other:

\begin{restatable}{Theorem}{rectanglethm}\label{thm:3_by_n_rectangle}
    Suppose $G$ is the subgraph of the grid graph which is induced by nodes in a $3\times n$ rectangle.  We consider districting maps which consist of 3 districts, each of size $n$.  Then the corresponding metagraph resulting from ReCom moves on this districting map is connected.
\end{restatable}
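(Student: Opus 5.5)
We will show that every districting map of the $3\times n$ grid into three connected districts of size $n$ can be brought by ReCom moves to a single canonical map. The natural choice is the map $P_0$ whose districts are the three horizontal rows. Since ReCom moves are reversible, this shows the metagraph is connected. Index columns $1,\dots,n$ and rows top, middle, bottom.

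The key structural step is to understand where the three districts sit. In any valid map, call a district \emph{separating} if it meets every column, and call it \emph{left} (resp.\ \emph{right}) if it contains a node of column $1$ (resp.\ column $n$) and misses some column. The grid has only three rows, so a connected district that misses column $j$ lies entirely on one side of $j$.

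We would first prove a classification lemma. Either (a) some district $D$ meets every column, or (b) no district does. In case (b), the three districts are arranged left-to-right. More precisely, there is a district $L$ containing all of column $1$ or sharing column $1$ with another district. The argument in case (b) is a counting one: if no district meets every column, each district occupies a contiguous interval of columns. Three such intervals covering $n$ columns with total size $3n$ force a ``staircase'' arrangement. In that arrangement, consecutive districts overlap in at most a few columns, where the boundary between them is a monotone cut of the $3\times k$ strip.

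The main reduction then goes as follows.
\begin{itemize}
\item \textbf{Case (b).} Pick two adjacent districts, say the left one $L$ and the middle one $M$. Their union is a connected region containing whole columns $1,\dots,c$ together with a partial column. Cut it into a new $L'$ consisting of full columns, roughly the first $\lfloor n/3\rfloor$ columns plus a partial column, and the complement $M'$. Both are connected because we cut along a monotone column boundary. Repeating with $M'$ and the right district $R$ yields the ``vertical'' canonical map $V_0$ of three column-blocks with monotone interfaces.
\item \textbf{Case (a).} Let $D$ be the separating district. The other two districts lie one above and one below $D$, or both on one side. We merge $D$ with a neighbour $E$ and recut the union so that one piece is a straight row. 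The union of $D$ and $E$ contains, in every column, at least one cell. The idea is that after recombining, the top row (if $E$ lies above) or the bottom row can be taken as one district, with the complement connected. When this fails because the union is too ragged, we instead recut it so that the third district becomes adjacent to both ends, reducing to case (b).
\end{itemize}
Finally, we connect $V_0$ to $P_0$ directly, by an explicit sequence of moves sliding the column-block interfaces. Merging two vertical blocks of size $n$ gives a $3\times(2n/3)$-ish region, and a horizontal row of length $n$ does not fit inside it. So this last step must go through intermediate ``L-shaped'' maps: first make the top row of length $n$ by taking a snake through the union of two blocks, then fix the rest. We would construct this chain by hand, treating separately the residues of $n$ modulo $3$.

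The main obstacle is case (a): showing that a union of two districts, one of which meets every column, can always be re-split into two connected pieces of size $n$ that are ``more canonical'' by some monotone potential. Candidate potentials are the number of columns not entirely contained in one district, or the number of direction changes of the district boundaries. To define the recut, we would first try the following: order the cells of the union along a Hamiltonian-like boustrophedon path that respects the shape, and cut it at position $n$. We would then verify connectivity of both halves through a case analysis on how the third district intrudes into each column. There are only a bounded number of column types, namely subsets of three rows, so this analysis is finite.
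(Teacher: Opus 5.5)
There is a genuine gap, and it is in Case (b), not Case (a). Your structural claim that, when no district meets every column, the districts form a left-to-right staircase with monotone interfaces is false. Use (row, column) coordinates and take the following districts:
\begin{itemize}
\item $A$: the top row in columns $1,\dots,n-1$, together with $(2,1)$;
\item $B$: the bottom row in columns $1,\dots,n-1$, together with $(2,2)$;
\item $C$: the middle row in columns $3,\dots,n$, together with $(1,n)$ and $(3,n)$.
\end{itemize}
These are connected districts of size $n$, none of them meets every column, and $A$ and $B$ span exactly the same columns. Moreover, no union of two districts contains columns $1,2,3$ in full: $A\cup B$ misses $(2,3)$, $A\cup C$ misses $(3,1)$, and $B\cup C$ misses $(1,1)$. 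At the right end, only column $n$ is ever full. So for $n\ge 9$ no pair $L,M$ exists from which you can cut a new district made of the first $\lfloor n/3\rfloor$ full columns plus a partial column. Your Case (b) move therefore cannot be executed. Case (b) also contains configurations where one district is boxed in by the other two over a stretch of columns. Those entangled configurations are exactly where an argument is needed, and the proposal gives none.

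Meanwhile Case (a), which you flag as the main obstacle, is immediate. A district that meets all $n$ columns but has only $n$ cells has exactly one cell per column. Connectivity then forces its cells in consecutive columns to be horizontally adjacent, so the district is an entire row. If it is the middle row you are already at $P_0$. Otherwise the other two districts fill a $2\times n$ strip, and one ReCom move splits that strip into two rows. Your potentials, boustrophedon cuts and ``finitely many column types'' are therefore aimed at the wrong case. In any event, a bounded list of column types would not by itself make a global connectivity-plus-termination argument finite.

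The missing idea is a target state that is easy to finish from and reachable from every configuration. The paper uses a district that is \emph{contiguous against the left edge}, meaning every one of its cells reaches column $1$ by moving left inside the district. From there, three explicit moves give the row map. This also settles your $V_0\to P_0$ step, since a column block has this property. The paper reaches that state by splitting on whether some district's removal disconnects the other two:
\begin{itemize}
\item If it does, then in every row all cells of one remaining district lie left of all cells of the other. Merging the left one with the separating district and shifting its cells left, row by row, works.
\item If it does not, a suitably chosen district meeting column $1$ is already left-contiguous in rows $1$ and $3$. Its middle row is then straightened by one merge-and-shift, or by two when the third district is boxed in.
\end{itemize}
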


We note that \Cref{thm:3_by_n_rectangle} would follow from the main result in \cite{Akitaya_et_al} if we allowed the district sizes to vary by $\pm 1$ when performing ReCom moves. Since we require the district sizes to stay constant, the approach of \cite{Akitaya_et_al} does not work and we must use a different strategy.

\begin{proof}
Let red, blue, and green be the district colors. We prove the result by giving a sequence of ReCom moves such that the end result is three districts which are each $1 \times n$ rows.  This will imply that the metagraph is connected.

First, we define a district (say that district is red) to be \emph{contiguous against the left edge} if: when we start at any red cell and move to the left, we eventually reach the left edge of the rectangle without ever leaving the red district.  Suppose that there is a district (without loss of generality, the red district) which is contiguous against the left edge of the rectangle.  In that case, we combine the green and blue districts. In the combined green-blue district, start by looking only at the top and bottom rows. Determine which of these two rows is longer (extends further to the left). Color the longer row green and the shorter row blue. Then, color the middle row from left to right, starting with the correct number of green cells and finishing with blue cells. \Cref{fig:left_contig1} shows examples of this ReCom move.

\begin{figure}[h]
    \centering
    \vspace{-15pt}
    \includegraphics[width = .3\textwidth]{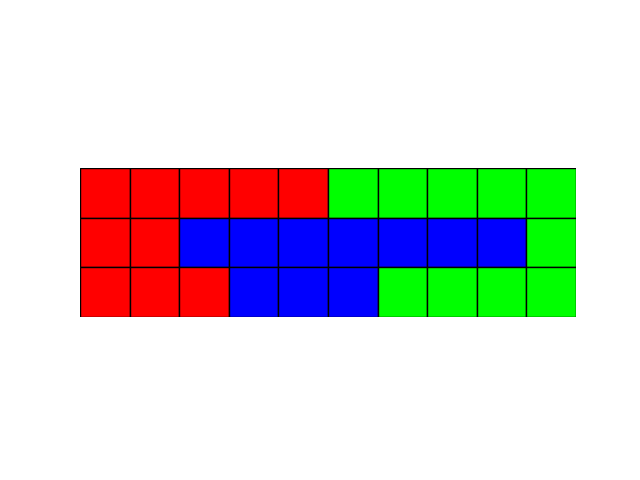}
    \includegraphics[width = .3\textwidth]{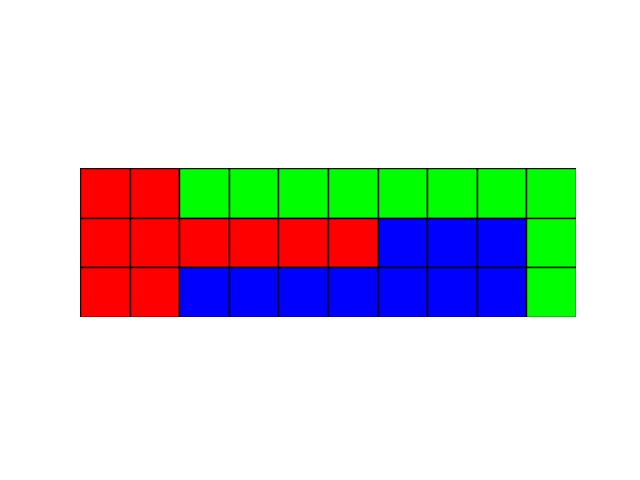}
    \includegraphics[width = .3\textwidth]{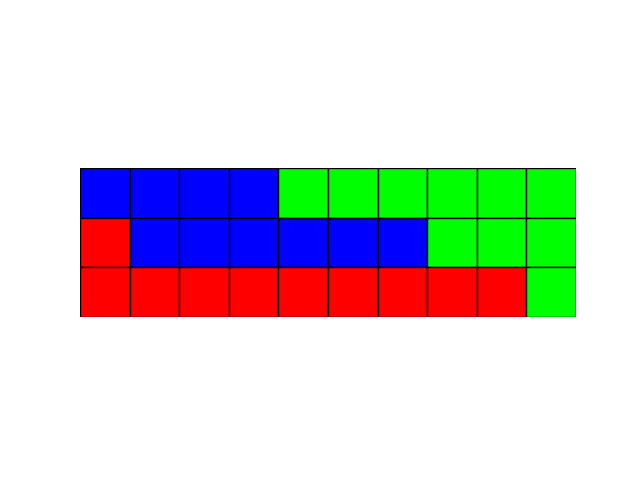}
    \vspace{-25pt}
    
    \includegraphics[width = .3\textwidth]{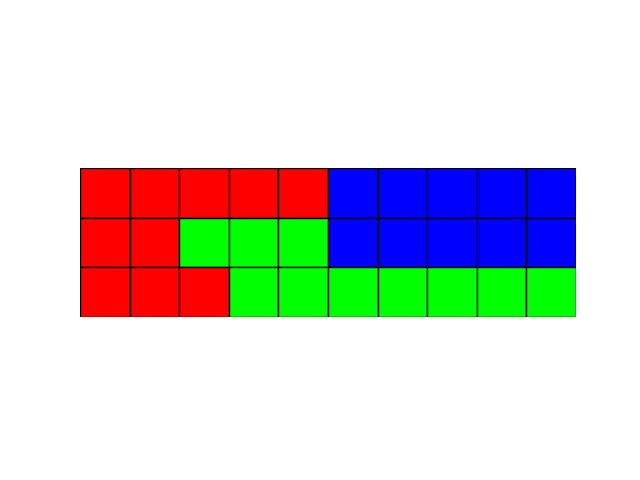}
    \includegraphics[width = .3\textwidth]{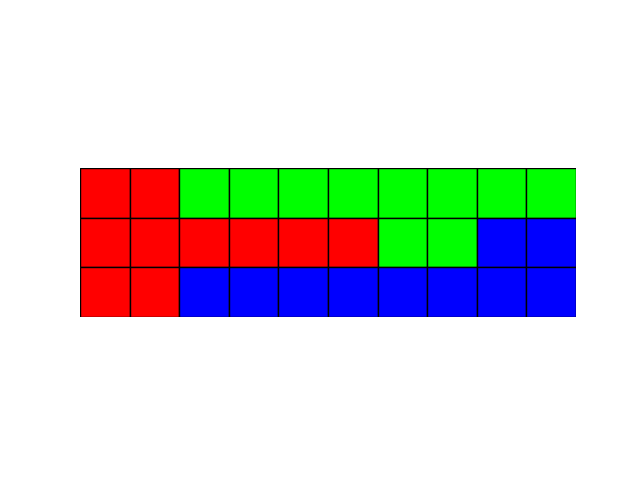}
    \includegraphics[width = .3\textwidth]{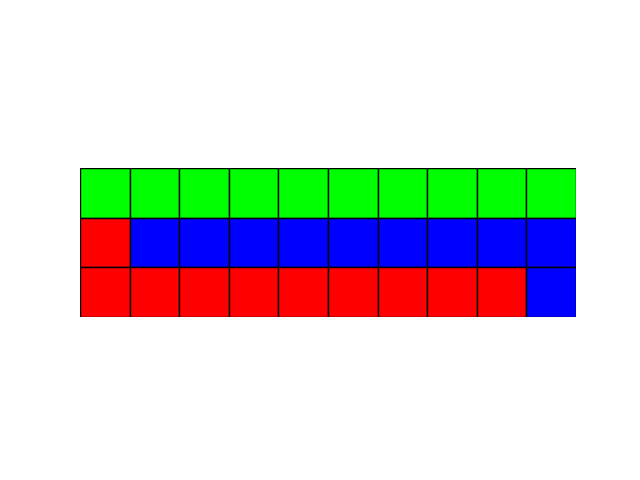}
    \caption{Top: Some arrangements of three districts in a $3 \times n$ grid where the red district is contiguous against the left edge. Bottom: The result of taking the top images, combining green and blue, coloring the longer of the top and bottom rows green and the shorter blue, and coloring the middle row from left to right starting with the correct number of green cells and finishing with the correct number of blue cells.}
\label{fig:left_contig1}
\end{figure}

Next, combine the red and green districts. The combined red-green district contains either the entire top row or the entire bottom row. Color this entire row green and the rest red. The green district is now a $1 \times n$ row. Finally, combine red and blue to make all three districts into $1 \times n$ rows. See \Cref{fig:left_contig2}.

\begin{figure}[h]
    \centering
    \vspace{-15pt}
    \includegraphics[width = .3\textwidth]{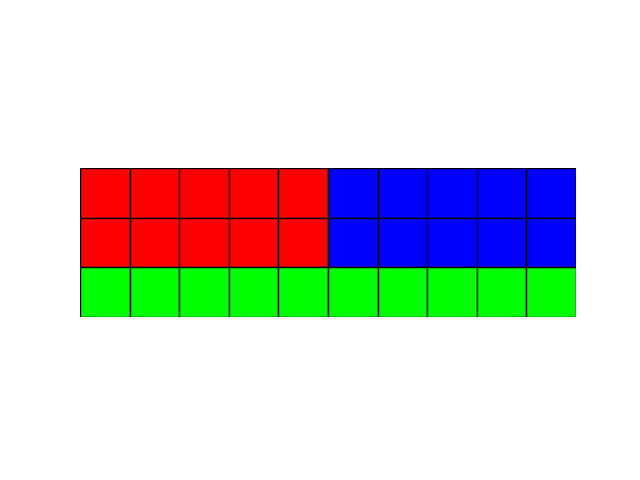}
    \includegraphics[width = .3\textwidth]{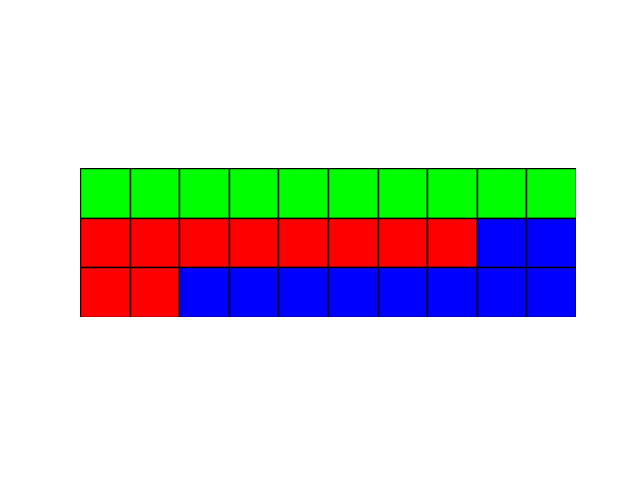}
    \includegraphics[width = .3\textwidth]{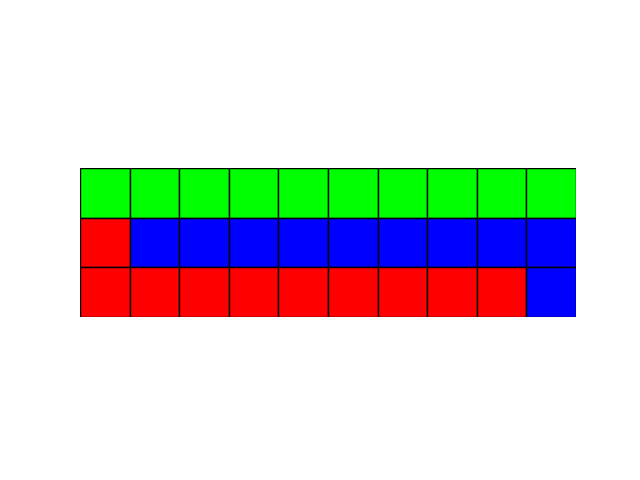}
    \vspace{-25pt}
    
    \includegraphics[width = .3\textwidth]{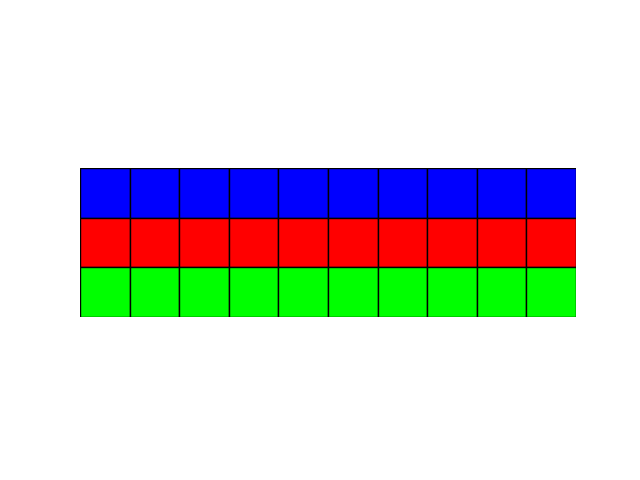}
    \includegraphics[width = .3\textwidth]{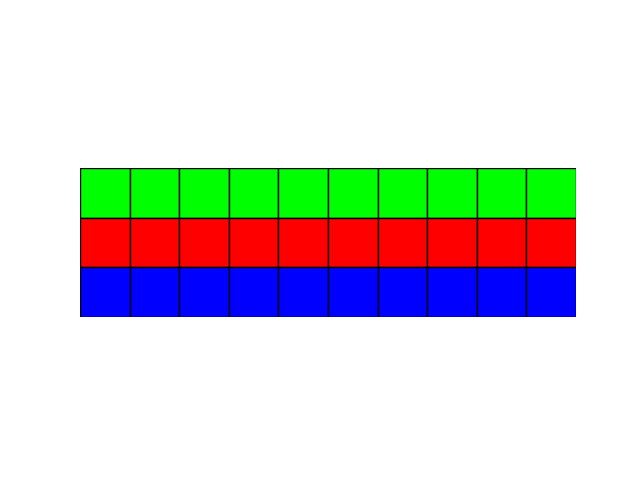}
    \includegraphics[width = .3\textwidth]{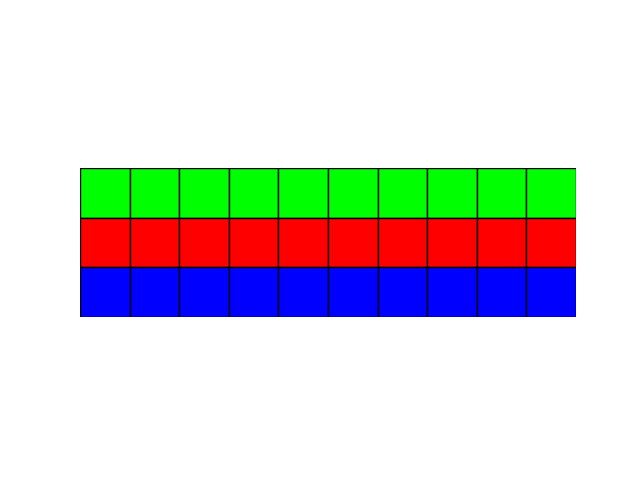}
    \caption{Top: The result of taking the bottom images in \Cref{fig:left_contig1}, combining red and green, and making an entire row green. Bottom: Combining red and blue to make three $1 \times n$ rows.}
\label{fig:left_contig2}
\end{figure}

If no district is contiguous against the left edge of the rectangle, we consider two cases. It will be useful to put a coordinate system on the $3 \times n$ grid: position $(j,k)$ means that $j$ is the vertical coordinate ($1 \leq j \leq 3$, going from top to bottom) and $k$ is the horizontal coordinate ($1 \leq k \leq n$, going from left to right).

\noindent\underline{Case 1:}  There is some district that, upon removal, disconnects the remaining grid into two connected components (the other two districts).

Without loss of generality, say that green is the district whose removal disconnects the graph.  If the green district consists of the entire middle row, then we are done. Otherwise, all three districts must intersect the middle row at some point. Assume without loss of generality that the leftmost non-green cell in the middle row is colored red. We show some sample images in the top row of \Cref{fig:disconnect1}. (For visual convenience, the bottom row of \Cref{fig:disconnect1} shows the ReCom moves that we will eventually make on these configurations.)

\begin{figure}[h]
    \centering
    \vspace{-15pt}
    \includegraphics[width = .3\textwidth]{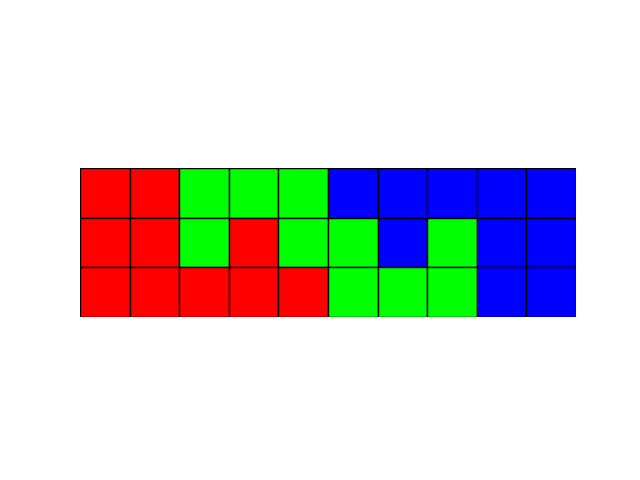}
    \includegraphics[width = .3\textwidth]{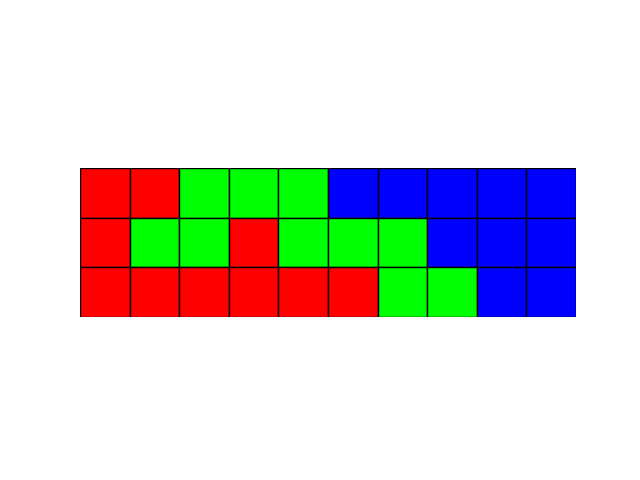}
    \includegraphics[width = .3\textwidth]{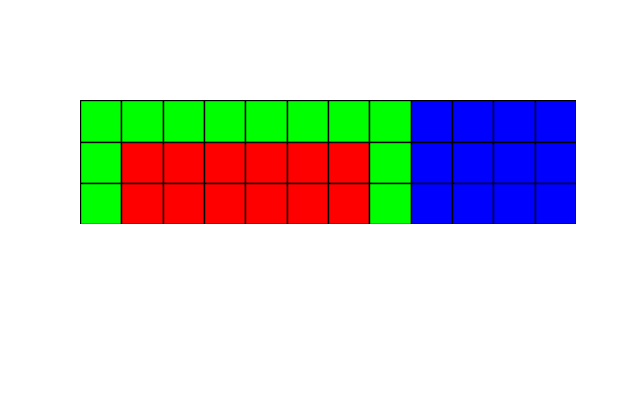}

    \vspace{-25pt}

    \includegraphics[width = .3\textwidth]{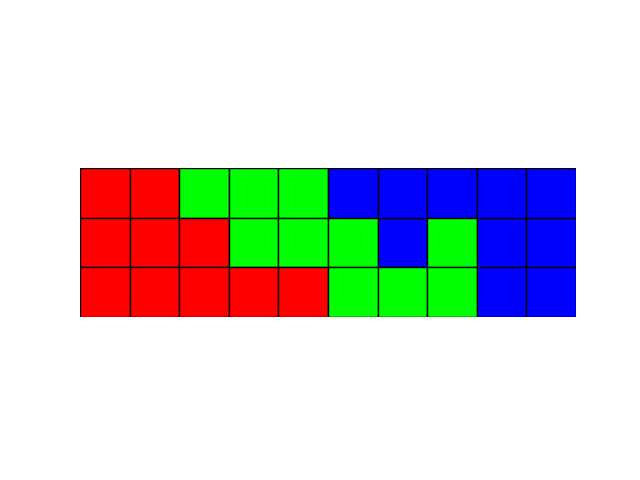}
    \includegraphics[width = .3\textwidth]{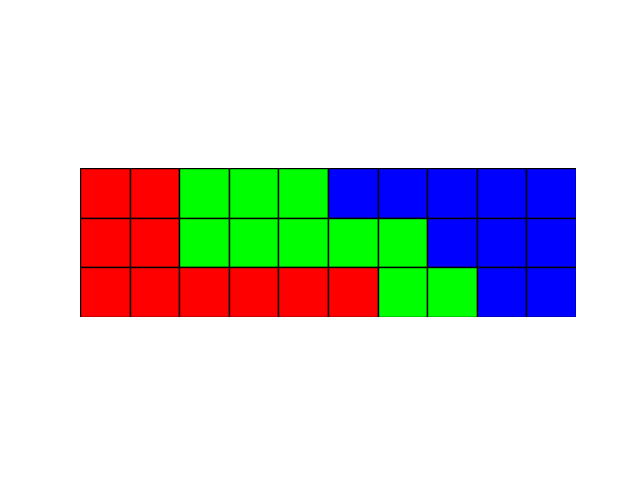}
    \includegraphics[width = .3\textwidth]{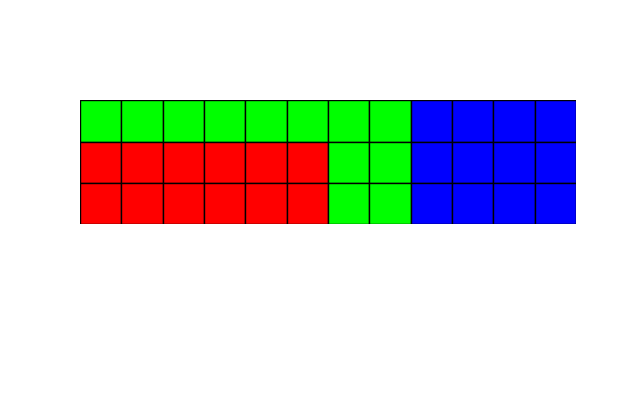}

    \caption{Top: Sample arrangements of three districts in a $3 \times n$ grid, where removal of the green district disconnects the dual graph and the leftmost non-green cell in the middle row is colored red. Bottom: The result of taking the top images, combining the red and green districts, and shifting the red cells to the left of each row.}
\label{fig:disconnect1}
\end{figure}

Observe that all three sample images in the top row of \Cref{fig:disconnect1} satisfy the following two properties:
\begin{enumerate}
    \item In each row individually, all the red cells are to the left of all the blue cells.
    \item The entire red district lies to the left of any column with a blue cell in the middle row. The entire blue district lies to the right of any column with a red cell in the middle row.
\end{enumerate}
We now prove that these two properties must hold in general.

We start with Property 2. Recall that both the red and blue districts must intersect the middle row. Let $(2,j)$ be any red cell in the middle row, and let $(2,k)$ be any blue cell in the middle row. Since the red and blue districts do not touch, column $j$ has no blue cells. This implies that the entire blue district is on either the left side or the right side of column $j$. Similarly, column $k$ has no red cells, so the entire red district is on either the left side or the right side of column $k$.

Let $(2,a)$ be the leftmost non-green cell in the middle row, which is red by assumption. Since cells $(2,1)$ through $(2,a-1)$ are all green, it must be true that $a<k$. Thus, by the previous paragraph, the entire red district is to the left of column $k$. In particular, we have that $j<k$, which implies that the entire blue district is to the right of column $j$. This proves Property 2.

Property 1 for the middle row is clearly implied by Property 2. We now prove Property 1 for the top row. Suppose for contradiction that there are a blue cell $(1,c)$ and a red cell $(1,d)$ with $c<d$. We know that $a<c$. Since neither $(1,a)$ nor $(1,d)$ is blue, the blue district must intersect the middle row at some cell $(2,b)$ where $a<b<d$. But then the entire red district lies to the left of column $b$, contradicting our assumption that $(1,d)$ is red. This proves Property 1 for the top row. The proof for the bottom row is the same.

With Properties 1 and 2 in hand, we are ready to make a ReCom move. Combine the red and green districts. Keep the number of red cells in each row the same, but shift them as far to the left as possible. Because of the two properties, the new red district is contiguous against the left edge of the rectangle (and thereby connected). We argue that the new green district is also connected. Indeed, starting from any newly colored green cell, one can move to the right until reaching a part of the green district that was unaffected by the ReCom move. The bottom row of \Cref{fig:disconnect1} shows the results of this ReCom move for the sample images.

Now that the red district is contiguous against the left edge of the rectangle, we have already shown how to perform more ReCom moves to make the districts into three $1 \times n$ rows.

\noindent\underline{Case 2:}  No matter which district we remove, the remaining districting grid is connected.

We know that some side (either column $1$ or column $n$) has two contiguous cells of the same color.  (If not, then  we must already have three districts which are $1 \times n$ strips.)  We can additionally assume, by flipping and renaming colors if needed, that the red district occupies cell $(2,1)$ and at least one of $(1,1)$ and $(3,1)$. Two sample configurations are shown in the top row of \Cref{fig:connect1}. As in \Cref{fig:disconnect1}, the lower rows of \Cref{fig:connect1} show the ReCom moves that will eventually be made.

\begin{figure}[h]
    \centering
\includegraphics[width = .35\textwidth]{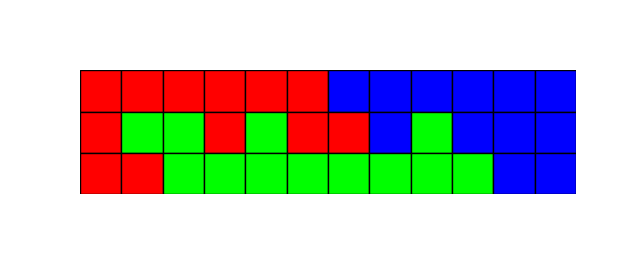} \qquad
\includegraphics[width = .35\textwidth]{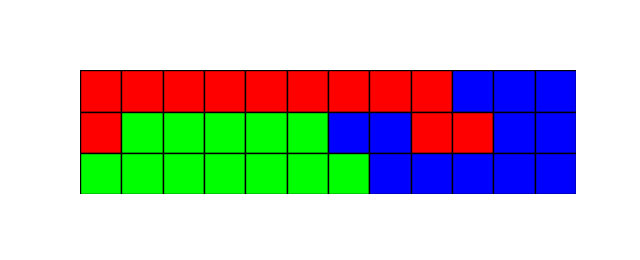}

\includegraphics[width = .35\textwidth]{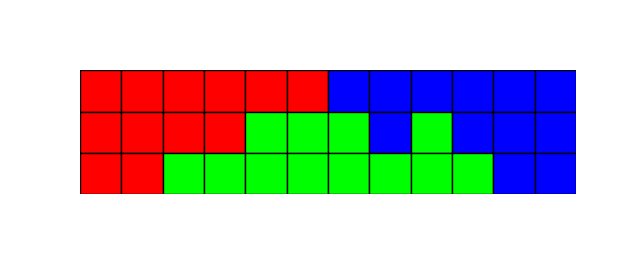} \qquad
\includegraphics[width = .35\textwidth]{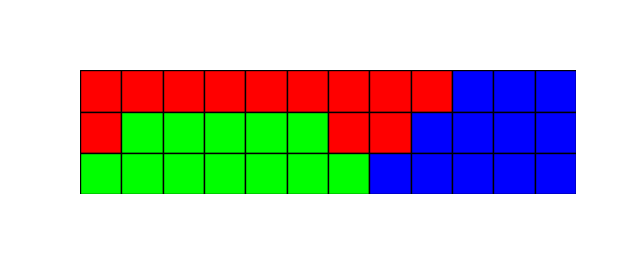}

\qquad \qquad \qquad \qquad \qquad \qquad \qquad \quad \ 
\includegraphics[width = .35\textwidth]{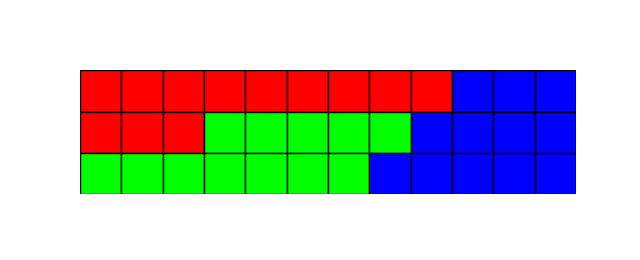}

\vspace{4pt}

\caption{Top row: Sample arrangements where removal of any district leaves a connected dual graph. The red district occupies cell $(2,1)$ and at least one other cell in the first column. \\[6pt]
Second row left: The result of taking the left configuration in the top row, combining the red and green districts, and shifting the red cells in row 2 to the left of the row. \\[6pt]
Second row right: The result of taking the right configuration in the top row, combining red and blue districts, and shifting the blue cells in row 2 to the right of the row. \\[6pt]
Third row right: The result of taking the right configuration in the second row, combining red and green districts, and shifting the red cells in row 2 to the left of the row.}
\label{fig:connect1}
\end{figure}

Our goal is to perform ReCom moves so that the red district is contiguous against the left edge of the rectangle. Since the graph remains connected when the red district is removed, all the red cells in row 1 must be contiguous against the left edge, and all the red cells in row 3 must be contiguous against the left edge. If the same is true in row 2, then our goal is accomplished, and we already know how to perform ReCom moves to make three $1 \times n$ districts.

Suppose that the red cells in row 2 are not contiguous against the left edge. Without loss of generality, assume that the leftmost non-red cell in row 2 is colored green. Let $(2,c)$ be the rightmost red cell in row 2, and let $(2,b)$ be the red cell with the property that all cells $(2,x)$ with $b \leq x \leq c$ are red, but cell $(2,b-1)$ is not red. In the top left image of \Cref{fig:connect1} we have $b=6$ and $c=7$, while in the top right image we have $b=9$ and $c=10$.

Since $(2,b-1)$ is not red, one of $(1,b)$ and $(3,b)$ must be red. The other cannot be red, or else removal of the red district would disconnect the graph. Without loss of generality, we may assume that $(1,b)$ is red and $(3,b)$ is not. It follows that all the cells in row 1 from $(1,1)$ to $(1,b)$ are red.

Consider the cells in row 2 from $(2,1)$ to $(2,b-1)$. Either there are only red and green cells here, or there are cells of all three colors.

\noindent\underline{Case 2a:} The cells in row 2 from $(2,1)$ to $(2,b-1)$ are only red and green.

This case is illustrated by the top left image in \Cref{fig:connect1}.

We perform a ReCom move by combining red and green. In row 1 and row 3, leave all cell colors unchanged. In row 2, shift all the red cells to the left of the row, and color the rest green. As a result, the mixed red-green cells from $(2,1)$ to $(2,c)$ are replaced by a red segment that goes to the right from a starting point of $(2,1)$ and a green segment that goes to the left from a starting point of $(2,c)$. The left image in the second row of \Cref{fig:connect1} shows the result of this ReCom move when applied to the top left image. As well, the transition from the second row on the right to the third row on the right in \Cref{fig:connect1} shows another instance of this ReCom move.

The new red district satisfies the desired property that all three rows are contiguous against the left edge. We must check that the new green district is connected. If $(3,c)$ is green, as in the images on the left side of \Cref{fig:connect1}, then the newly colored green cells are all reachable from $(3,c)$ by going up to $(2,c)$ and turning left. The cells that remained green are reachable from $(3,c)$ by the same green path that existed before the ReCom move. If $(3,c)$ is blue, as in the images on the right side of \Cref{fig:connect1}, then in the original configuration before the ReCom move was made, the green district was boxed in by the red cells from $(1,1)$ to $(1,b)$ and $(2,b)$ to $(2,c)$ along with the blue cell at $(3,c)$. Thus, the new green district post-ReCom move consists of a contiguous segment of cells along row 3, $(3,u)$ through $(3,v)$ for some $u \leq v \leq c-1$, and a contiguous segment of cells along row 2, $(2,w)$ through $(2,c)$ for some $w \leq c$. The horizontal coordinates of these segments must overlap because there are $n$ green cells in total and $c \leq n-1$. This ensures that the new green district is connected. In conclusion, we have performed a legal ReCom move so that all three rows of the red district are contiguous against the left edge of the rectangle.

\noindent\underline{Case 2b:} The cells in row 2 from $(2,1)$ to $(2,b-1)$ include cells of all three colors.

This case is illustrated by the top right image in \Cref{fig:connect1}.

In the second row from $(2,1)$ to $(2,b-1)$, there cannot be a blue cell to the left of a green cell, because then the blue district would be boxed in by both red and green and it would not have enough cells. Also, cell $(3,b)$ must be blue for the same reason, along with all cells to its right in row 3. Let $(2,a)$ be the leftmost blue cell in row 2. Row 2 is entirely red-blue to the right of this point.

We perform a ReCom move by combining red and blue. In row 1 and row 3, leave all cell colors unchanged. In row 2, shift all the blue cells to the right of the row, and color the rest red. As a result, the mixed red-blue cells from $(2,a)$ to $(2,c)$ are replaced by a red segment on the left and a blue segment on the right. Both new districts are connected because the new red cells can be reached from the red cells in row 1 and the new blue cells can be reached from the blue cells in row 3. The right image in the second row of \Cref{fig:connect1} shows the result of this ReCom move when applied to the top right image.

After applying the ReCom move, row 2 consists of a red-green segment on the left side and a blue segment on the right side. This is exactly the scenario of Case 2a. Thus, we repeat Case 2a by combining red and green, leaving all cell colors unchanged in rows 1 and 3, and shifting the red cells in row 2 to the left of the row. The result is illustrated by the right image in the third row of \Cref{fig:connect1}. We have reached a configuration where the red district is contiguous against the left edge of the rectangle.

In both Cases 2a and 2b, we have made the red district contiguous against the left edge of the rectangle. From there, we have already shown how to get three $1 \times n$ rows using more ReCom moves. This completes Case 2 and finishes the proof of \Cref{thm:3_by_n_rectangle}.
\end{proof}

\section{Conclusions and remaining questions}\label{sec:conclusions}

In \Cref{thm:main_result_triangular_lattice}, we have shown that, for any map whose dual graph is an \ctriangle  of side length $n>4$, the dimer tiling metagraph is connected.  In other words, for a family of nice districting maps whose districts are size 2, the Markov chain on this map using ReCom moves is irreducible.  

This result is meaningful because it adds to the short list of families of redistricting metagraphs which are known to be connected.  We note that Cannon's example of a family of connected redistricting metagraphs \cite{cannon_irreducibility} required 70+ pages of proof (though that was greatly shortened by Akitaya et al's proof \cite{Akitaya_et_al}); ours required 17+ days of compute time using 32 cores and 1 TB of memory.  

This result is also meaningful in that it has a direct application: many states require that their state senate districting maps be formed by pairing state house districts together.  Suppose a state has such a requirement.  Then \Cref{thm:main_result_triangular_lattice} states that, if a state house districting map has a dual graph which happens to be a dimer-tilable triangular region of side length $n>4$, then any possible state senate map can be constructed by starting with a single state senate map and performing a series of ReCom moves on that map.

\Cref{thm:main_result_triangular_lattice} applies only when the dual graph is in the shape of a large (full or clipped) triangle. It is natural to wonder whether the metagraph is still connected for other simple shapes such as parallelograms and hexagons. We expect so. The crucial step in the proof of \Cref{thm:main_result_triangular_lattice} is \Cref{lemma:inductive-gadget}, which requires the dual graph to have a straight boundary rather than a jagged boundary. (Indeed, \Cref{fig:herringbone} demonstrates that the metagraph may no longer be connected when the boundary of the dual graph is jagged.) We think that the metagraph should be connected whenever the dual graph is a simple shape with straight-line boundaries. Proving this would probably require additional extensive computation.

Of course, there are many other maps whose metagraph connectivity is unknown.  A reasonable next question would be:  what about a triangular subset of the triangular lattice, where all districts have size 3?  Is the corresponding metagraph connected?  We believe the answer is yes, though we may need to use significantly more compute power than even the massive amount we currently have access to.  

While \Cref{thm:main_result_triangular_lattice} shows that nice-enough dual graphs result in connected metagraphs, \Cref{thm:locked-6} reminds us that locked configurations still exist, even in nice graphs (nearly triangular graphs with degree no more than 8).  Our constructions for \Cref{thm:locked-6} are notable in that large portions of half of the districts look like the ``compact'' shape required by most states.  Specifically, in the limit, 1/4 of the nodes in our construction are in portions of the district that look ``compact.''  This is in contrast to the results in \cite{JTF_tilings}, where all locked tilings had non-compact shapes.  We are curious as to whether this ratio of 1/4 could be increased with a different configuration.

Our interpretation of \Cref{thm:locked-6} is that locked configurations can exist given ground rules that are sufficiently flexible. In \cite{JTF_tilings}, Tucker-Foltz needed the flexibility to assign population 1 to some nodes and population 2 to others in order to construct locked configurations. Similarly, \Cref{thm:locked-6} adds flexibility to the settings of \cite{Akitaya_et_al, cannon_irreducibility} by increasing the number of districts from three to six and, more importantly, by allowing the dual graph to be a bounded degree triangulation. With this extra freedom, the triangulation can be tuned just right for the construction of locked configurations.

We have found that this precise ``tuning'' is necessary for \Cref{thm:locked-6} to work.\footnote{See Step 3 in the proof of \Cref{thm:locked-6} for details.} This raises a natural question.  If we put some probability distribution on the space of possible dual graphs, can we say that locked configurations exist only with low probability (tending to zero as the number of nodes increases)? More speculatively, what can be said about the probability that the metagraph is connected?

Finally, we note that our \Cref{thm:3_by_n_rectangle} is the first positive result on the grid graph where the number of districts is fixed and the population of each district is not allowed to deviate.  It avoids the issues of intertwined districts by limiting ourselves to a narrow dual graph in which winding is impossible.  While we (and anecdotally many others) believe that the $3n \times 3n$ grid graph on 3 districts likely has a connected metagraph (after all, 2/3 of the map can change in one ReCom move!), we do not yet have any insights as to how one might deal with three districts winding around each other in the square lattice.

\bmhead{Acknowledgements}
We thank the anonymous referees for their insightful comments and suggestions, which improved the presentation of our results and helped us put them in appropriate context.  We also thank Lev Rappaport, who provided firsthand feedback on the accessibility of our images for colorblind readers.

\bibliography{Metagraph}

\begin{appendices}
\appendix 

\section{Invariants}\label{sec:invariants}

Chen et al explored the application of invariants in the context of the metagraph of redistricting maps \cite{ExploringMetagraphs}.  They explored these invariants specifically for the $6 \times 6$ grid graph, which was already known to have a disconnected metagraph \cite{JTF_tilings}.  These invariants are values which can be calculated for each tiling, and all tilings within a connected component of the metagraph must have the same corresponding set of values (hence the term ``invariants''; they are invariant under a ReCom move).  It is not guaranteed that two different connected components must have different invariant values, but if one can show that two different tilings have different invariant values, this automatically implies that they are in different connected components of the metagraph.

Here we define the invariants in question, and explore what they are capable and incapable of showing about tilings of the triangular lattice.  We use notation similar to the notation in \cite{ExploringMetagraphs}.

\subsection{Definitions for tiling invariants}

The following definition can be made for any 
set of tiles $\T$ on the hexagonal grid:

\begin{Definition}
    Let $\T$ be a set of tiles in the hexagonal grid in the plane, and let $R$ be a ring.  We define $R(\T)$ to be the $R$-module of formal linear combinations of tiles in $\T$, with coefficients in $R$.  
\end{Definition}

For our purposes, we will focus on dimer tiles (tiles of size two) and the cases where the ring in question is either $\Z$ or $\Z_2$.  There are three dimer tiles in the hexagonal grid; we label the northwest/southeast tiles $a$, the north/south tiles $b$, and the northeast/southwest tiles $c$, as in \Cref{fig:dominos}.   

\begin{figure}[h]
    \centering
    \includegraphics[width=0.5\linewidth]{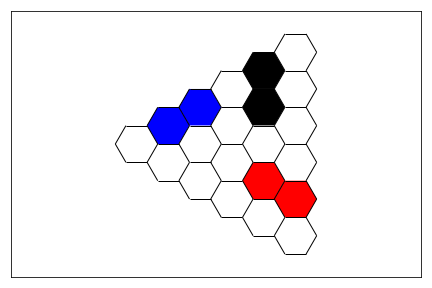}
    \caption{The red dimer is labeled $a$, black $b$, and blue $c$.}
    \label{fig:dominos}
\end{figure}

There are three different kinds of ReCom moves that can be made for the triangular lattice in this dimer setting; they can be visualized in \Cref{fig:recom_moves}.  

\begin{figure}[h]
    \centering
    \includegraphics[width=0.3\linewidth]{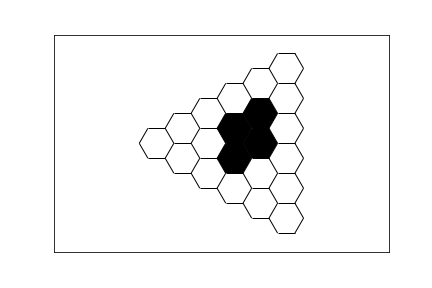}
    \includegraphics[width=0.3\linewidth]{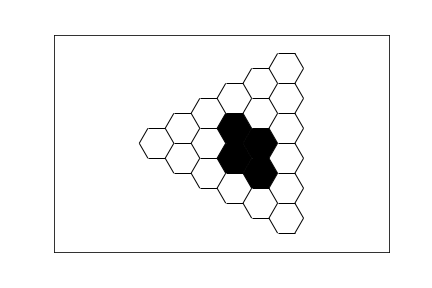}
    \includegraphics[width=0.3\linewidth]{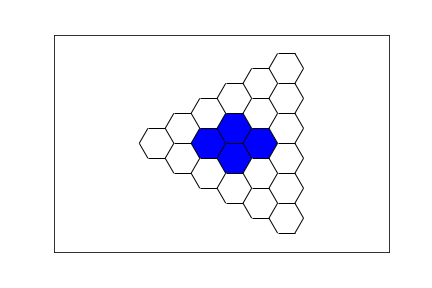}

    \includegraphics[width=0.3\linewidth]{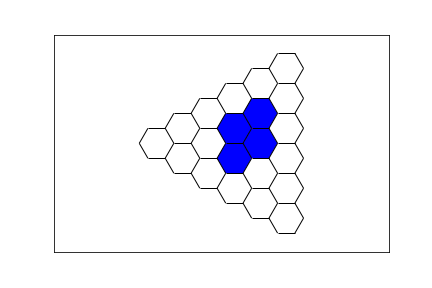}
    \includegraphics[width=0.3\linewidth]{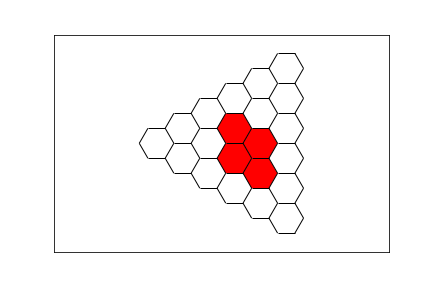}
    \includegraphics[width=0.3\linewidth]{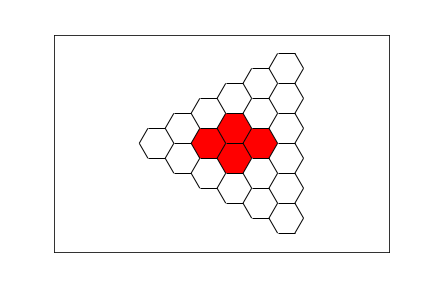}
    \caption{Each pair of tiles in the above images can be changed through a ReCom move to the pair of tiles below it.  The tiles are color-coded as in \Cref{fig:dominos}.}
    \label{fig:recom_moves}
\end{figure}

Again, the following definitions can be applied to any set of tiles $\T$.

\begin{Definition}
    Consider tiles $t_1$, $t_2$, $t_3$, and $t_4$, which are not required to be distinct.  Suppose tiles $t_1$ and $t_2$ can be adjacent to each other in the hexagonal grid in a way that their union is a connected region which can be re-split into tiles $t_3$ and $t_4$.  That is, tiles $t_1$ and $t_2$ can be combined and split into $t_3$ and $t_4$ (and vice versa) through a ReCom move.  Then we say that 
    \begin{equation*}
        t_1+t_2 - t_3 - t_4
    \end{equation*}
    is a ReCom combination of tiles or ReCom vector.  
\end{Definition}

Note that if $R$ is a ring which contains 1 and $-1$, then each ReCom combination of tiles is in the $R$-module $R(\T)$.  Finally, we have the following definition:

\begin{Definition}
    For a ring $R$ which contains $1$ and $-1$, we define $\I_{R}(\T)$ to be the set of vectors over $R$ whose dot product with every ReCom vector in $R(\T)$ is $0$.  In other words, suppose that the ReCom combination of tiles is written as $\sum_{t \in \T} \alpha(t)t$.  Then
    \begin{equation*}
        \I_{R}(\T) = \left\{f: \T \to R \Biggm\vert \sum_{t \in \T} f(t)\alpha(t) = 0  \; \text{ for all ReCom vectors } \; \sum_{t \in \T} \alpha(t)t\right\}
    \end{equation*}
\end{Definition}

Now suppose that $T$ and $T'$ are tilings using tiles in $\T$ of the same region $\mathcal{R}$ in the hexagonal grid, and that $T$ and $T'$ are in the same connected component of the tiling metagraph.  Let $v$ be the vector indexed by $\T$ whose value at index $t \in \T$ is the number of tiles of type $t$ in $T$, and let $v'$ be the corresponding vector for $T'$.   Every vector $u \in \I_{R}(\T)$ must satisfy
\begin{equation*}
    u \cdot v = u \cdot v'
\end{equation*}

Thus, we call these vectors $u \in \I_R(\T)$ \textbf{recombination invariants} (following the terminology of \cite{ExploringMetagraphs}).  Note that if two different indicator vectors of tilings of $\mathcal{R}$ using tiles in $\T$ have different values when dotted with some $u \in \I_R(\T)$, they \emph{must} be in different connected components of the tiling metagraph.  However, having the same such value when dotted with every $u \in \I_R(\T)$ does not guarantee that they are in the same connected component.

\subsection{Prior work on tiling invariants on redistricting metagraphs}

Chen et al \cite{ExploringMetagraphs} considered the recombination invariants $\I_{\mathbb{R}}(\T)$ when the underlying graph is the square grid, the region being tiled is the $6 \times 6$ square, and the set of tiles $\T$ are all trominos (connected tiles of size 3).  They showed that the number of NW ``chair'' trominos minus the number of SE chairs is invariant, as is the number of NE chairs minus the number of SW chairs.  They consider only the ring $\Z$ in their ring modules.  Since $\Z \subset \mathbb{R}$, the set of ReCom invariants can be found as a subset of the nullspace of the matrix whose rows are the vectors corresponding to ReCom combinations of tiles.  They show that the two invariants described above (NW chairs minus SE chairs, and NE chairs minus SW chairs) form a basis of that nullspace.  

Although there are \emph{many} different connected components of the tilings of the $6 \times 6$ grid with trominos (see \cite{JTF_tilings}), these invariants can\emph{not} distinguish between all of them.  For example, there are four connected components of size 384, each with different invariant values: $(3, 0)$, $(0, 3)$, $(-3, 0)$, and $(0, -3)$.  At the same time, of the 8 connected components of size 235: 2 components have invariant values $(3,0)$, 2 components have invariant values $(0,3)$, 2 components have invariant values $(-3,0)$, and 2 components have invariant values $(0,-3)$. Thus, there are different metagraph components with the same invariant values.

\subsection{Dimer tilings of the hexagonal grid}

The three unique dimers (tiles of size 2) in the hexagonal grid are shown in \Cref{fig:dominos}; let $\T$ be the set of those tiles.  We note $\I_\Z(\T)$ simply consists of multiples of the vector $(1, 1, 1)$, indicating that the total number of tiles in each tiling must be the same under a recombination move.  This doesn't tell us much, since the number of tiles must be the same whether the tilings are connected by a ReCom move or not.  

The basis of $\I_{\Z_2}(\T)$ is the vectors $(1, 0, 0)$, $(0, 1, 0)$, and $(0, 0, 1)$, indicating that the parity of the number of vectors of each type in each tiling must remain the same under a ReCom move.  This tells us something more, though not anything unexpected, since each ReCom move takes pairs of tiles of one type to pairs of tiles of another type (see \Cref{fig:recom_moves}). In fact, this parity invariant is even less impressive than it may appear, because it turns out that every region $\mathcal{R}$ of the hexagonal grid has only one ``parity class'' (choice of parities for the three tile types) to which all dimer tilings of $\mathcal{R}$ must belong. In other words, just like the total number of tiles, the parities are invariant among all tilings of $\mathcal{R}$ and not just those in the same connected metagraph component. 

In an effort to get more interesting invariants, we considered the parity of the coordinates of the ``base hexagon'' for each of our dimer tiles (see \Cref{fig:domino_centers} to see the base hexagons of each of the three tile types).  For each tile type $x \in \{a, b, c\}$, we let $x_i$ denote:
\begin{align*}
    i=1 &: &&  \text{ the base tile has (odd, odd) coordinates } \\
    i=2 &: &&  \text{ the base tile has (odd, even) coordinates } \\
    i=3 &: &&  \text{ the base tile has (even, odd) coordinates } \\
    i=4 &: &&  \text{ the base tile has (even, even) coordinates } \\
\end{align*}

\begin{figure}
    \centering
    \includegraphics[width=0.35\linewidth]{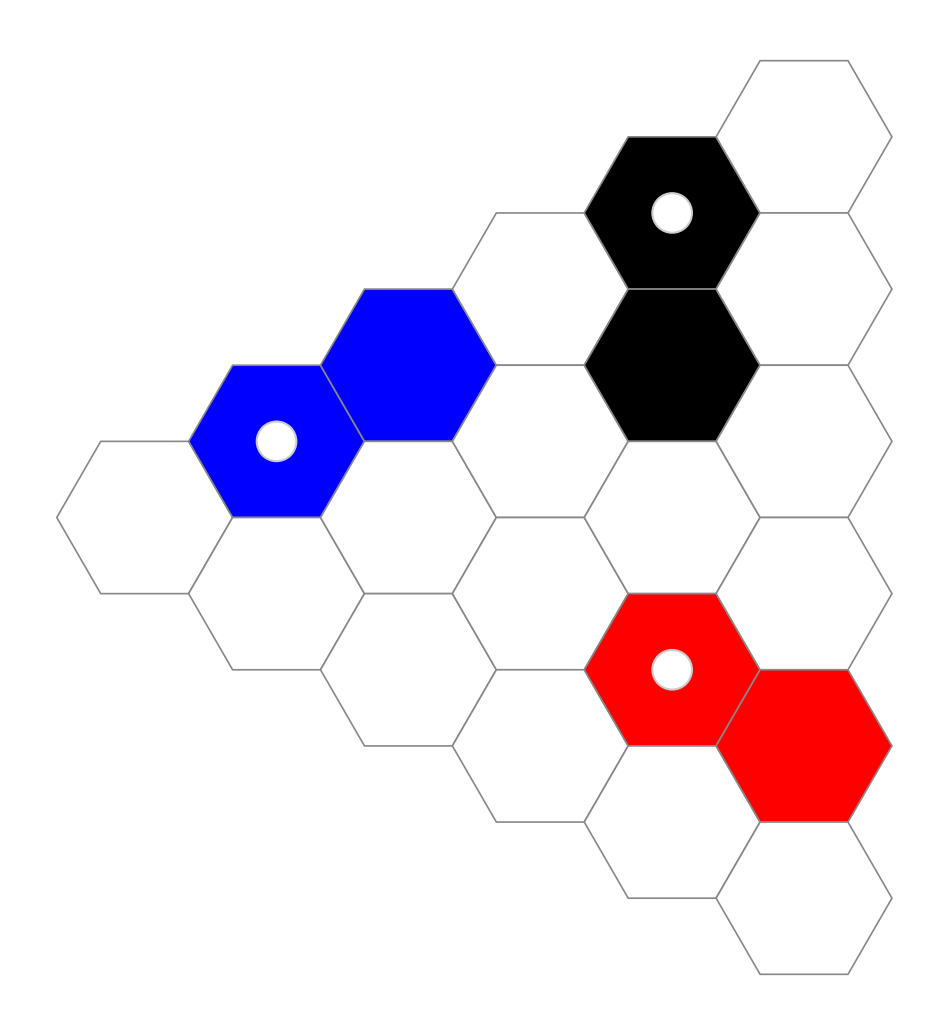}
    \caption{The white dot indicates the base hexagon for each of the three dimer tiles.}
    \label{fig:domino_centers}
\end{figure}

We denote by $\T_{\Z_2}$ the twelve tiles that result by distinguishing the parity of the base hexagon in tiles $a$, $b$, and $c$.   Using $\T_{\Z_2}$, our three ReCom combinations now expand into twelve combinations, depending on the parities of the combined tiles.  The space $\I_{\mathbb{R}}(\T_{\Z_2})$ now has dimension 4.  The 4 basis vectors for the nullspace of invariants can be described as follows:
\begin{enumerate}
    \item  The total number of tiles 
    \item  The number of $a$ tiles with even second coordinate minus the number of $a$ tiles with odd second coordinate
    \item The number of $b$ tiles whose coordinate sums are even minus the number of $b$ tiles whose coordinate sums are odd
    \item The number of $c$ tiles with even first coordinate minus the number of $c$ tiles with odd first coordinate
\end{enumerate}

We can also consider ReCom combinations over the field $\Z_2$ in order to find $\I_{\Z_2}(T_{\Z_2})$.  This results in a nullspace of dimension 5, with basis vectors of invariants described as follows:
\begin{enumerate}
    \item The parity of all tiles of type $a$
    \item The parity of all tiles of type $b$
    \item The parity of all tiles of type $c$
    \item The parity of the number of tiles of type $a$ with odd first coordinate plus the number of tiles of type $b$ with odd first coordinate
    \item The parity of the number of tiles of type $a$ with sum of coordinates odd, plus the parity of the number of tiles of type $b$ with sum of coordinates even, plus the parity of the number of tiles of type $c$ with odd first coordinate
\end{enumerate}

We conclude with a bit of speculation. Our overall goal is to answer the question: Given two tilings, is it possible to get from one to the other via ReCom moves? Suppose that we choose two tilings for which this is impossible. In that case, there ought to be some reason why it cannot be done. This could be a local obstruction in the metagraph, like if one of the tilings is a locked or partially locked configuration. Or, it could be a global obstruction in the metagraph, like if the two tilings have different invariant values. We might hope to prove a statement of the following form: Given two tilings, if neither one is locked or partially locked, and if they have the same invariant values, then they are in the same metagraph component. Indeed, this is precisely the form of the conjecture of \cite{moessner2001resonating} about the dimer tiling metagraph on a hexagonal grid with periodic boundary conditions. They conjecture that after removing the 12 locked configurations, the metagraph has four connected components given by parity invariants. As a cautionary note, the parity invariants of \cite{moessner2001resonating} do not seem to fit nicely into the framework of \cite{ExploringMetagraphs} that we have explored in this Appendix. This indicates that further development of the invariant theory is needed. 

\section{Algorithms used in computer search}
\label{sec:pseudocode}

\newcommand{\Shapes}{\mathcal{S}}
\newcommand{\Covers}{\mathcal{C}}
\newcommand{\Tilings}{\mathcal{T}}
\newcommand{\Moves}{\mathcal{M}}
\newcommand{\Part}{\mathcal{P}}
\newcommand{\set}[1]{\left\{#1\right\}}
\newcommand{\card}[1]{\left|#1\right|}
\newcommand{\true}{\textbf{true}}
\newcommand{\false}{\textbf{false}}

In this section, we present the algorithms we implemented for the computer search used in the proof of \Cref{thm:main_result_triangular_lattice}.  The core optimizations in our implementation are: pre-computing the ReCom moves for each pair of tile shapes, and parallelizing the search over potential covers of a region. For further optimizations in our implementation, we direct the readers to the source code, which is publicly available at \url{https://github.com/maemre/triforce}.
Throughout this section, $k$ refers to the tile size: although we use the algorithms only for tile size 2 in this work, they can work for larger tile sizes as well.

In our description of the algorithms, we distinguish \emph{tile shapes} which denote unique tile shapes independent of where a tile is placed, and \emph{tiles} which are placed on fixed coordinates in a region.

The core algorithm (\Cref{alg:parallel}) takes in:
\begin{itemize}
    \item a set of nodes $A$ that must be covered (the red nodes in \Cref{fig:inductive-gadget,fig:bottom-gadgets}),
    \item a set of nodes $B$ that \emph{may} appear in the cover (the union of red and blue nodes), and
    \item a partial tiling $\Part$ (the orange tiles) which we desire to exist in each connected component of each metagraph.
\end{itemize}
We define a \emph{cover of $A$} to be a set of nodes $R$ with $A \subseteq R \subseteq B$. \Cref{alg:parallel} checks that all covers $R$ of $A$ satisfy the following condition: In the metagraph of the tilings of $R$ by tiles of size $k$, each connected component contains a tiling that is an extension of $\Part$ (meaning a tiling that contains all the tiles in $\Part$).
The algorithm checks the metagraph for each cover in parallel. Each cover is checked by invoking \Cref{alg:check-one} for that cover.

\begin{algorithm}[htbp]
\caption{Check the whole set of covers in parallel}
\label{alg:parallel}
\begin{algorithmic}[1]
\Function{CheckAllCoversInParallel}{$A,B,k,\Part$}
    \Input $A$ is the region that must be tiled, $B$ is the set of nodes that can be tiled, $k$ is the tile size, $\Part$ is the desired partial tiling that must occur in each connected component of each metagraph.
    \State $\Covers \gets \set{A \cup D \mid D \subseteq B}$
    \State $\Shapes_k\gets$ pre-compute tile shapes of size $k$
    \State $\Moves_k\gets$ pre-compute potential ReCom moves for two tile shapes of size $k$
    \ForAll{$R \in \Covers$ \textbf{in parallel}}
        \If{$\neg \Call{CheckCover}{R,k,\Part,\Shapes_k,\Moves_k}$}
            \State \Return \false
        \EndIf
    \EndFor
    \State \Return \true
\EndFunction
\end{algorithmic}
\end{algorithm}

\Cref{alg:check-one} checks the metagraph condition described above by first building the set of all tilings of the region $R$ by tiles of size $k$, and then building the set of tilings of $R$ that can be reached by starting from an extension of the partial tiling $\Part$ and then applying a sequence of ReCom moves. The desired condition is that the latter set should be equal to the former, and not a proper subset.

In the algorithm, $V$ corresponds to the set of nodes of the metagraph (i.e., the set of all tilings of $R$), and $\mathit{covered}$ is the set of tilings reachable from an extension of $\Part$ by a sequence of ReCom moves.
The check at line~\ref{code:guard} identifies tilings we haven't seen yet ($T \notin covered$) that are an extension of $\Part$ ($\Part \subseteq T$). Given such a tiling $T$, the entire connected component of the metagraph that contains $T$ should be added to the set $\mathit{covered}$; this is accomplished in line~\ref{code:mark}.
\Cref{alg:check-one} does not use $k$ explicitly, but we pass $k$ along in the pseudocode for clarity as it determines the values of the arguments $\Shapes_k$ and $\Moves_k$.

The helper function $\textsc{EnumerateCompleteTilings}(R, \Shapes)$ iteratively builds the tilings for a region $R$ using the set of tile shapes $\Shapes$: it starts with an empty tiling, it generates new tilings by adding a tile to one of the existing partial tilings, and it repeats this process until reaching a fixpoint (i.e., until it stops generating tilings that are not previously generated).
At the end, it keeps only the complete tilings of $R$.

The helper function $\textsc{ReachableTilings}(T, V, \Moves_k)$ takes a tiling $T$, a set of possible tilings $V$, and a set of ReCom moves $\Moves_k$. It returns the subset of $V$ consisting of those tilings that are reachable from $T$ via a sequence of ReCom moves in $\Moves_k$, that is, the connected metagraph component containing $T$.  It computes this via depth-first-search in the metagraph induced by $V$ and $\Moves_k$.

\begin{algorithm}[H]
\caption{Check one cover by implicit metagraph traversal}
\label{alg:check-one}
\begin{algorithmic}[1]
\Function{CheckCover}{$R,k,\Part,\Shapes_k,\Moves_k$}
    \Input
    $R$ is the region, $k$ is the tile size, $\Part$ is the desired partial tiling, $\Shapes_k$ and $\Moves_k$ are the pre-computed tile shapes and ReCom moves for the given tile size.
    \State $V\gets\Call{EnumerateCompleteTilings}{R,\Shapes_k}$
    \If{$V=\varnothing$}
        \State \Return $\true$
    \EndIf
    \State $\mathit{covered}\gets\varnothing$
    \ForAll{$T\in V$}
        \If{$T\notin\mathit{covered}$ and
                $\Part \subseteq T$}
        \label{code:guard}
            \State $\mathit{covered}\gets\mathit{covered}\cup \Call{ReachableTilings}{T,V,\Moves_k}$
            \label{code:mark}
        \EndIf
    \EndFor
    \State \Return $\mathit{covered}=V$
\EndFunction
\end{algorithmic}
\end{algorithm}

\end{appendices}

\end{document}